\newcommand{\lars}[2]{{\color{blue}
		\smallskip \centerline{ \fbox{\vbox{\noindent Lars, #1: #2 }}} \smallskip }}
\title[Unlikely Intersections in Semiabelian Varieties]{Unlikely Intersections of Curves with Algebraic Subgroups in Semiabelian Varieties}
\newtheorem*{conjecture}{Conjecture}
\newtheorem{lem}{Lemma}[section]
\newtheorem{prop}{Proposition}[section]
\newtheorem{thm}{Theorem}[section]
\newcommand{\C}{\mathbb{C}}
\newcommand{\Q}{\mathbb{Q}}
\newcommand{\QQ}{\overline{\mathbb{Q}}}
\newcommand{\KK}{\mathbb{K}}
\newcommand{\Z}{\mathbb{Z}}
\newcommand{\R}{\mathbb{R}}
\newcommand{\cC}{C}
\newcommand{\Gal}{\mathrm{Gal}}
\newcommand{\codim}{\mathrm{codim}}
\newcommand{\pr}{\mathrm{pr}}
\newcommand{\commF}[2][]{\todo[#1,color=green]{F: #2}}
\newcommand{\ord}{\mathrm{ord}}
\DeclareMathOperator{\Lie}{\mathrm{Lie}}
\newcommand{\hhat}{\widehat{h}}
\newcommand{\Gm}{\mathbb{G}_m}
\subjclass[2010]{Primary 11G10; Secondary 03C64, 11G50, 14G40, 14K99}
\keywords{Unlikely intersections, Zilber-Pink Conjecture, semiabelian varieties, heights}
\author{Fabrizio Barroero}
\address{Dipartimento di Matematica e Fisica \\
	Università degli Studi Roma 3 \\
	L.go S. L. Murialdo 1 \\
	00146 Roma \\
	Italy}
\email{fbarroero@gmail.com}
\author{Lars K\"uhne}
\address{Institut for Matematiske Fag \\
	Universitetsparken 5 \\
	2100 København Ø \\
	Denmark\\
and 
Institut für Algebra, Zahlentheorie und Diskrete Mathematik \\ Leibnitz Universit\"at Hannover\\
Welfengarten 1\\
30167 Hannover\\
Germany}
\email{lk@math.ku.dk, 
	kuehne@math.uni-hannover.de}
\author{Harry Schmidt}
\address{Departement Mathematik und Informatik \\ 
	Universit\"at Basel\\
Spiegelgasse 1 \\
4051 Basel \\ 
Switzerland}
\email{harry.schmidt@unibas.ch}
\date{\today}
\begin{document}
	
\begin{abstract} 
	Let $G$ be a semiabelian variety and $\cC$ a curve in $G$ that is not contained in a proper algebraic subgroup of $G$. In this situation, conjectures of Pink and Zilber imply that there are at most finitely many points contained in the so-called unlikely intersections of $\cC$ with subgroups of codimension at least $2$. 
In this note, we establish this assertion for general semiabelian varieties over $\QQ$. This extends results of Maurin and Bombieri, Habegger, Masser, and Zannier in the toric case as well as Habegger and Pila in the abelian case.
\end{abstract}

\maketitle

\section{Introduction}

Let $G$ be a semiabelian variety defined over $\QQ$ and $X \subseteq G$ an algebraic subvariety. In this article, we study the intersections of $X$ with algebraic subgroups $H \subseteq G$ having codimension at least $\dim(X)+1$. For reasons of dimension, one does not expect that such a subgroup $H$ intersects $X$ at all. However, such ``unlikely intersections'' can appear and interesting phenomena arise when intersecting $X \subseteq G$ with the countable union $G^{[\dim(X)+1]}$ of {all} algebraic subgroups having codimension $\geq \dim(X)+1$. For example, it is not clear a priori whether the intersection $X \cap G^{[\dim(X)+1]}$ is Zariski-dense in $X$ or not. However, conjectures of Pink \cite{Pink2005a} and Zilber \cite{Zilber2002} imply the following.
\begin{conjecture}[Unlikely Intersection Conjecture, (UIC)]
If $X$ is not contained in a proper algebraic subgroup of $G$, then $X \cap G^{[\dim(X)+1]}$ is not Zariski-dense in $X$.
\end{conjecture}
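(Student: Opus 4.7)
The plan is to prove the conjecture in the case the paper addresses, namely $X = \cC$ a curve, in which case ``not Zariski-dense'' means ``finite''. I would follow the Bombieri--Masser--Zannier / Habegger--Pila template, adapting arguments from the toric (Maurin) and abelian (Habegger--Pila) cases to the semiabelian setting. Every $P \in \cC \cap G^{[2]}$ lies in an algebraic subgroup of codimension $\geq 2$, equivalently satisfies two independent ``group relations''. Parametrising these by surjective homomorphisms $\varphi\colon G \to G'$ with $\dim G' = 2$, one reduces to studying finitely many families of the form $\cC \cap \varphi^{-1}(S)$ where $S \subset G'$ is a small-dimensional special locus (e.g.\ a torsion coset in a suitable subquotient).

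The core ingredients are then the following. \textbf{(i)} A \emph{Bounded Height Theorem}: the set $\cC \cap G^{[2]}$ has bounded Néron--Tate height $\hhat$. This extends the theorems of Bombieri--Habegger--Masser--Zannier (toric case) and Habegger (abelian case) to semiabelian varieties and should be citable from the literature. \textbf{(ii)} A \emph{Galois orbit lower bound}: if $P \in \cC$ lies in an algebraic subgroup of codimension $2$ of minimal complexity $T$ (in terms of degrees of defining homomorphisms), then
\[
\#\bigl(\Gal(\QQ/K)\cdot P\bigr) \;\geq\; c\, T^{\delta}
\]
for positive constants $c,\delta$ independent of $P$, where $K$ is a fixed field of definition of $G$ and $\cC$. \textbf{(iii)} A \emph{counting step}: either via Pila--Wilkie applied to the uniformisation $\mathrm{Lie}(G)(\C) \to G(\C)$ restricted to a fundamental domain, which is definable in $\R_{\mathrm{an},\exp}$ thanks to Peterzil--Starchenko, intersected with the preimage of $\cC$; or via a direct, curve-specific counting argument in the spirit of Bombieri--Masser--Zannier. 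Combining (i)--(iii), an infinite family of points in $\cC \cap G^{[2]}$ would produce algebraic points on $\cC$ of bounded height but unbounded degree lying in subgroups of unbounded complexity, contradicting the counting estimate when read against the Galois lower bound.

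The main obstacle is step \textbf{(ii)}, the Galois orbit lower bound. The standard methods (Dobrowolski-style in the toric case, Masser and David--Philippon-style in the abelian case) give clean bounds on each factor separately, but extending them to a genuine semiabelian estimate is subtle: heights do not decouple cleanly along the defining extension $0 \to T \to G \to A \to 0$, the torsion of $G$ is more intricate than that of $T \times A$, and one must carefully relate the complexity of an algebraic subgroup of $G$ to the complexities of its images in, and intersections with, the toric and abelian parts. I expect this is where the bulk of the new work in the paper lies, built on top of citable semiabelian bounded-height results and a definability setup tailored to semiabelian uniformisations.
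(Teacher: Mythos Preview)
Your step \textbf{(i)} is the gap. The Bounded Height Theorem for semiabelian varieties (K\"uhne) bounds the height only on $V^{\mathrm{oa}}\cap G^{[\dim V]}$. For a curve this gives bounded height on $\cC^{\mathrm{oa}}\cap G^{[1]}$, which is vacuous when $\cC$ is anomalous, i.e.\ contained in a proper \emph{coset}. The hypothesis of the theorem only excludes $\cC$ from proper \emph{subgroups}, so $\cC$ may well be anomalous and ``$\cC\cap G^{[2]}$ has bounded height'' is not citable and in fact not known directly. The paper's entire Section~5 exists to get around this: it follows the Bombieri--Habegger--Masser--Zannier device of forming the auxiliary surface $S=\overline{\cC-\cC}\subseteq G_0$ (where $G_0$ is the smallest subgroup such that some coset of $G_0$ contains $\cC$), observing that $S$ is \emph{not} $G_0$-anomalous, and then showing that for $p\in \cC\cap G^{[2]}$ and $\sigma\in\Gal(\QQ/\KK)$ the differences $p^\sigma-p$ land in $S\cap G_0^{[2]}$. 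Bounded height is applied to $S$, not to $\cC$; a height comparison transfers the bound back to $p$; the points falling into the anomalous locus of $S$ are handled by the Structure Theorem and an induction on $\dim G$. You have none of this machinery in your plan.

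You have also misplaced the difficulty. Your steps \textbf{(ii)}+\textbf{(iii)} together are essentially the content of the paper's Proposition~\ref{lemfinbh} (finitely many points of bounded height in $\cC\cap G^{[2]}$), and the paper's proof of that is close in spirit to what you sketch: complexity bounds $\Delta(\langle p\rangle)\ll [\KK(p):\KK]^{c}$ under bounded height, definability of $\exp_G$ on a fundamental domain, Habegger--Pila's refined Pila--Wilkie, and Ax's theorem. But the ``Galois lower bound'' side is not where the new work lies; the paper obtains it rather directly from Gaudron's linear forms in logarithms on semiabelian varieties (Lemmas~\ref{lemma::arithmetic_complexity}--\ref{lemma::geometric_complexity}), not via Dobrowolski or David--Philippon style arguments. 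What you are missing, besides the surface construction above, is Proposition~\ref{lemfink}: the finiteness of $(\cC\cap G^{[2]})(\KK)$ for a fixed number field, needed to close the Galois-descent argument. In the semiabelian case this is nontrivial and the paper proves it by an induction on the toric rank using Mordell--Lang and a delicate local analysis of the $\lambda_{i,\nu}$.
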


One immediately realizes that, in the case $X$ is a hypersurface, this statement is exactly the Manin-Mumford conjecture which was proved by Laurent \cite{Laurent84}, Raynaud \cite{Raynaud83} and Hindry \cite{Hindry1988} in the multiplicative, abelian and general semiabelian case respectively.

Some special cases of the above conjecture have been already mentioned in a pioneering work of Bombieri, Masser, and Zannier \cite{Bombieri1999}, in which they proved that a curve $\cC$ in $G=\Gm^t$ has finite intersection with $G^{[2]}$ under the stronger assumption that $\cC$ is not contained in any translate of a proper algebraic subgroup of $G$. Maurin \cite{Maurin} gave a general proof with the weaker necessary assumption that $\cC$ is not contained in a proper algebraic subgroup, using the generalized Vojta inequality of Rémond \cite{Remond2005}. An alternative proof has been given by Bombieri, Habegger, Masser, and Zannier \cite{BHMZ}, relying on Habegger's proof of the Bounded Height Conjecture for algebraic tori \cite{Habegger2009}. 

After several partial results (\cite{RemVia}, \cite{Ratazzi08}, \cite{Carrizosa}, \cite{Viada2008}, \cite{galateau2010}) in this direction, Habegger and Pila \cite{HP} eventually proved (UIC) for curves in abelian varieties, using o-minimal counting techniques. 

The main purpose of this note is to establish the following generalization of the results in \cite{Maurin, HP}, which amounts to the full (UIC) in case $X$ is an algebraic curve.

\begin{thm}\label{mainthm}
	Let $G$ be a semiabelian variety and $\cC \subseteq G$ an irreducible curve not contained in a proper algebraic subgroup of $G$. Suppose $\cC$ and $G$ are defined over a number field $\KK$. Then $\cC \cap G^{[2]}$ is finite. 
\end{thm}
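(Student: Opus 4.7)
The plan is to combine the methods used in the toric case (Maurin, BHMZ) with those in the abelian case (Habegger--Pila) by leveraging the standard presentation $0 \to T \to G \to A \to 0$, where $T \cong \Gm^n$ is the toric part and $A$ is an abelian variety. Let $\pi \colon G \to A$ denote the projection. First I would dispose of the degenerate case in which $\pi(\cC)$ is a point: then $\cC$ lies in a fiber $\pi^{-1}(a) \cong T$, and after translation by a fixed $\KK$-point the statement reduces directly to Maurin's theorem on $\Gm^n$. So I may assume $\pi(\cC) = \cC'$ is an irreducible curve in $A$ and, since $\cC$ is not contained in a proper algebraic subgroup of $G$, that $\cC'$ is not contained in a proper algebraic subgroup of $A$.

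The core of the argument rests on two pillars. The first is a \emph{Bounded Height Theorem} for curves in semiabelian varieties: I would show there exists a finite union $\mathcal{Z} \subseteq \cC$ of special cosets such that every point $p \in (\cC \setminus \mathcal{Z}) \cap G^{[2]}$ satisfies $\hhat(p) \leq B$ for a constant $B = B(\cC)$; here $\hhat$ is a canonical height on $G$ combining the logarithmic Weil height on $T$ and the Néron--Tate height on $A$. To establish this I would parametrize the countable family of codimension-two subgroups by the integer vectors that define them, then show that outside finitely many degenerate directions the height of $p \in \cC$ lying in such a subgroup is controlled in terms of $\cC$ alone, using Habegger's bounded height result on $\Gm^n$ and its abelian analogue glued through the extension class in $\mathrm{Ext}(A, T)$.

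The second pillar is a Pila--Zannier style counting argument in the spirit of \cite{HP}. The universal cover $\mathrm{Lie}(G)$ of $G$ admits a fundamental domain that is semi-algebraic once one chooses coordinates encoding both the toric logarithm and the abelian period lattice. The preimage of $\cC \cap G^{[2]}$ in this fundamental domain is cut out by linear relations with integer coefficients and hence produces rational points on a set definable in $\R_{an,exp}$. The Pila--Wilkie counting theorem then provides a subpolynomial upper bound on the number of such points of bounded denominator. On the other hand, the bounded height obtained above, combined with Galois-equidistribution and lower bounds on the degrees of torsion/Kummer extensions over $\KK$, forces any $p \in \cC \cap G^{[2]}$ to have a Galois orbit growing at least linearly in the size of the defining integer vector. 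Comparing the two bounds yields finiteness.

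The principal obstacle is establishing the Bounded Height Theorem uniformly on $G$. The multiplicative and abelian heights scale differently under multiplication-by-$n$, and points in unlikely intersections can concentrate their height mass on either factor; controlling both simultaneously demands a careful analysis of the exact sequence and an adapted Lehmer-type inequality in the mixed setting. Once the height bound is secured, merging the o-minimal counting machinery with the semiabelian uniformization follows the Habegger--Pila blueprint fairly closely, though one must handle the non-compactness of $\mathrm{Lie}(G)$ by an explicit truncation step when invoking Pila--Wilkie.
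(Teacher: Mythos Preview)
Your second pillar---using o-minimal counting to show that $\cC\cap G^{[2]}$ contains only finitely many points of height $\leq B$---is essentially what the paper proves as Proposition~\ref{lemfinbh}, and your sketch (rational points on a definable set built from $\log_G(\cC)$, Pila--Wilkie versus a Galois lower bound coming from complexity estimates) is on target.

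The genuine gap is in your first pillar. The hypothesis forbids $\cC$ from lying in a proper algebraic \emph{subgroup}, but $\cC$ may still lie in a proper \emph{coset} $p_0+G_0$ with $G_0\subsetneq G$. In that case $\cC$ is itself $G$-anomalous, so $\cC^{\mathrm{oa}}=\varnothing$, and any Bounded Height Theorem applied directly to $\cC\subseteq G$ yields an empty statement: your exceptional set $\mathcal Z$ would have to be all of $\cC$. This is not a difficulty of the mixed toric/abelian structure (which is what you flag as the principal obstacle) but of the position of $\cC$ inside $G$, and it already occurs in the purely toric setting treated in \cite{BHMZ}. Note also that the Bounded Height Theorem for semiabelian varieties is not something to be assembled here by gluing the toric and abelian cases; it is a separate substantial result (Theorem~\ref{BHT}) that the paper imports as a black box.

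The paper circumvents the anomaly problem via the BHMZ device of an auxiliary surface: one passes to $S=\overline{\cC-\cC}\subseteq G_0$, which is never $G_0$-anomalous (Lemma~\ref{lemma::surface_nonanomalous}), and for $p\in\cC\cap G^{[2]}$ studies the Galois differences $r_{p,\sigma}=p^\sigma-p\in S\cap G_0^{[2]}$. Theorem~\ref{BHT} applied to $S\subseteq G_0$ bounds $h_L(r_{p,\sigma})$ whenever $r_{p,\sigma}\in S^{\mathrm{oa}}$, and a generically-finite height comparison (Proposition~\ref{lemma::heightbound}) transfers this back to $h_L(p)$, after which your second pillar applies. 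The finitely many anomalous curves $C_i\subset S$ supplied by the Structure Theorem~\ref{ST} are handled separately, and the residual case $H\supseteq H_i$ is disposed of by induction on $\dim G$ via the quotient $G\to G/H_i$. None of the difference surface, the Galois-conjugate trick, the Structure Theorem, or the inductive descent appears in your proposal, and the first two are indispensable precisely when $\cC$ sits in a proper coset.
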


If $C$ is contained in a proper algebraic subgroup $H$ of $G$, the intersection of $C$ with $G^{[2]}$ can be infinite. This is easy to see in the toric case $G = \Gm^t$. Passing to the component of $H$ containing $C$, which can be assumed to be $ \approx \Gm^{t^\prime}$ possibly after raising $C$ to an appropriate power, this reduces to the well-known fact that the intersection of a curve $C^\prime \subseteq \Gm^{t^\prime}$ with $(\Gm^{t^\prime})^{[1]}$ is infinite. Indeed, any non-constant rational function $f$ on $C^\prime$ attains infinitely many roots of unity as values, the preimages of which are elements of $C^\prime \cap (\Gm^{t^\prime})^{[1]}$ if $f$ is the restriction of a standard coordinate function on $\Gm^{t^\prime}$. In conclusion, our result is essentially optimal. 

We also note that the above theorem implies the Mordell-Lang conjecture for curves in semiabelian varieties (\cite[Theorem 5.3]{Pink2005}), yielding hence a special case of a well-known result of McQuillan \cite{McQuillan1995}. However, we make use of the Mordell-Lang conjecture -- in the proof of Lemma \ref{lemfink} -- for the intersection of curves in abelian varieties with their Mordell-Weil group. This is also a source of non-effectivity in our main result. 

Our main Theorem and most of the above-mentioned results concern semiabelian varieties defined over the algebraic numbers due to the arithmetical methods employed in the proofs. Of course, one might look at curves and semiabelian varieties defined over larger fields. In \cite{BMZ08}, the authors show that Maurin's Theorem holds for complex curves, while the first author, in collaboration with Dill, extended the work of Habegger and Pila to curves and abelian varieties over the complex numbers in \cite{BD}.  Finally, in \cite{BD20}, they extended our Theorem \ref{mainthm} to complex curves lying in semiabelian varieties over $\QQ$.

Our proof of Theorem \ref{mainthm} follows the strategy employed by Bombieri, Habegger, Masser and Zannier in \cite{BHMZ}, relying on the Bounded Height Conjecture proved for semiabelian varieties by the second author \cite{Kuehne2020}. 
However, new difficulties arise in its implementation for semiabelian varieties instead of algebraic tori. 

First, semiabelian varieties lack Poincaré reducibility, which is true for tori as well as for abelian varieties. This makes it often necessary to avoid its usage by other tools, mostly through auxiliary quotient constructions.

Second, we have to prove that the points in $C \cap G^{[2]}$ satisfy a Northcott property (i.e., there are only finitely many such points of height $\leq B$ for any constant $B$); see Proposition \ref{lemfinbh}. In the toric case $G = \Gm^t$, this was proven in \cite[Lemma 1]{Bombieri2006}, but as this approach does not seem to generalize to semiabelian varieties, we adapt instead a counting argument from the proof of (UIC) for curves in abelian varieties given by Habegger and Pila \cite{HP}. The proof of this counting argument involves linear forms in logarithms on semiabelian varieties \cite{Gaudron2005, Wuestholz1989} as well as Pila and Wilkie's o-minimal counting Theorem \cite{PW} and its refinement \cite{HP} and Ax's Theorem \cite{Ax}.

Third, technical difficulties related to semiabelian varieties appear throughout the argument. A rather unexpected complication comes from the mixed structure of semiabelian varieties. In order to establish a close relation between the algebraic degree of a subgroup $H \subseteq G$ and the covolume of the associated period lattice $\Omega_H$ in its $\mathbb R$-linear span $V_H$ (Lemma \ref{lemma::covolume_degree}), it is necessary to choose a specific metric on the Lie algebra of $G(\mathbb C)$.

Our result has applications in the context of Pell equations over polynomial rings. In fact, Masser and Zannier \cite{Masser2015} had the astonishing insight that there is a connection between (UIC) and the (local) solvability of families of such Pell equations over a base curve. As an example, let us consider the two equations
\begin{align}\label{pell2}
A^2 -((T^3 +1)X^4+3T^2X^3 +3TX^2 + X)B^2 = X-1
\end{align}
and
\begin{align}\label{pell1}
A^2 -(X-1/(1-T))^2((T^3 +1)X^4 +3T^2X^3 +3TX^2 + X)B^2 = X-1
\end{align}
where we consider $T$ as the coordinate on the base curve ${\mathbb A}_{\mathbb C}^1\setminus \{1\}$ and seek solutions $(A,B) \in \C[X]^2$ after the specialization $T = t$, $t \in \C\setminus \{1\}$. Consider the family of curves
\begin{align*}
\mathcal{C}: \ Y^2 = (T^3 +1)X^4 +3T^2X^3 +3TX^2 + X
\end{align*} 
over $U= \mathbb{A}_\mathbb{C}^1\setminus \{(-1)^{1/3},1\}$ and, for each $t \in U(\mathbb{C})$, the points
$$
P^{\pm}_t = \left(1, \pm\sqrt{(t+1)^3+1}\right)
$$
of $\mathcal{C}_t$. Denote by $\infty^\pm_t$ the two points at infinity of a projective non-singular model $\overline{\mathcal{C}}_t$ of $\mathcal{C}_t$.
 Following the argument in \cite[Section 9]{BC20}, it can be seen that we have a solution $(A,B) \in \C[X]^2$ of (\ref{pell2}, $T = t$) if and only if   $P_t= [P^+_t-\infty^-_t] $ is a multiple of  $Q_t = [\infty^+_t-\infty^-_t]$ on the Jacobian $\mathrm{Jac}(\overline{\mathcal{C}}_t)$. 
 Looking at the degree of $T$ in \eqref{pell2}, it is clear that the equation is not identically solvable and thus $P_t$ is not a multiple of $Q_t$ identically. On the other hand, one can show using Siegel’s Theorem for integral points on curves over function fields (see the end of Section 10 of \cite{BC20} or p.~68 of \cite{Zannier}) that there are infinitely many $t \in \C$ for which $P_t$ is a multiple of $Q_t$. Thus the equation (\ref{pell2}, $T = t$) is solvable for infinitely many $t \in \C$. 

In contrast to this, our Theorem \ref{mainthm} implies that there are at most finitely many $t \in \mathbb C$ such that the other equation (\ref{pell1}, $T=t$) admits a solution $(A,B) \in \C[X]^2$. However note that the polynomial in front of $B^2$ in (\ref{pell1}) is not square-free. This forces one to consider not just abelian varieties but also linear extensions of those. See \cite{bertrandpell} or \cite{schmidtpell} for a short description and some examples involving multiplicative as well as additive extensions. 
 The generic fiber of the family $\mathcal{C}$ is birationally equivalent to the elliptic curve
\begin{equation*}
	E: \ Y^2 = X^3 + 1,
\end{equation*}
inducing an isomorphism between the Jacobians of their projective non-singular models. 

We write $\varphi_T: \mathcal{C}\dashrightarrow E$ for the birational map over $\overline{\Q}(T)$. 
It is explicitly given by sending $(X,Y)$ to $(1/X + T, Y/X^2)$ and 
we first compute that 
$$\varphi_T\left(  \frac1{1-T}, \pm\frac{\sqrt{2}}{(1-T)^2} \right) = (1, \pm \sqrt{2}).$$
Note that $$\varphi_T(P_T^{\pm}) = \left(1 + T,\pm \sqrt{ (1 + T)^3 + 1}\right) \ \ \text{and} \ \ \varphi_T(\infty_T^{\pm}) = \left(T, \pm \sqrt{T^3 + 1}\right).$$
Write $\mathrm{Jac}_\mathfrak{m}(E)$ for the generalized Jacobian of $E$ with respect to the modulus $$\mathfrak{m} = (1,\sqrt{2}) + (1,-\sqrt{2}).$$ From Lutz-Nagell for number fields \cite[VIII, Exercise 8.11]{Silverman}, it  follows that $(1, \pm\sqrt{2})$ is not torsion on $E$. Consulting the references cited above, we conclude that  $\mathrm{Jac}_\mathfrak{m}(E)$ is isomorphic to a fixed non-split extension $G$ of $E$ by $\mathbb{G}_m$. We write 
$$\overline{P}_t = [\varphi_t(P_t^+) -\varphi_t(\infty_t^-)]_{\mathfrak m},\ \overline{Q}_t = [\varphi_t(\infty^+_t)-\varphi_t(\infty^-_t)]_{\mathfrak m} \in \mathrm{Jac}_\mathfrak{m}(E).$$ 
Now suppose we have a solution of (\ref{pell1}, $T = t$). Then we get a rational function $f$ on $E$ whose divisor is a linear combination of the divisors $[\varphi_t(P_t^+) -\varphi_t(\infty_t^-)]$ and $[\varphi_t(\infty^+_t)-\varphi_t(\infty^-_t)]$. Moreover $f(1, \sqrt{2}) = f(1,-\sqrt{2}) \neq 0$. This implies that $\overline{P}_t, \overline{Q}_t$ satisfy a linear relation over $\mathbb{Z}$. As $t$ varies over the points of the base curve $U$, the point $(\overline{P}_t, \overline{Q}_t)$ defines an irreducible curve $C$ in $G^2$ and each linear relation defines a subgroup of $G^2$ of codimension 2. Clearly, both $G$ and $C$ are defined over the algebraic numbers. 
In addition, we can check that the point $\varphi_{-2}(P^{\pm}_{-2}) = (-1,0)$ is torsion on $E$ while, again using Lutz-Nagell, one can check that $\varphi_{-2}(\infty_{-2}^{\pm})$ is not torsion. Thus $\overline{P_t}, \overline{Q}_t \in \mathrm{Jac}_{\mathfrak m}(E)$ satisfy no constant linear relation  and $C$ is not contained in a subgroup of $G^2$ of positive codimension. This reduces the finiteness assertion to our Theorem \ref{mainthm}.

Another application of this kind of results has recently appeared in \cite{BCMOS} and concerns multiplicative dependence of values of rational functions and linear dependence of points of elliptic curves after reduction modulo primes. There are three main results in \cite{BCMOS} respectively for $\Gm^t$, $E^g$ and $\Gm^t\times E^g$, where $E$ is an elliptic curve over $\Q$, and they use the appropriate special case of the statement of Theorem \ref{mainthm}. While for the first two cases, the desired results were already in the literature (\cite{Maurin}  and \cite{Viada2008}, \cite{galateau2010}), for the third the authors prove a weaker case of Theorem \ref{mainthm} where the projections of the curve on the multiplicative and the elliptic factors cannot both be contained in proper coset. In this way, the bounded height results contained in \cite{Bombieri1999} and \cite{Viada03} are sufficient and the use of the strategy of \cite{BHMZ} can be avoided. With the result of this article we can remove that superfluous hypothesis in Theorem 2.8 of \cite{BCMOS} and in its corollaries.

\section{Preliminaries}\label{preliminaries}

Throughout this section, $G$ is a semiabelian variety defined over a field $k$. Recall that this means that $G$ is a connected smooth algebraic $k$-group that is the extension 
\begin{equation*}
\begin{tikzcd} 
0 \ar[r] & T \ar[r] & G \ar[r] & A \ar[r] & 0
\end{tikzcd}
\end{equation*}
of an abelian variety $A$ of dimension $g$ by an algebraic torus $T$. We always assume that $T=\Gm^t$; such an identification can always be made if $k=\mathbb C$ or after base change to a finite extension of $k$. 

In this section we collect and prove several basic results that we are going to need later.

\subsection{The open anomalous locus} 
A coset in $G$ is the translate $H + p$ of a connected algebraic subgroup $H$ by a closed point $p$ of $G$. We usually write cosets of $G$ in the form $p+H$. Let $V$ be an irreducible subvariety of $G$. A positive-dimensional irreducible subvariety $W\subseteq V$ is called $G$-\emph{anomalous} (or simply \emph{anomalous}) in $V$ if it is contained in a coset $p+H$ with
\begin{equation*}
\dim V - \dim W  < \dim G -\dim H .
\end{equation*}
We let $V^{\mathrm{oa},G}$ (or simply $V^{\mathrm{oa}}$) to be the complement in $V$ of the union of all anomalous subvarieties of $V$.

The following theorem is a generalization of Theorem 1.4 of \cite{BMZ07}.

\begin{thm} [Structure Theorem]\label{ST}
	\label{structure}
	Let $V\subseteq G$ be an irreducible subvariety of positive dimension.
	\begin{enumerate}[label=(\alph*)]
	\item For any proper semiabelian subvariety $H$ of $G$, 
		the union $\mathscr{Z}_H$ of all subvarieties W of V contained in any coset of $H$ with
		\begin{equation}\label{st2}
		\dim W >  \max \{0, \dim V +\dim H - \dim G \}
		\end{equation} 
		is a closed subset of $V$, and the set $H + \mathscr{Z}_H$ is not dense in $G$.
		\item There is a finite collection $\varPhi_V$ of such proper semiabelian subvarieties $H$ such that every maximal anomalous subvariety $W$ of $V$ is a component of $V \cap (p+H)$ for some $H$ in $\varPhi_V$ satisfying 
		 \eqref{st2} and some $p$ in $\mathscr{Z}_H$; and $V^{oa}$ is obtained from $V$ by removing the $\mathscr{Z}_H$ for all $H$ in $\varPhi_V$. In particular $V^{oa}$ is open in $V$.
	\end{enumerate}
\end{thm}

\begin{proof}
	This is essentially contained in the proof of \cite[Corollary 2.4]{C-L} and follows from work of Kirby \cite{Kirby} and the Fiber Dimension Theorem (see, e.g., \cite[p.~8]{BMZ07}).
	
	Let us first prove part (a). Consider the projection $\psi:G\rightarrow G/H$ and its restriction $\psi_V:V\rightarrow \psi (V)$ to $V$. Then, $V \cap (p+H)= \psi_V^{-1}(\psi_V(p))$ for all $p \in V$. Now, the set $\mathscr{Z}_H$ consists of the fibers of $\psi_V$ of dimension
	$$> \max \{0,  \dim V -\dim(G/H) \}. $$
	 and is therefore closed in $V$ by the Fiber Dimension Theorem. The set $H + \mathscr{Z}_H$ is nothing but the union of all cosets of $H$ that give rise to an anomalous component in $V$. We have $\psi(H + \mathscr{Z}_H)= \psi(\mathscr{Z}_H)\subseteq \psi(V)$. If $H + \mathscr{Z}_H$ were dense in $G$, then $\psi( \mathscr{Z}_H)$ would be dense in $G/H$. This is only possible if $\psi(V)$ is dense in $G/H$. This implies that $$\dim (G/H)=\dim G-\dim H= \dim \psi(V) \leq \dim V  .$$ But now the Fiber Dimension Theorem gives an open dense $U \subseteq \psi(V)$ whose points $q$ satisfy $$\dim(\psi^{-1}(q))=\dim V -\dim \psi(V)=\dim V -\dim (G/H)\geq 0.$$ The set $U$ must then be disjoint from $\psi(\mathscr{Z}_H)$ and therefore the latter cannot be dense in $G/H$.
	
	We now turn to part (b). We recall \cite[Theorem 2.3]{C-L}, which is a corollary of the theorem on \cite[p.~449]{Kirby}. If V is an irreducible subvariety of $G$ then there exists a finite family $\varPhi_V$ of proper semiabelian subvarieties such that for all cosets $p+K$ of a semiabelian subvariety $K$ and all anomalous components $W$ of $V\cap (p+K)$ there exists an $H\in \varPhi_V$ and $q\in G$ so that $W\subseteq q+H$ and
	\begin{equation}\label{eqdim}
	\dim H+\dim W =\dim K +\dim W'.
	\end{equation}
	where $W'$ is the irreducible component of $V\cap(q+H)$ containing $W$.
	Now, let $W$ be a maximal anomalous component arising from an intersection $V\cap (p+K)$. We can suppose that $K$ is the smallest semiabelian subvariety so that $W\subseteq p+K$. We will show that $K \in \varPhi_V$, where $\varPhi_V$ is as in the above statement. The statement actually gives an $H\in \varPhi_V$ with \eqref{eqdim}. We show that $K=H$. By assumption we have $K\subseteq H$. Let $W'$ be the irreducible component of $V\cap (q+H)$ containing $W$ and suppose $H\neq K$ so $W\subsetneq W'$. By the maximality of $W$, $W'$ cannot be anomalous and we have 
	$$
	\dim W' \leq \dim V + \dim H- \dim G.
	$$
	So, by \eqref{eqdim}, we have
	$$
	\dim W\leq \dim V +\dim K -\dim G,
	$$
	which contradicts the fact that $W$ was an atypical component of the intersection $V\cap (p+K)$.
\end{proof}

\subsection{Degrees of subgroups and periods}
\label{subsection::degrees}

In this subsection, we consider a semiabelian variety $G$ defined over $\mathbb{C}$. We identify smooth subvarieties $X \subseteq G$ with the complex analytic manifolds associated to them through analytification \cite{Serre1955--1956}. 

For the degree computations in this subsection, it is convenient to work with an explicit compactification $\overline{G}_0$ of $G$ and a specific ample line bundle $L_0$ on $\overline{G}_0$. Nevertheless, the results obtained are independent of this specific choice. Let us consider the compactification $\overline{G}_0$ of $G$, the maps $\overline{[n]}: \overline{G}_0 \rightarrow \overline{G}_0$ and $\overline{\pi}: \overline{G}_0 \rightarrow A$, and the line bundle $M=M_{\overline{G}_0}$ from \cite[Construction 5]{Kuehne2020}. Furthermore, we choose a very ample symmetric line bundle $N$ on the abelian quotient $A$ of $G$. We then choose the line bundle $L_0 = M \otimes \overline{\pi}^{\ast} N$ on $\overline{G}_0$. Indeed, this is an ample line bundle by \cite[Lemma 3]{Kuehne2020}. As $\overline{[2]}^\ast M = M^{\otimes 2}$ and $\overline{[2]}^\ast N = N^{\otimes 4}$, we can endow both $M$ and $N$ with canonical hermitian metrics $h_M$ and $h_N$ by using \cite[Theorem 2.2]{Zhang1995a}. These metrics are unique up to a non-zero scalar.

Let $\exp_{G}: \mathrm{Lie}(G) \rightarrow G$ be the (complex) Lie group exponential of $G$ and write $\Omega_G = \exp_G^{-1}(\{0_G\})$ for the periods of $G$. It is well known that $\Omega_G$ is a discrete subgroup of $\mathrm{Lie}(G)$ of rank $2g+t$ and we write $V_G \subseteq \mathrm{Lie}(G)$ for its $\mathbb{R}$-linear span, which coincides with the preimage of the maximal compact subgroup $K_G \subseteq G$ under $\exp_G$; we use analogous notations for other semiabelian varieties. 

The exact sequence
\begin{equation*}
\begin{tikzcd} 
0 \ar[r] & \Gm^t \ar[r] & G \ar[r] & A \ar[r] & 0
\end{tikzcd}
\end{equation*}
induces an exact sequence
\begin{equation*}
\begin{tikzcd} 
0 \ar[r] & \mathrm{Lie}(\Gm^t) \ar[r] &  \mathrm{Lie}(G) \ar[r] & \mathrm{Lie}(A) \ar[r] & 0
\end{tikzcd}
\end{equation*}
restricting to a sequence
\begin{equation*}
\begin{tikzcd} 
0 \ar[r] & \Omega_{\Gm^t} \ar[r] & \Omega_G \ar[r] & \Omega_A \ar[r] & 0
\end{tikzcd}
\end{equation*}
between the respective period lattices. Finally, we obtain an exact sequence
\begin{equation}
\label{equation::seq_tosplit}
\begin{tikzcd} 
0 \ar[r] &  V_{\Gm^t} \ar[r] & V_G \ar[r] & V_A \ar[r] & 0
\end{tikzcd}
\end{equation}
of $\mathbb{R}$-vector spaces.

Starting from the $(1,1)$-form $c_1(N,h_N)$, we can define a symmetric, positive definite, $\mathbb{R}$-bilinear form on $V_A$; in fact, the pullback $\exp_A^{\ast}c_1(N,h_N)$ is an invariant positive definite $(1,1)$-form on the $\mathbb{C}$-vector space $\mathrm{Lie}(A)=V_A$. We can hence use the one-to-one correspondence between Hermitian forms and symmetric $\mathbb{R}$-bilinear forms (see e.g.\ \cite[Section 4]{Kuehne2020}). Write $g_{\mathrm{ab}}$ for the invariant Riemannian metric obtained on $V_A$ in this way. We also describe an invariant Riemannian metric $g_{\mathrm{tor}}$ on $V_{\Gm^t}$ as follows: The standard product decomposition $\Gm^t = \Gm \times \cdots \times \Gm$ gives rise to a product decomposition 
\begin{equation*}
\Omega_{\Gm^t} = (2\pi i)\Z \times \cdots \times (2\pi i)\Z \subset \mathrm{Lie}(\Gm)^t = \mathrm{Lie}(\Gm^t)
\end{equation*}
of the period lattice. We let $g_{\mathrm{tor}}$ be the unique invariant Riemannian metric on $V_{\Gm^t}$ such that
\begin{equation*}
\left(2\pi i,0,\dots,0 \right), \left(0,2\pi i,\dots,0 \right), \dots, \left(0,0,\dots, 2\pi i \right), 
\end{equation*}
form a $g_{\mathrm{tor}}$-orthonormal basis of $V_{\Gm^t}$.

In order to obtain a metric $g_G$ on $V_G$ from $g_{\mathrm{ab}}$ and $g_{\mathrm{tor}}$, we next describe a canonical splitting $\sigma: V_G \rightarrow V_{\mathbb{G}_m^t}$ of \eqref{equation::seq_tosplit}. Consider the additive homomorphisms $\lambda_1,\dots,\lambda_t: G(\mathbb{C}) \rightarrow \mathbb{R}$ afforded by \cite[Lemma 14]{Kuehne2020}. We recall also that
\begin{equation*}
\{ x \in G(\mathbb{C}) : \lambda_1(x)=\lambda_2(x)= \cdots = \lambda_{t}(x) = 0 \} 
\end{equation*}
coincides with the maximal compact subgroup $K_G$ of $G(\mathbb C)$. In addition, the functions $\lambda_1,\dots,\lambda_t$ satisfy a certain functoriality: Let $\varphi: H \rightarrow G$ be a homomorphism of semiabelian varieties inducing a homomorphism $\varphi_{\mathrm{tor}}: \Gm^{t^\prime} \rightarrow \Gm^t$ of their maximal tori. Write $X_i$ (resp.\ $Y_j$) for the standard algebraic coordinates on $\Gm^{t}$ (resp.\ $\Gm^{t^\prime}$) so that we have $\varphi_{\mathrm{tor}}^\ast(X_u)=Y_1^{a_{ u1}}\cdots Y_{t^\prime}^{a_{u t^\prime }}$ with integers $a_{uv}$ ($1 \leq u \leq t$, $1 \leq v \leq t^\prime$). Let $\lambda^\prime_1,\dots,\lambda^\prime_{t^\prime}: H(\mathbb{C}) \rightarrow \mathbb{R}$ be given by invoking \cite[Lemma 14]{Kuehne2020} for $H$. On $H(\mathbb{C})$ there is then an identity
\begin{equation} \label{equation::lambda_functoriality}
\lambda_u \circ \varphi =  a_{ u1} \lambda_1^\prime + a_{ u2} \lambda_2^\prime  + \cdots + a_{ut^\prime } \lambda^\prime_{t^\prime};
\end{equation}
all these facts can be found with proofs in \cite[Section 5.1]{Kuehne2020}.

The compositions $\lambda_j \circ \exp_G: \mathrm{Lie}(G) \rightarrow \mathbb{R}$, $j=1,\dots,t$, are $\mathbb{R}$-linear and their common zero locus is precisely $\mathrm{Lie}(K_G)$. As $G$ is a complex Lie group, multiplication-by-$i$ ($i = \sqrt{-1})$ induces an $\mathbb{R}$-linear map $I:\mathrm{Lie}(G) \rightarrow \mathrm{Lie}(G)$. (Note that $I$ does not preserve $\mathrm{Lie}(K_G)$.) For each $u \in \{1,\dots,t\}$, we set
\begin{equation*}
	\kappa_u = (-i \lambda_u)\circ \exp_G \circ \, I \! : V_G \longrightarrow (2\pi i)\mathbb{R}.
\end{equation*}
\begin{lem}
\label{lemma::splitting}
Identifying $(2\pi i \mathbb{R})^t$ with $V_{\Gm^t}$ in the obvious way, the $\mathbb{R}$-linear map
\begin{equation}
\label{equation::splitting}
\sigma = \kappa_1 \times \cdots \times \kappa_t: V_G \longrightarrow ((2\pi i)\mathbb{R})^t = V_{\Gm^t}
\end{equation}
splits the exact sequence \eqref{equation::seq_tosplit} on the left. 

This splitting is compatible with passing to subgroups: If $H \subseteq G$ is an algebraic subgroup with maximal torus $T \subseteq \Gm^t$, then we have $\sigma(V_H) \subseteq V_T$ in $V_{\Gm^t}$.
\end{lem}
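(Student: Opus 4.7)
The plan is to attack the two assertions separately, with the functoriality identity \eqref{equation::lambda_functoriality} being the main technical tool in both cases.

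For the splitting property, I would apply \eqref{equation::lambda_functoriality} to the inclusion $\iota : \Gm^t \hookrightarrow G$. Since $\iota_{\mathrm{tor}} = \mathrm{id}_{\Gm^t}$, the exponent matrix is the identity, so the relation reduces to $\lambda_u \circ \iota = \lambda'_u$, where $\lambda'_u$ is the $u$-th $\lambda$-function attached by \cite[Lemma 14]{Kuehne2020} to the split torus $\Gm^t$ viewed as a semiabelian variety in its own right; this is the normalized absolute-value logarithm on the $u$-th factor. Given $v = (2\pi i \alpha_1, \dots, 2\pi i \alpha_t) \in V_{\Gm^t} \subseteq V_G$ with $\alpha_u \in \R$, the computation $I(v) = (-2\pi\alpha_1, \dots, -2\pi\alpha_t)$ followed by $\exp_{\Gm^t}$ and $\lambda'_u = \log|\cdot|_u$ gives $\lambda_u(\exp_G(I(v))) = -2\pi\alpha_u$, and hence $\kappa_u(v) = 2\pi i \alpha_u$. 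This is precisely the $u$-th coordinate of $v$ under the identification $V_{\Gm^t} = (2\pi i \R)^t$, so $\sigma$ restricts to the identity on $V_{\Gm^t}$.

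For the compatibility with subgroups, let $H \subseteq G$ be an algebraic subgroup with maximal torus $T$. I would choose an identification $T \cong \Gm^{t'}$ (making $H$ a split semiabelian variety) and let $\lambda'_1, \dots, \lambda'_{t'}$ denote the corresponding functions on $H(\C)$, with associated $\kappa'_v$ and splitting $\sigma' : V_H \to V_{\Gm^{t'}}$. Applying \eqref{equation::lambda_functoriality} now to the inclusion $\varphi : H \hookrightarrow G$ yields $\lambda_u \circ \varphi = \sum_{v=1}^{t'} a_{vu} \lambda'_v$, where $(a_{vu})$ is the integer matrix describing the induced torus embedding $\varphi_{\mathrm{tor}} : \Gm^{t'} \hookrightarrow \Gm^t$. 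Composing with $I$ and $\exp$ and multiplying by $-i$, this translates into $\kappa_u|_{V_H} = \sum_v a_{vu} \kappa'_v$; equivalently, $\sigma|_{V_H} = d\varphi_{\mathrm{tor}} \circ \sigma'$ on $V_H$. Since $d\varphi_{\mathrm{tor}} : V_{\Gm^{t'}} \to V_{\Gm^t}$ has image exactly $V_T$, the inclusion $\sigma(V_H) \subseteq V_T$ follows.

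The hard part is bookkeeping rather than conceptual: one must verify that the normalization of the $\lambda_u$ coming from \cite[Lemma 14]{Kuehne2020} is consistent with the ``obvious'' identification $V_{\Gm^t} = (2\pi i \R)^t$ used in the statement of $\sigma$, and that $d\varphi_{\mathrm{tor}}$ acts on coordinates by $(z_v)_v \mapsto (\sum_v a_{vu} z_v)_u$ in agreement with the exponent matrix of $\varphi_{\mathrm{tor}}$. Once these conventions are pinned down, both assertions reduce to direct unwindings of \eqref{equation::lambda_functoriality}.
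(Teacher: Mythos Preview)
Your proof is correct and takes essentially the same approach as the paper, with the functoriality identity \eqref{equation::lambda_functoriality} driving both assertions and the explicit computation $\kappa_1(2\pi i r)=2\pi i r$ for $\Gm$ settling the first. The only cosmetic difference is in the second part: you parametrize $T$ via an identification $T\cong\Gm^{t'}$ and factor $\sigma|_{V_H}$ through the intrinsic splitting $\sigma'$ of $H$, whereas the paper works dually with the monomials $X_1^{a_1}\cdots X_t^{a_t}$ cutting out $T$ in $\Gm^t$ and checks via \eqref{equation::lambda_functoriality} that the corresponding linear relations $\sum_u a_u\kappa_u=0$ hold on $V_H$.
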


\begin{proof}
Writing $\iota: V_{\Gm^t} \hookrightarrow V_{G}$ for the inclusion from \eqref{equation::seq_tosplit}, we have to show that $\sigma \circ \iota$ is the identity on $V_{\Gm^t}$. By functoriality \eqref{equation::lambda_functoriality}, we can reduce to the case $G=\Gm^t$ for this. We can furthermore restrict to the case $t=1$ because of the product structure. In this case, we have $\lambda_1(z) = \log |z|$ where $z$ is the standard complex coordinate on $\Gm$ (compare again \cite[Section 5.1]{Kuehne2020}). In addition, $\exp_G$ is just the ordinary exponential function. For each real number $r$, we have thus
\begin{equation*}
	\kappa_1 (2\pi i \cdot r) = -i \log |\exp(-2\pi r)|= 2 \pi i  \cdot r,
\end{equation*}
which completes the proof of the first assertion.

The second assertion follows from functoriality \eqref{equation::lambda_functoriality}: Let $X_1,\dots,X_t$ be the standard algebraic coordinates on $\Gm^t$. If $X_1^{a_1}\cdots X_t^{a_t}-1$ vanishes on the subtorus $T \subseteq \Gm^t$, \eqref{equation::lambda_functoriality} guarantees that $a_1\lambda_1 + \dots + a_t\lambda_t$ vanishes on $H(\mathbb{C})$. This yields a corresponding linear relation on the image of $\sigma|_H$. Varying the binomial $X_1^{a_1} \cdots X_t^{a_t}-1$ vanishing on $T$, the subspace $V_T$ is precisely cut out by these linear relations. Thus we deduce $\sigma(V_H) \subseteq V_T$.
\end{proof}

The splitting \eqref{equation::splitting} induces an $\mathbb{R}$-linear isomorphism $V_G = V_{\Gm^t} \times V_A$. Using this isomorphism, we obtain a Riemannian metric $g_G = g_{\mathrm{tor}} \times g_{\mathrm{ab}}$ on $V_G$. This allows us to consider the covolume $\mathrm{covol}_{g_G}(\Omega \subset V)$ of a discrete subgroup $\Omega$ of $V_G$ inside its $\R$-span $V$.

In what follows, for a semiabelian subvariety $H$ of $G$, $\overline{H}$ indicates the closure of $H$ in $\overline{G}$. Furthermore, we fix an arbitrary ample line bundle $L$ on $\overline{G}$ once and for all.

\begin{lem} 
\label{lemma::covolume_degree}	
Let $H \subseteq G$ be a semiabelian subvariety with maximal torus $T \subseteq \Gm^t$ and maximal abelian quotient $B \subseteq A$. Then,
\begin{equation*}
c_1 \deg_L(\overline{H})\leq \mathrm{covol}_{g_G}(\Omega_H \subset V_H) \leq c_2 \deg_L(\overline{H})
\end{equation*}
for constants $c_1,c_2 >0$ that depend only on $\dim(G)$ and $L$.
\end{lem}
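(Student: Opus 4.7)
The plan is to separate both sides into a toric contribution and an abelian contribution, corresponding to the decomposition $0 \to T \to H \to B \to 0$, match each contribution individually, and then combine using the orthogonality afforded by Lemma~\ref{lemma::splitting}.

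First, I would restrict the splitting $\sigma$ of \eqref{equation::seq_tosplit} to $V_H$. By Lemma~\ref{lemma::splitting} we have $\sigma(V_H) = V_T$, and since $V_{\Gm^t}$ and $V_A$ are orthogonal in $V_G$, this gives an orthogonal decomposition $V_H = V_T \oplus V_B$ with $g_G|_{V_H} = g_{\mathrm{tor}}|_{V_T} \oplus g_{\mathrm{ab}}|_{V_B}$ (where $V_B$ is identified with the $\sigma$-kernel of $V_H$). Choosing a $\Z$-basis of $\Omega_H$ that extends a basis of $\Omega_T$ and whose remaining elements project to a basis of $\Omega_B$, the resulting Gram matrix in an adapted orthonormal basis is block triangular, yielding
\[\mathrm{covol}_{g_G}(\Omega_H \subset V_H) = \mathrm{covol}_{g_{\mathrm{tor}}}(\Omega_T \subset V_T) \cdot \mathrm{covol}_{g_{\mathrm{ab}}}(\Omega_B \subset V_B).\]

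Next I would handle the degree. Since degrees of $\overline{H}$ taken with respect to different ample line bundles on $\overline{G}$ differ only up to factors depending on $\dim G$, it suffices to work with $L_0 = M \otimes \overline{\pi}^{\ast} N$. Expanding $c_1(L_0)^{\dim H}$ on $[\overline{H}]$ by the binomial formula and using the vanishing of $c_1(\overline{\pi}^{\ast} N)^k$ for $k > \dim B$, together with the projection formula along $\overline{\pi}|_{\overline{H}}: \overline{H} \to B$, one shows that $\deg_{L_0}(\overline{H})$ is sandwiched between positive multiples of
\[\deg_{M_{\mathrm{fib}}}(\overline{T}) \cdot \deg_N(B)\]
(the main term corresponding to $c_1(M)^{\dim T} \cdot c_1(\overline{\pi}^{\ast} N)^{\dim B}$), where $\overline{T}$ is the closure of $T$ inside a fiber of $\overline{\pi}$ and $M_{\mathrm{fib}}$ is the restriction of $M$; the sandwiching constants depend only on $\dim G$, and ample positivity of $M$ and $N$ is used to bound the cross terms.

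Third, I would match each of the two factors with its covolume counterpart via Kähler geometry. For the abelian side, the form $c_1(N, h_N)^{\dim B}/(\dim B)!$ is precisely the Riemannian volume form of $g_{\mathrm{ab}}$ on $B$, so integration over $B$ yields
\[\deg_N(B) = (\dim B)! \cdot \mathrm{covol}_{g_{\mathrm{ab}}}(\Omega_B \subset V_B).\]
For the toric side, the metric $g_{\mathrm{tor}}$ is normalized so that $(2\pi i)e_1,\dots,(2\pi i)e_t$ forms an orthonormal basis of $V_{\Gm^t}$; after identifying a fiber of $\overline{\pi}$ with $(\mathbb{P}^1)^t$ and $M_{\mathrm{fib}}$ with an explicit toric line bundle, the same Kähler-volume computation adapted to the Fubini--Study-type form underlying $M_{\mathrm{fib}}$ gives
\[\deg_{M_{\mathrm{fib}}}(\overline{T}) = c_T \cdot \mathrm{covol}_{g_{\mathrm{tor}}}(\Omega_T \subset V_T)\]
with $c_T > 0$ depending only on $t$. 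Combining these two identities with the covolume factorization from the first step produces the claimed sandwich inequalities.

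The main obstacle is the intersection-theoretic factorization in the second step: the cross terms $c_1(M)^{\dim T + j} \cdot c_1(\overline{\pi}^{\ast} N)^{\dim B - j} \cap [\overline{H}]$ with $j \geq 1$ do not vanish in general, and bounding them between dimension-dependent multiples of the main ($j = 0$) term requires the ample positivity of both $M$ and $N$ as well as the explicit structure of Kühne's compactification in \cite[Construction~5]{Kuehne2020}. A secondary subtlety is the toric-side matching, since $\overline{T}$ is not canonically a toric variety and the constant $c_T$ has to be shown independent of the chosen coordinate identification of the fiber with $(\mathbb{P}^1)^t$.
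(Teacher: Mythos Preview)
Your overall architecture---factor the covolume via the splitting, factor the degree, and match the toric and abelian pieces separately---is exactly what the paper does, and your treatment of the covolume factorization and the abelian equality $(\dim B)!\,\mathrm{covol}_{g_{\mathrm{ab}}}(\Omega_B\subset V_B)=\deg_N(B)$ agrees with the paper's.

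Two points, however, diverge from the paper, one harmlessly and one fatally.

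\textbf{Degree factorization.} You treat the cross terms $c_1(M)^{\dim T+j}\cdot c_1(\overline\pi^\ast N)^{\dim B-j}\cap[\overline H]$ with $j\geq 1$ as a genuine obstacle to be sandwiched. In fact they vanish numerically: the extension class of $G$ lies in $\mathrm{Pic}^0(A)^t$, so the projective-bundle relations expressing $c_1(M_i)^2$ involve only $\overline\pi^\ast c_1(\mathcal L_i)$ with $\mathcal L_i\in\mathrm{Pic}^0(A)$, which is numerically trivial. Consequently $\overline G_0$ behaves for degree purposes like the split product $(\mathbb P^1)^t\times A$, and a direct projection-formula computation gives $\deg_{L_0}(\overline H)=\binom{\dim H}{\dim T}\deg_M(\overline T)\deg_N(B)$. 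The paper simply records this (absorbing the binomial coefficient into the constants); no sandwich is needed.

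\textbf{Toric matching.} Here there is a real gap. You claim an identity $\deg_{M_{\mathrm{fib}}}(\overline T)=c_T\cdot\mathrm{covol}_{g_{\mathrm{tor}}}(\Omega_T\subset V_T)$ with $c_T$ depending only on $t$, obtained by ``the same K\"ahler-volume computation''. No such identity exists: the paper itself notes, immediately after the lemma, that for the subtorus of $\Gm^2$ cut out by $X_1^{a_1}X_2^{a_2}=1$ one has $\deg=2(|a_1|+|a_2|)$ while $\mathrm{covol}=(a_1^2+a_2^2)^{1/2}$, so the ratio varies with $T$. The underlying reason is that in the abelian case the canonical metric on $N$ makes $c_1(N,h_N)$ a flat form whose Riemannian volume \emph{is} $g_{\mathrm{ab}}$, whereas in the toric case $\overline T\subset(\mathbb P^1)^t$ carries a Fubini--Study-type form, and its restriction to the maximal compact torus is \emph{not} the flat metric $g_{\mathrm{tor}}$. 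The paper therefore abandons K\"ahler geometry on the toric side and argues combinatorially: writing the inclusion $T\hookrightarrow\Gm^t$ via an integer matrix $\mathcal A$ (and the defining equations via the orthogonal complement $\mathcal B$), the covolume is $|\det(\mathcal A^\top\mathcal A)|^{1/2}=|\det(\mathcal B^\top\mathcal B)|^{1/2}$, and the Cauchy--Binet formula together with an explicit hyperplane-intersection count on $(\mathbb P^1)^t$ bounds this above and below by $\deg_M(\overline T)$ times constants depending only on $t$. You would need to replace your third step on the toric side by an argument of this kind.
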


If $G$ is an abelian variety, the above inequalities can be sharpened to an equality \cite[Lemma 3.1]{HP}. In the toric case, this is unfortunately not possible. In fact, the one-dimensional subtorus $T \subseteq \Gm^2$ determined by the equation $X_1^{a_1}X_2^{a_2}=1$, $\gcd(a_1,a_2)=1$, in standard coordinates $X_1,X_2$ on $\Gm^2$ has degree $|a_1|+|a_2|$ with respect to the line bundle $\pr_1^\ast \mathcal{O}(1) \otimes \pr_2^\ast \mathcal{O}(1)$ on the standard compactification $\Gm^2 \hookrightarrow \mathbb{P}^1 \times \mathbb{P}^1$, whereas $\mathrm{covol}(\Omega_T \subset V_T)=(|a_1|^2+|a_2|^2)^{1/2}$.

\begin{proof}[Proof of Lemma \ref{lemma::covolume_degree}:] Without loss of generality, we can assume that $\overline{G}=\overline{G}_0$ and $L=L_0$ for the proof of the lemma.
	
The corresponding diagram of algebraic groups induces a commutative diagram with exact rows
\begin{equation*}
\begin{tikzcd}
0 \ar[r] & V_T \ar[r] \ar[d, hook] & V_H \ar[r] \ar[d, hook] & V_B \ar[r] \ar[d,hook] & 0 \\
0 \ar[r] & V_{\Gm^t} \ar[r] & V_G \ar[r] & V_A \ar[r] & 0
\end{tikzcd}
\end{equation*}
consisting of $\mathbb{R}$-linear vector spaces. By Lemma \ref{lemma::splitting}, the restriction $\sigma|_{V_H}: V_H \rightarrow V_T$ is a splitting of the upper row. It hence induces a $g_G$-orthogonal decomposition $V_H = V_{T} \times V_B$ so that
\begin{equation*}
\mathrm{covol}_{g_G}(\Omega_H \subset V_H) = \mathrm{covol}_{g_{\mathrm{tor}}}(\Omega_T \subset V_T) \cdot \mathrm{covol}_{g_\mathrm{ab}}(\Omega_B \subset V_B).
\end{equation*}
By Lemma \ref{lemma::degreeprod} below, we have
\begin{equation*}
	\deg_L(\overline{H}) = \binom{\dim(H)}{\dim(T)} \deg_M(\overline{T}) \deg_N(B).
\end{equation*}
Therefore, it suffices to prove
\begin{equation}
\label{equation::covol_toric}
c_1 \deg_M(\overline{T}) \leq \mathrm{covol}_{g_\mathrm{tor}}(\Omega_T \subset V_T) \leq c_2 \deg_M(\overline{T})
\end{equation}
and
\begin{equation}
\label{equation::covol_abelian}
(\dim B)!\mathrm{covol}_{g_\mathrm{ab}}(\Omega_B \subset V_B) = \deg_N(B).
\end{equation}

We first prove the inequalities in \eqref{equation::covol_toric}. Choose an isomorphism $T \approx \Gm^{t^\prime}$ so that the inclusion $\iota: T \hookrightarrow \Gm^t$ is described by
\begin{equation*}
\iota^\ast(X_u) = Y_1^{a_{u1}} \cdots Y_{t^\prime}^{a_{ut^\prime }},  u \in \{1,\dots,t\}, 
\end{equation*}
in standard coordinates $X_1,\dots,X_t$ (resp.\ $Y_1,\dots,Y_{t^\prime}$) on $\Gm^t$ (resp.\ $\Gm^{t^\prime}$). The period lattice $\Omega_T \subset \Omega_{\Gm^t}$ has the basis
\begin{equation*}
(2\pi i) \begin{pmatrix} a_{11} \\ \vdots \\ a_{t1} \end{pmatrix}, \ \cdots,
(2\pi i) \begin{pmatrix} a_{1t^\prime } \\ \vdots \\ a_{tt^\prime } \end{pmatrix},
\end{equation*}
and hence $\mathrm{covol}_{g_\mathrm{tor}}(\Omega_T \subset V_T)=|\det(\mathcal{A}^\top \mathcal{A})|^{1/2}$ where $\mathcal{A}=(a_{uv}) \in \mathbb{Z}^{t \times t^\prime}$ and $\mathcal{A}^\top$ is its transpose. Alternatively, the torus $T \subseteq \Gm^t$ is cut out by $(t-t^\prime)$ equations
\begin{equation}
\label{equation::torus_singleequation}
X_1^{b_{1v}} X_2^{b_{2v}} \cdots X_t^{b_{tv}} = 1, v \in \{1,\dots, t-t^\prime \},  
\end{equation}
Writing $\mathcal{B}$ for the matrix $(b_{uv}) \in \mathbb{Z}^{t \times (t-t^\prime)}$, the columns of $ \mathcal{B}$ are a $\mathbb{Z}$-basis of the sublattice in $\mathbb{Z}^n$ that is spanned by all elements orthogonal to the columns of $\mathcal{A}$. We hence have $|\det(\mathcal{B}^\top \mathcal{B})|=|\det(\mathcal{A}^\top \mathcal{A})|$ by \cite[Theorem 1.9.10]{Martinet2003}. By the Cauchy-Binet formula, we have
\begin{equation*}
|\det(\mathcal{B}^\top \mathcal{B})|\leq \binom{t}{t - t^\prime} \max_{\substack{\underline{u} \subseteq \{1, \dots, t\} \\ |\underline{u}| = t - t^\prime}} \left\{|\det (\mathcal{B}_{\underline{u}})|^2 \right\}
\end{equation*}
where $\mathcal{B}_{\underline{u}} \in \mathbb{Z}^{(t-t^\prime)\times(t-t^\prime)}$ is the $\underline{u}$-minor of $\mathcal{B}$. We claim that each $|\det(\mathcal{B}_{\underline{u}})|$ is less than $\deg_M(\overline{T})$. After renaming, we may and do assume that $\underline{u}=\{1,2,\dots, t-t^\prime \}$ for this purpose. Let $\mathcal{C}_{\underline{u}} = (c_{uv}) \in \mathbb{Z}^{(t-t^\prime) \times (t-t^\prime)}$ be an upper triangular matrix arising from $\mathcal{B}_{\underline{u}}$ by successive elementary row transformations. Then $\det(\mathcal{B}_{\underline{u}})=\det(\mathcal{C}_{\underline{u}})=c_{1,1}c_{2,2}\cdots c_{t-t^\prime,t-t^\prime}$. This is the same as the number of simple points one obtains by intersecting $\overline{T} \subseteq (\mathbb{P}^{1})^t$ with the linear hyperplanes
\begin{equation*}
X_{t-t^\prime+1} = \delta_1, \dots, X_{t} = \delta_{t^\prime},
\end{equation*}
for sufficiently generic $\delta_1,\dots, \delta_{t^\prime}$. This means nothing else than
\begin{equation}
\label{equation::intersection_number}
c_{1,1}c_{2,2}\cdots c_{t-t^\prime,t-t^\prime} = c_1(\mathrm{pr}_{t-t^\prime+1}^\ast \mathcal{O}_{\mathbb P^1}(1)) \cap \cdots \cap c_1(\mathrm{pr}_{t}^\ast \mathcal{O}_{\mathbb P^1}(1)) \cap [\overline{T}].
\end{equation}
By nefness, this is clearly bounded by
\begin{equation*}
\deg_M(\overline{T}) = 2^{t^\prime}(c_1(\mathrm{pr}_{1}^\ast \mathcal{O}_{\mathbb P^1}(1))+\cdots + c_1(\mathrm{pr}_{t}^\ast \mathcal{O}_{\mathbb P^1}(1)))^{t^\prime}\cap [\overline{T}].
\end{equation*}
This concludes the proof of the right inequality in \eqref{equation::covol_toric}.

For the left inequality in \eqref{equation::covol_toric}, we know from above that we may represent $T$ as being cut out by equations \eqref{equation::torus_singleequation} with $|\det(\mathcal{B}^\top \mathcal{B})|^{1/2} = \mathrm{covol}(\Omega_T \subset V_T)$. Furthermore, the Cauchy-Binet formula shows as well that
\begin{equation*}
|\det(\mathcal{B}^\top \mathcal{B})| \geq |\det(\mathcal{B}_{\underline{u}})|^2,
\end{equation*}
which bounds the intersection number in \eqref{equation::intersection_number}. The degree is just the sum of such numbers, whence the other inequality in \eqref{equation::covol_toric}.

For the equality \eqref{equation::covol_abelian}, we note that
\begin{equation*}
\mathrm{covol}_{g_\mathrm{ab}}(\Omega_B \subset V_B) = \int_{B(\mathbb{C})} \mathrm{vol}(g_{\mathrm{ab}}) = \frac{1}{(\dim B)!}\int_{B(\mathbb{C})} c_1(N,h_N)^{\wedge \dim B} = \frac{\deg_N(B)}{(\dim B)!};
\end{equation*}
the second equality is \cite[Lemma 3.8]{Voisin2007} and the third equality follows from the compatibility between algebraic and topological Chern classes acting on singular homology \cite[Proposition 19.1.2]{Fulton1998} and the fact that the topological Chern class of a hermitian line bundle is given by its Chern form (see e.g.\ \cite[Proposition on p.\ 141]{Griffiths1994}).
\end{proof}

The following is a straightforward application of basic intersection theory. In its proof, we use the notations from \cite[Chapters 1 and 2]{Fulton1998}. We also use the specific compactification $\overline{G}_0$ and the line bundle $L_0 =  M \otimes \overline{\pi}^{\ast} N$ from above. This is admissible because the lemma is only invoked in the proof of Lemma \ref{lemma::covolume_degree}.

\begin{lem}\label{lemma::degreeprod} Let $H \subseteq G$ be a semiabelian subvariety with maximal torus $T \subseteq \Gm^t$ and maximal abelian quotient $B \subseteq A$. Then,
	\begin{equation*}
		\deg_{L_0}(\overline{H}) = \binom{\dim(H)}{\dim(T)} \deg_M(\overline{T}) \deg_N(B).
	\end{equation*}
\end{lem}

\begin{proof}
	A straightforward computation yields that
	\begin{align*}
		\deg_L(\overline{H}) 
		&= \deg(c_1(L)^{\dim(H)} \cap [\overline{H}]) \\
		&= \deg(c_1(M \otimes \overline{\pi}^\ast N)^{\dim(H)} \cap [\overline{H}]) \\
		&= \sum_{i=0}^{\dim(H)} \binom{\dim(H)}{i} \deg ( c_1(M)^i \cap c_1(\overline{\pi}^\ast N)^{\dim(H)-i} \cap [\overline{H}]).
	\end{align*}
	We show next that all addends in this sum are zero except the one for $i=\dim(T)$. As the restriction $\overline{[2]}|_{\overline{H}}: \overline{H} \rightarrow \overline{H}$ has degree $2^{(\dim(T)+2\dim(B))}$, we have $\overline{[2]}_\ast [\overline{H}] = 2^{\dim(T)+2\dim(B)} [\overline{H}]$. An iterated application of the projection formula (\cite[Proposition 2.5 (c)]{Fulton1998}) to $\overline{[2]}|_{\overline{H}}$ and the line bundles $M$ and $\overline{\pi}^\ast N$ yields hence
	\begin{multline*}
		\overline{[2]}_\ast \left(c_1(\overline{[2]}^\ast M)^i \cap c_1(\overline{[2]}^\ast\overline{\pi}^\ast N)^{\dim(H)-i} \cap [\overline{H}]\right) 
		\\
		= 2^{\dim(T)+2\dim(B)} \left( c_1(M)^{i} \cap c_1(\overline{\pi}^\ast N)^{\dim(H)-i} \cap [\overline{H}] \right).
	\end{multline*}
	Taking the degree of these $0$-cycles, we obtain
	\begin{multline*}
		2^{i+2(\dim(H)-i)} \deg (c_1(M)^i \cap c_1(\overline{\pi}^\ast N)^{\dim(H)-i} \cap [\overline{H}]) \\
		=  2^{\dim(T)+2\dim(B)} \deg( c_1(M)^{i} \cap c_1(\overline{\pi}^\ast N)^{\dim(H)-i} \cap [\overline{H}]),
	\end{multline*}
	making also use of the homogeneities $\overline{[2]}^\ast M \approx M^{\otimes 2}$ and $\overline{[2]}^\ast (\overline{\pi}^\ast N) \approx (\overline{\pi}^\ast N)^{\otimes 4}$. This implies that
	\begin{equation*}
	\deg (c_1(M)^i \cap c_1(\overline{\pi}^\ast N)^{\dim(H)-i} \cap [\overline{H}]) = 0
	\end{equation*}
	unless $i=\dim(T)$. We deduce that 
	\begin{equation*}
	\deg_L(\overline{H}) = \binom{\dim(H)}{\dim(T)} \deg ( c_1(M)^{\dim(T)} \cap c_1(\overline{\pi}^\ast N)^{\dim(B)} \cap [\overline{H}]).
	\end{equation*}
	The restriction $\overline{\pi}|_{\overline{H}}: \overline{H} \rightarrow B$ is flat of relative dimension $\dim(T)$. We can therefore pull back cycle classes on $B$ to cycle classes on $\overline{H}$. In particular, we have $(\overline{\pi}|_{\overline{H}})^\ast ([B]) = [\overline{H}]$ and $(\overline{\pi}|_{\overline{H}})^\ast  ([p])=[\overline{\pi}^{-1}(p) \cap \overline{H}]$
	for every point $p \in B$. Since $N$ is ample, we can choose points $p_1,\dots,p_{m} \in A$, $m =  \deg_N(B)$, such that 
	\begin{equation*}
	c_1(N)^{\dim(B)} \cap [B] = [p_1] + [p_2] + \cdots + [p_m] \in A_0(B).
	\end{equation*}
	Using \cite[Proposition 2.5 (d)]{Fulton1998}, we obtain
	\begin{align}
	\label{equation::abelian_intersection}
	c_1({\overline{\pi}}^\ast N)^{\dim(B)} \cap [\overline{H}]
	=
	(\overline{\pi}|_{\overline{H}})^\ast (c_1(N)^{\dim(B)} \cap [B]) 
	=
	\sum_{i=1}^{m} [\overline{H} \cap \overline{\pi}^{-1}(p_i)].
	\end{align}
	By construction, there exists a non-canonical isomorphism between each fiber $(\overline{\pi}_{\overline{H}})^{-1}(p)$ and $\overline{T}$ such that the restriction of $M$ to $(\overline{\pi}_{\overline{H}})^{-1}(p)=\overline{H} \cap \overline{\pi}^{-1}(p_i)$ corresponds to the restriction of $M$ to $\overline{T}$. We infer that
	\begin{align*}
	\deg(c_1(M)^{\dim(T)} \cap [\overline{H} \cap \overline{\pi}^{-1}(p_i)])
	= \deg(c_1(M)^{\dim(T)} \cap [\overline{T}])
	= \deg_M(\overline{T}).
	\end{align*}
	Combining this with \eqref{equation::abelian_intersection}, we deduce
	\begin{equation*}
	\deg ( c_1(M)^{\dim(T)} \cap c_1(\overline{\pi}^\ast N)^{\dim(B)} \cap [\overline{H}]) = (m-n) \deg_M(\overline{T})
	= \deg_N(B) \deg_M(\overline{T}),
	\end{equation*}
	whence the assertion.
\end{proof}

In preparation for the next lemma, we need to fix a norm $\Vert \cdot \Vert: \mathrm{Lie}(G) \rightarrow \mathbb{R}^{\geq 0}$ such that $\Vert v \Vert = g_G(v,v)^{1/2}$ for all $v \in V_G$ (i.e., we fix a norm $\Vert \cdot \Vert$ on $\mathrm{Lie}(G)$ extending the norm induced by the Riemannian metric $g_G$ on $V_G$). For this purpose, let us note that the canonical decomposition $V_G = V_{\Gm^t} \times V_A$ introduced above extends to a canonical decomposition
\begin{equation}
\label{equation::lie_decomposition}
\mathrm{Lie}(G) = I V_{\Gm^t} \times  V_{\Gm^t} \times V_A = I V_{\Gm^t} \times V_{G}
\end{equation}
that is compatible with passing to subgroups $H \subseteq G$ by Lemma \ref{lemma::splitting}. For the sequel, we keep fixed an arbitrary norm $\Vert \cdot \Vert$ such that the decomposition $\mathrm{Lie}(G) = IV_{\Gm^t} \times V_G$ is orthogonal with respect to the associated bilinear form.

The following is an analog of Lemma 3.2 of \cite{HP}.

\begin{lem}
\label{lemma::minkowski}
	There exist constants $c_3,c_4$, depending only on $G$ and $L$, such that the following two assertions are true: 
\begin{enumerate}
	\item For each semiabelian subvariety $H \subseteq G$, the period lattice $\Omega_H \subseteq V_H$ has a basis $\omega_1,\dots,\omega_{2g^\prime+t^\prime}$ with $\Vert \omega_i \Vert \leq c_3 \deg_L(\overline{H})$ ($i \in \{1,\dots, 2g^\prime+t^\prime\}$).
	\item For each $v \in \Omega_G + \mathrm{Lie}(H)$, there exists a period $\omega \in \Omega_G$ such that $v-\omega \in \mathrm{Lie}(H)$ and $\Vert \omega \Vert \leq \Vert v \Vert + c_4 \deg_L(\overline{H})$.
\end{enumerate}
\end{lem}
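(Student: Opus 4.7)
The plan is to handle the two assertions separately, using Minkowski's second theorem for part (1) and an orthogonal-projection argument combined with the basis produced in part (1) for part (2).

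For part (1), I would apply Minkowski's second theorem to the lattice $\Omega_H \subseteq V_H$ with the Euclidean structure inherited from $g_G$. Writing $\mu_1 \leq \cdots \leq \mu_{2g^\prime + t^\prime}$ for the successive minima, this yields
\begin{equation*}
\mu_1 \mu_2 \cdots \mu_{2g^\prime + t^\prime} \ll_{\dim G} \mathrm{covol}_{g_G}(\Omega_H \subset V_H) \leq c_2 \deg_L(\overline{H}),
\end{equation*}
the second inequality being Lemma~\ref{lemma::covolume_degree}. Since $\Omega_H \subseteq \Omega_G$, every non-zero element of $\Omega_H$ has norm at least $\mu_1(\Omega_G)$, a positive constant depending only on $G$. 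Dividing the displayed estimate by $\mu_1(\Omega_G)^{2g^\prime+t^\prime - 1}$ bounds the largest minimum $\mu_{2g^\prime+t^\prime}$ from above by a constant times $\deg_L(\overline{H})$. A standard result on successive minima then supplies a basis $\omega_1,\dots,\omega_{2g^\prime+t^\prime}$ of $\Omega_H$ with $\Vert \omega_i \Vert \ll_{\dim G} \mu_i$, yielding the required bound.

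For part (2), I would pick any $\omega_0 \in \Omega_G$ with $v - \omega_0 \in \mathrm{Lie}(H)$ (which is non-empty by hypothesis) and seek $\eta \in \Omega_H$ such that $\omega := \omega_0 + \eta$ has small norm. Since any two norms on $\mathrm{Lie}(G)$ agreeing with $g_G$ on $V_G$ are equivalent up to a constant depending only on $G$, I may replace $\Vert\cdot\Vert$ by an inner-product norm that additionally makes $V_G$ orthogonal to the complementary real subspace $i V_{\Gm^t} \subseteq \mathrm{Lie}(\Gm^t) \subseteq \mathrm{Lie}(G)$; this only affects the constants $c_3, c_4$. Decompose $\omega_0 = a + b$ orthogonally with $a \in V_H$ and $b \in V_H^\perp$; because $\omega_0 \in V_G$, we even have $b \in V_G \cap V_H^\perp$. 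Expanding $a = \sum_i \alpha_i \omega_i$ in the basis from part (1) and choosing $\eta \in \Omega_H$ that rounds the coefficients $\alpha_i$ to nearest integers gives $\Vert a + \eta \Vert \leq \tfrac{1}{2}(2g^\prime+t^\prime) c_3 \deg_L(\overline{H})$, and $\omega = (a+\eta) + b$ is again an orthogonal decomposition.

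The main remaining obstacle is to bound $\Vert b \Vert$ by $\Vert v \Vert$; this is exactly where the orthogonality of $V_G$ and $i V_{\Gm^t}$ matters. Writing $w := v - \omega_0 \in \mathrm{Lie}(H) = V_H \oplus i V_{T^\prime}$, the projection of $w$ to $V_H^\perp$ lies in $i V_{T^\prime} \subseteq i V_{\Gm^t}$, whereas $b$ lies in $V_G \cap V_H^\perp$. These two pieces are orthogonal summands of the projection of $v = \omega_0 + w$ to $V_H^\perp$, so $\Vert b \Vert \leq \Vert v \Vert$. Combining the estimates yields $\Vert \omega \Vert \leq \Vert v \Vert + c_4 \deg_L(\overline{H})$ for a suitable $c_4$, as required.
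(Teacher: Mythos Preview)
Your argument is correct and follows essentially the same route as the paper: part (1) is exactly the Minkowski argument the paper invokes (citing \cite[Lemma 3.2 (i)]{HP}), and part (2) hinges on the same key fact that the splitting $\mathrm{Lie}(G) = iV_{\Gm^t} \times V_G$ from Lemma \ref{lemma::splitting} is compatible with subgroups, so that $\mathrm{Lie}(H) = iV_{T'} \oplus V_H$. The paper phrases part (2) via the linear projection $\pi: \mathrm{Lie}(G) \to V_G$ with $\pi(\mathrm{Lie}(H)) = V_H$, expanding $\pi(v-\omega_0)$ in the basis $\omega_1,\dots,\omega_{2g'+t'}$ and rounding, then using $\Vert \omega \Vert = \Vert \pi(\omega)\Vert \leq \Vert \pi(v)\Vert + \Vert \pi(v-\omega)\Vert$; your orthogonal-projection variant is an equivalent packaging of the same idea. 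One small caveat: when you replace $\Vert\cdot\Vert$ by an inner-product norm making $V_G \perp iV_{\Gm^t}$, the two norms agree on $V_G$ (hence on $\omega$) but not on general $v$, so strictly speaking the coefficient in front of $\Vert v\Vert$ could pick up a factor; since the paper \emph{chooses} the norm extension, this is harmless, but it is cleaner to fix the good norm from the outset rather than to speak of ``replacing'' it.
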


\begin{proof}
	Again, we can assume that $\overline{G}=\overline{G}_0$ and $L=L_0$ without loss of generality.
	
	Given Lemma \ref{lemma::covolume_degree} above, the first part is a simple application of Minkowski's Theorem (see \cite[Lemma 3.2 (i)]{HP} for details).

	For the second part, consider the orthogonal projection $\psi: \mathrm{Lie}(G) = IV_{\Gm^t} \times V_G \rightarrow V_{G}$ and note that $\psi(\mathrm{Lie}(H)) = V_H$ for any semiabelian subvariety $H \subseteq G$.	Let now $\omega_0 \in \Omega_G$ such that $v - \omega_0 \in \mathrm{Lie}(H)$ and let further $\omega_1,\dots,\omega_{2g^\prime+t^\prime} $ be a basis of $ \Omega_H\subseteq V_H$ as in (1). There exist $r_1,\dots,r_{2g^\prime+t^\prime} \in \mathbb{R}$ such that $$\psi(v - \omega_0)=r_1 \omega_1 + \cdots + r_{2g^\prime+t^\prime} \omega_{2g^\prime+t^\prime}.$$ For each $i \in \{ 1, \dots, 2g^\prime+t^\prime\}$, let $n_i$ be the unique integer such that $0 \leq r_i-n_i < 1$. Setting $\omega = \omega_0 + n_1 \omega_1 + \cdots + n_{2g^\prime+t^\prime}\omega_{2g^\prime+t^\prime} \in V_G$, we have $v - \omega \in \mathrm{Lie}(H)$, $\psi(\omega) = \omega$ and thus
	\begin{align*}
	\Vert \omega \Vert = \Vert \psi(\omega) \Vert 
	&\leq \Vert \psi(v) \Vert + \Vert \psi(v - \omega)\Vert \\
	&\leq \Vert v \Vert + \sum_{i=1}^{2g^\prime+t^\prime} |r_i-n_i| \cdot \Vert \omega_i \Vert \\
	&\leq \Vert v \Vert + c_3(2g^\prime+t^\prime)\deg_L(\overline{H}). \qedhere
	\end{align*}
\end{proof}

\subsection{Heights on Semiabelian Varieties}

In this subsection, we let $G$ be a semiabelian variety over a number field $\KK \subseteq \mathbb C$. Let $h_L$ be an arbitrary Weil height associated to $L$. In this general setting, the second author proved the following theorem, which constitutes a proof of the bounded height conjecture for semiabelian varieties. We write $G^{[s]}$ for the countable union of {all} algebraic subgroups having codimension $\geq s$ in $G$ for some fixed integer $s$. Recall that, for a subvariety $V\subseteq G$, we have defined  $V^{oa}$ right before Theorem \ref{ST}.

\begin{thm}[\cite{Kuehne2020}] \label{BHT} For any subvariety $V \subseteq G$ the height $h_L$ is bounded from above on the set $V^{oa}(\QQ)\cap G^{[\dim(V)]}(\QQ)$.
\end{thm}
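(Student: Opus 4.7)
The plan is to adapt Habegger's strategy for the Bounded Height Conjecture \cite{Habegger2009} to the semiabelian setting, leveraging the archimedean framework developed in Section \ref{subsection::degrees}. The guiding principle is: if $p \in V^{oa}(\QQ) \cap K(\QQ)$ for some algebraic subgroup $K$ with $\codim K \geq d := \dim V$, then the archimedean lift $\tilde p \in \mathrm{Lie}(G)$ lies close to $\mathrm{Lie}(K) + \Omega_G$, and outside the anomalous locus this closeness forces an upper bound on the height of $p$ that is uniform in $K$.

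First I would set up the projection $\pi_K: G \to G/K$ and exploit nondegeneracy: since $\codim K \geq \dim V$ and $p \in V^{oa}$, any positive-dimensional fiber of $\pi_K|_V$ through $p$ would exhibit $p$ as an anomalous point of $V$, so $\pi_K|_V$ is generically finite onto its image and the fiber through $p$ over $0 \in G/K$ is zero-dimensional. Using Lemma \ref{lemma::minkowski}(2), I would adjust $\tilde p$ by a period to produce a lift whose projection onto a complement of $\mathrm{Lie}(K)$ in $\mathrm{Lie}(G)$ has norm controlled by $\deg_L(\overline K)$ and the diameter of a fundamental domain for $\Omega_K$ in $V_K$, which by Lemma \ref{lemma::covolume_degree} is itself controlled by $\deg_L(\overline K)$.

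The core step is a Habegger-type height inequality. Through the explicit metric on $L_0 = M \otimes \overline{\pi}^\ast N$ with its canonical hermitian structures $h_M, h_N$, the archimedean contribution to $h_L(p)$ is comparable to $\Vert \tilde p \Vert$ plus $O(1)$. On the other hand, the non-anomalous hypothesis, combined with a B\'ezout-type argument on the finite scheme $V \cap K$ near $p$, should yield a lower bound showing that $p$ cannot be arbitrarily close to $\pi_K^{-1}(0)$ without being forced into one of the finitely many subvarieties in the family $\varPhi_V$ of Theorem \ref{ST}(b). Matching upper and lower bounds extracts an explicit constant $C(V,L)$, independent of $K$, with $h_L(p) \leq C(V,L)$.

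The main obstacle is the mixed structure of $G$: it does not split canonically as $\Gm^t \times A$, so heights on $G$ do not decompose as a sum of heights on $\Gm^t$ and on $A$. The splitting $\sigma$ of Lemma \ref{lemma::splitting} and the functions $\lambda_i$ of \cite[Lemma 14]{Kuehne2020} are purely archimedean, not algebraic, and transferring the above archimedean displacement estimates to global height bounds requires the specific line bundle $L_0$ and canonical metric choices of Section \ref{subsection::degrees}, together with a non-archimedean analysis using N\'eron models of $G$. This gluing, absent in the pure toric or abelian cases, is the technical heart of the argument and the reason the proof in \cite{Kuehne2020} does not follow formally from its toric and abelian counterparts.
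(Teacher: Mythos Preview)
The paper does not prove this theorem: it is stated with a citation to \cite{Kuehne2020} and used as a black box. There is nothing in the present paper to compare your proposal against.

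As for the proposal itself, it is not a proof but a heuristic sketch, and several of its steps do not hold up. The assertion that ``the archimedean contribution to $h_L(p)$ is comparable to $\Vert \tilde p \Vert$ plus $O(1)$'' is false in the direction you need: Lemma~\ref{lemma::norm_estimate} gives a lift with $\Vert v\Vert$ controlled by $[\KK(p):\KK]\cdot h_L(p)$, but there is no converse bounding $h_L(p)$ by the norm of any particular logarithm, because the height receives contributions from all places while $\Vert\cdot\Vert$ sees only one archimedean embedding. Consequently the ``matching upper and lower bounds'' step has no content: you never produce a lower bound for $h_L(p)$ in terms of anything controllable, nor an upper bound for $h_L(p)$ from the subgroup condition. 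The B\'ezout remark about $V\cap K$ being a finite scheme near $p$ controls a \emph{number of points}, not their heights, and does not interact with the lift $\tilde p$ in any way you have made precise.

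The actual argument in \cite{Kuehne2020} (as in Habegger's toric prototype) is structurally different. One does not work with a single archimedean lift of $p$; rather one varies over surjective homomorphisms $\varphi:G\to G'$ with $\dim G'=\dim V$, compares the canonical height $\hhat{h}(\varphi(p))$ against intersection-theoretic quantities attached to $\overline{\varphi(V)}$, and uses the non-anomalous hypothesis to guarantee that $\varphi|_V$ is generically finite for an optimally chosen $\varphi$ adapted to the subgroup $K$. Uniformity in $K$ comes from an optimization over $\varphi$ together with degree estimates for subgroups, not from any single fundamental-domain computation. Your sketch does not touch this mechanism.
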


%
%

We conclude this subsection with a further lemma.

\begin{lem}
\label{lemma::norm_estimate}
Let $\Vert \cdot \Vert$ be the norm on $\mathrm{Lie}(G)$ introduced in the previous subsection. Then, there exists a constant $c_5 = c_5(G) >0$ such that each $p \in G(\QQ)$ has a preimage $v \in \mathrm{Lie}(G)$ satisfying
\begin{equation*}
\Vert v \Vert \leq c_5 [\KK(p):\KK] \max \{1, h_L(p) \}.
\end{equation*}
\end{lem}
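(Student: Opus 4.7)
The plan is to decompose $\mathrm{Lie}(G) = I V_{\Gm^t} \oplus V_G$ according to \eqref{equation::lie_decomposition} and bound each component of a preimage $v = v_{IV} + v_V$ of $p$ separately. Because $\Omega_G \subseteq V_G$, the component $v_{IV}$ is determined intrinsically by $p$, whereas $v_V$ is only determined modulo $\Omega_G$. So I would first fix the $v_V$-part in a fundamental domain of $\Omega_G$, and then bound the (unique) $v_{IV}$-part using the functionals $\lambda_1,\dots,\lambda_t$ from \cite{Kuehne2020}.

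For $v_V$, I would invoke Lemma \ref{lemma::minkowski}(1) with $H = G$ to obtain a $\Z$-basis of $\Omega_G$ of norm at most $c_3 \deg_L(\overline{G})$. Choosing $v_V$ in the associated fundamental parallelepiped of $\Omega_G \subset V_G$ then bounds $\|v_V\|$ by $(2g+t) c_3 \deg_L(\overline{G})$, a constant depending only on $G$ and $L$.

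For $v_{IV}$, since $\lambda_u \circ \exp_G$ vanishes on $V_G = \exp_G^{-1}(K_G)$, one has $\lambda_u(p) = (\lambda_u \circ \exp_G)(v_{IV})$. The single-factor calculation carried out in the proof of Lemma \ref{lemma::splitting} shows that, under the natural identification $IV_{\Gm^t} \cong \R^t$, the map $v_{IV} \mapsto (\lambda_1(p),\dots,\lambda_t(p))$ is (up to a universal scalar factor of $-2\pi$) the coordinate isomorphism. Consequently $\|v_{IV}\|$ is controlled by $\max_u |\lambda_u(p)|$ up to a constant depending only on $G$ and $L$, and the task reduces to proving the bound $|\lambda_u(p)| \leq C \cdot [\KK(p):\KK] \max\{1, h_L(p)\}$ for a constant $C = C(G,L)$.

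The main obstacle is precisely this last estimate, which relates the analytic functional $\lambda_u$ to the arithmetic Weil height $h_L$. My plan is to identify $\lambda_u$ with $\log|f_u|$ modulo a bounded smooth function on $G(\C)$, where $f_u$ is a rational function on the compactification $\overline{G}_0$ from \cite{Kuehne2020} whose restriction to the maximal torus $T \subseteq G$ is the character $X_u$; such an extension is furnished by the explicit construction of $\overline{G}_0$. The functoriality \eqref{equation::lambda_functoriality} ensures that $\lambda_u - \log|f_u|$ vanishes on the non-compact direction $T(\R)_{>0} \subseteq G(\C)$ and hence descends to a continuous function on the compact quotient $G(\C)/T(\R)_{>0} \cong K_G$; it is therefore globally bounded. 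Standard Weil height theory applied to $f_u$, together with the comparison of Weil heights attached to different ample line bundles up to $O(1)$, then yields $|\log|f_u(p)|| \leq C' [\KK(p):\KK] \max\{1, h_L(p)\}$. Absorbing all multiplicative and additive constants into $c_5$ completes the argument.
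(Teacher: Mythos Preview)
Your overall plan---decompose $v = v_{IV} + v_V$, put $v_V$ in a fundamental parallelepiped, and reduce the bound on $\|v_{IV}\|$ to a bound on $\max_u |\lambda_u(p)|$---is sound and parallels the paper's reasoning. The gap is in your last paragraph: for a genuinely non-split semiabelian variety the difference $\lambda_u - \log|f_u|$ is \emph{not} bounded on $G(\C)$. When the component $\eta_{G,u} \in A^\vee(\KK)$ of the extension class is non-zero, the character $X_u$ of $T$ does not lift to a global unit on $G$ (equivalently, the $\Gm$-torsor over $A$ obtained by pushing out along $X_u$ is non-trivial), so any rational function $f_u$ on $\overline{G}_0$ whose restriction to a torus fibre is $X_u$ necessarily has zeros and poles on $G$ itself, not only on the boundary. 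The function $\lambda_u - \log|f_u|$ is then $T(\C)$-invariant and descends to a N\'eron--Green function on $A$ for the divisor of a rational section of the degree-$0$ line bundle attached to $\eta_{G,u}$; such a function has logarithmic singularities along that divisor. Consequently the compactness of $K_G \cong G(\C)/T(\R)_{>0}$ does not yield boundedness---the function you claim descends continuously to $K_G$ is in fact singular along the image of $\{f_u = 0,\infty\}$.

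The paper obtains the required bound on $\sum_u |\lambda_u(p)|$ by a different mechanism: it identifies $\sum_u |\lambda_u|$ with the local component, at the fixed archimedean place, of the canonical height $\hhat_M$ (here $M$ is the ``toric'' part of $L_0$), and uses that \emph{all} local components of $\hhat_M$ are non-negative \cite[Proposition~4]{Bertrand1995}. Discarding the remaining local terms immediately gives $\sum_u |\lambda_u(p)| \ll [\KK(p):\KK]\,\hhat_M(p) \ll [\KK(p):\KK]\max\{1,h_L(p)\}$, after which a short root-and-rescale argument (choose $p'$ with $[\lceil s_0 \rceil]p' = p$, lift $p' \in \mathcal{K}_1$ inside a fixed compact set, and multiply back) produces the desired preimage $v$. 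Your approach could be repaired by using several rational functions $f_{u,1},\dots,f_{u,m}$ whose divisors on $G$ have empty common support, so that $\min_i(\lambda_u - \log|f_{u,i}|)$ is bounded above on $G(\C)$; but this amounts to rebuilding the local-height machinery that the paper invokes directly.
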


\begin{proof}
Without loss of generality, we can assume $\overline{G}=\overline{G}_0$ and $L=L_0$ where $\overline{G}_0$ and $L_0 = M \otimes \overline{\pi}^\ast N$ are as in the previous subsection. Since the line bundle $\overline{\pi}^\ast N$ is nef with empty base locus, there exists a constant $c_6=c_6(G,N)>0$ such that $$h_L(x) \geq h_M(x) - c_6$$ for all $x \in G(\QQ)$ (\cite[Theorem B.3.6 (b)]{Hindry2000}). Let $\hhat_M: \overline{G}(\QQ) \rightarrow \mathbb R$ be the canonical height associated with $M$ by means of the homogeneity relation $\overline{[2]}^\ast M = M^{\otimes 2}$. By \cite[Theorem B.4.1]{Hindry2000}, we have $$h_M(x) \geq \hhat_M(x) - c_7$$ for some constant $c_7=c_7(M)>0$. 

It is well known that the canonical height $\hhat_M$ decomposes into non-negative local heights such that the local height associated with the archimedean place encoded by the embedding $\mathbb K \subseteq \mathbb C$ equals $\sum_{i=1}^t |\lambda_i| : G(\mathbb C) \rightarrow \mathbb R$ (see e.g.~\cite[Proposition 4]{Bertrand1995}). Writing
\begin{equation*}
\mathcal{K}_{s} = \left\lbrace x \in G(\mathbb{C}): \sum_{i=1}^t |\lambda_i| \leq s \right\rbrace,
\end{equation*}
for some real number $s$, we conclude that $p \in \mathcal{K}_{s_0}$ with
\begin{equation*}
s_0 = [\KK(p):\KK] \hhat_M(p) \ll_{G,N} [\KK(p):\KK] \max \{1, h_L(p) \}.
\end{equation*}
Note that each $\mathcal{K}_s$, $s \geq 0$, is compact since it is a closed subset of the compact space $\overline{G}(\C)$.
Consequently, the preimage $\exp_G^{-1}(\mathcal{K}_1)$ is contained in a subset $\mathcal{K}^\prime + \Omega_G$ with $\mathcal{K}^\prime \subset \mathrm{Lie}(G)$ compact. Choose a point $p^\prime \in G(\QQ)$ such that $[\lceil s_0 \rceil](p^\prime)=p$. As each $\lambda_i$ is a homomorphism, we have $p^\prime \in \mathcal{K}_1$. By compactness, $p^\prime \in \mathcal{K}_1$ has a preimage $v^\prime \in \exp^{-1}(p^\prime)$ with $\Vert v' \Vert \ll_G 1$. Hence $v= \lceil s_0 \rceil \cdot v^\prime$ is the desired preimage of $p$.
\end{proof}

\subsection{Subgroups of semiabelian varieties}

We could not find a reference for the following lemma, but it should be well known to experts.

\begin{lem} \label{lemgalois1} 
	Let $G$ be a semiabelian variety over a field $k$. Then there exists a finite extension $k^\prime/k$ such that all connected algebraic subgroups of $G$ are defined over $k^\prime$ (i.e., all connected subgroups of $G$ are invariant under $\Gal(\overline{k}/k^\prime)$).
\end{lem}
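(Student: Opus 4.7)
The plan is to decompose an arbitrary connected algebraic subgroup $H\subseteq G$ into its subtorus $T_H = H\cap T$ and its image $B_H = \pi(H)\subseteq A$, show that both are defined over a single finite extension $k'/k$ independent of $H$, and then recover $H$ uniquely from the pair $(T_H, B_H)$. First I would enlarge $k$ to a finite extension $k'/k$ satisfying: (i) $T$ splits, i.e.\ $T\cong\Gm^t$ over $k'$, (ii) $A$ admits a polarization $\lambda$ defined over $k'$, and (iii) every endomorphism of $A_{\bar k}$ is already defined over $k'$. Such a $k'$ exists because $\mathrm{End}(A_{\bar k})$ is finitely generated as a $\Z$-module with finite Galois action, and a polarization always exists over some finite extension.

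For the toric part, any subtorus of $\Gm^t$ is cut out by binomial equations $X_1^{a_1}\cdots X_t^{a_t}=1$ with integer exponents, so $T_H$ is automatically defined over $k'$. For the abelian part, I would invoke Poincaré complete reducibility together with the polarization $\lambda$, which yields a bijection between abelian subvarieties $B\subseteq A$ and symmetric idempotents $e\in\mathrm{End}^0(A)$ with respect to the Rosati involution associated to $\lambda$. Since $\mathrm{End}(A)$ is $\Gal(\bar k/k')$-invariant and the Rosati involution is $k'$-rational, every symmetric idempotent is Galois-invariant, hence so is the corresponding abelian subvariety. In particular, $B_H$ is defined over $k'$.

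To conclude I would show that $H$ is determined uniquely by the pair $(T_H, B_H)$. Setting $G_{B_H}:=\pi^{-1}(B_H)\subseteq G$, which is an extension of $B_H$ by $T$, and then quotienting by $T_H$, we obtain the short exact sequence
\begin{equation*}
0\longrightarrow T/T_H\longrightarrow G_{B_H}/T_H\longrightarrow B_H\longrightarrow 0,
\end{equation*}
of which the natural inclusion $H/T_H\hookrightarrow G_{B_H}/T_H$ is a section. Any two sections differ by a homomorphism $B_H\to T/T_H$, which must vanish since there are no non-trivial morphisms from an abelian variety to an affine group. Hence the section, and therefore $H$ itself, is uniquely determined by the Galois-invariant pair $(T_H, B_H)$, whence $H$ is defined over $k'$.

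The principal obstacle I expect is the abelian step: while subtori of a split torus pose no Galois problem, abelian subvarieties of $A$ can form an infinite set, and controlling them simultaneously over a common finite extension requires Poincaré reducibility together with a Galois-rational polarization. This is where the arithmetic enlargement (iii) of $k$ is used in an essential way; the toric step and the uniqueness step are formal by comparison.
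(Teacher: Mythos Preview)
Your argument is correct, and the abelian step is close in spirit to the paper's: both reduce to the finite generation of $\mathrm{End}(A_{\bar k})$, though the paper realizes each abelian subvariety as the identity component of $\ker(\phi)$ for some $\phi\in\mathrm{End}(A)$ rather than via the Rosati idempotent correspondence you use (which costs you the extra assumption of a $k'$-rational polarization).

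The semiabelian step is where the two proofs genuinely diverge. The paper argues via extension classes: using the Weil--Barsotti identification $\mathrm{Ext}^1_k(A,\Gm)=A^\vee(k)$, it observes that the class $\eta_H\in B^\vee(\bar k)^{t'}$ describing $H$ is a $\Z$-linear combination of the restrictions $\iota^\vee(\eta_{G,i})\in B^\vee(k)$, hence already $k$-rational. Your route is more elementary: you recover $H$ from the pair $(T_H,B_H)$ by noting that $H/T_H$ is a homomorphism section of $G_{B_H}/T_H\to B_H$, and such sections are unique because $\mathrm{Hom}(B_H,T/T_H)=0$. This bypasses Weil--Barsotti entirely and replaces an $\mathrm{Ext}^1$ computation by a $\mathrm{Hom}$ vanishing, at the cost of being slightly less explicit about \emph{which} extensions of $B_H$ by $T_H$ actually embed in $G$. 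Either approach settles the lemma; yours is shorter, the paper's tells you a bit more about the structure of the subgroup.
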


Note that the assertion of the above lemma is false for the finite subgroups generated by torsion points, hence requiring connectedness is necessary.

\begin{proof}	
	We first consider the (well-known) case where $G$ is an abelian variety $A$. We start with proving that every connected algebraic subgroup $B \subseteq A$ appears as the component containing $0_A$ of the kernel of an endomorphism of $A$. In fact, there exists a connected algebraic subgroup $C \subseteq A$ such that $B + C = A$ and $B \cap C$ is finite. Write $\varphi: C/(B\cap C) \twoheadrightarrow C$ for the isogeny dual to the quotient map $C \twoheadrightarrow C/(B \cap C)=A/B$, $\iota: C \hookrightarrow A$ for the inclusion, and $\pi: A \twoheadrightarrow A/B$ for the quotient map. Then the kernel of $\iota \circ \varphi \circ \pi: A \rightarrow A$ has connected component $B$. 
	
	The endomorphism ring of $A_{\overline{k}}$ is a finitely generated $\mathbb{Z}$-module \cite[Theorem 3 on p.\ 176]{Mumford1970}, hence there exists a finite extension $k^\prime$ of $k$ such that every endomorphism of $A_{\overline{k}}$ is defined over $k^\prime$. By the above, any connected algebraic subgroup $B \subseteq A$ is the connected component containing $0_A$ of an algebraic subgroup $B^\prime \subseteq A$ invariant under $\Gal(\overline{k}/k^\prime)$ and hence likewise $\Gal(\overline{k}/k^\prime)$-invariant. This settles the case of abelian varieties.
	
	For a general semiabelian variety $G$, we can reduce to this case. Note that we can assume that the maximal subtorus of $G$ is split (i.e., equals $\Gm^t$) by replacing $k$ with a finite extension. By the above, we also assume that all connected algebraic subgroups of $A$ are defined over $k$. Under this assumption, we prove that the same is true for $G$. Let $H \subseteq G$ be a connected algebraic subgroup with maximal subtorus $T' \subseteq \Gm^t$ and maximal abelian quotient $B \subseteq A$. Note that $T'$ is split as a subtorus of a split torus so that we can arrange that $T'=\Gm^{t^\prime}$. Recall that a semiabelian variety $G$ defined over $k$ is described by an extension class $\eta_{G} \in \mathrm{Ext}^1_k(A,T)$ where $A$ is its maximal abelian quotient and $T$ is its maximal subtorus (see e.g.\ \cite[Subsections 1.1 and 1.2]{Kuehne2020} and \cite[Chapter VII]{Serre1988}). Furthermore, the Weil-Barsotti formula (see \cite[Section III.18]{Oort1966} or the appendix to \cite{Moret-Bailly1981}) gives a canonical identification $\mathrm{Ext}^1_k(A,\mathbb{G}_m) = A^\vee(k)$. This means that we can decompose $\eta_{G} = (\eta_{G,1},\cdots,\eta_{G,t})\in A^\vee(k)^{t}$ and $\eta_{H}=(\eta_{H,1},\dots,\eta_{H,t^\prime}) \in B^\vee(\QQ)^{t^\prime}$. Writing $\iota: B \hookrightarrow A$ for the inclusion and $\iota^\vee: A^\vee \rightarrow B^\vee$ for its dual, each $\eta_{H,i}$ is a $\mathbb Z$-linear combination of some $\iota^\vee(\eta_{G,i}) \in B^\vee(k)$, $1\leq i \leq t$. Consequently, all $\eta_{H,i}$, $1\leq i\leq t^\prime$, are contained in $A^\vee(k)$ and thus $H$ is defined over $k$.
\end{proof}

\subsection{An auxiliary lemma}

The following assertion generalizes \cite[Lemma 6]{BHMZ}.

\begin{lem} \label{lemgalois2} 
	Let $G$ be a semiabelian variety over a number field $\mathbb{K}$ such that all its connected subgroups are defined over $\KK$.
	
	\begin{itemize}
		\item[(a)] Let $p \in G(\QQ)$ and $\sigma \in \Gal(\overline{\mathbb{K}}/\mathbb{K})$ be such that $p^{\sigma} - p$ is contained in a coset $q + H$, $q \in G(\KK)$. Then there exists a torsion point $q^\prime \in G(\QQ)$ such that $p^{\sigma} - p \in q^\prime + H$ (i.e., $p^{\sigma} - p$ is contained in a torsion coset of $H$).
		\item[(b)] There exists an integer $e = e(\KK) \geq 1$ such that the following is true: If $p \in G(\QQ)$ satisfies $p^{\sigma} - p \in G(\mathbb{K})$ for all $\sigma \in \Gal(\overline{\mathbb{K}}/\mathbb{K})$, then $[e](p) \in G(\mathbb{K})$. 
	\end{itemize}
\end{lem}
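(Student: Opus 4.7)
For part (a), the plan is to pass to the quotient $G/H$ and show that $\pi(q)$ is a torsion point there, then lift it back to a torsion point of $G$. Since cosets in this paper are translates of connected subgroups, $H$ is connected and hence defined over $\KK$ by the hypothesis on $G$; the quotient map $\pi\colon G\to G/H$ is therefore $\KK$-rational. Writing $\bar q=\pi(q)\in(G/H)(\KK)$, the hypothesis reads $\pi(p)^{\sigma}-\pi(p)=\bar q$, and $\bar q$ is Galois-invariant since it is $\KK$-rational. Let $n\geq 1$ be the size of the $\langle\sigma\rangle$-orbit of $p$, so that $p^{\sigma^n}=p$. Applying $\pi$ to the telescoping identity
\begin{equation*}
0=p^{\sigma^n}-p=\sum_{i=0}^{n-1}(p^\sigma-p)^{\sigma^i}
\end{equation*}
and using $\pi((p^\sigma-p)^{\sigma^i})=\bar q^{\sigma^i}=\bar q$ gives $n\bar q=0$, so $\bar q$ is torsion in $(G/H)(\QQ)$.

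To lift $\bar q$ to a torsion point of $G$, I will use that $H(\QQ)$ is divisible, as $H$ is a connected algebraic group over the algebraically closed field $\QQ$. Pick any $x\in G(\QQ)$ with $\pi(x)=\bar q$; then $[n]x\in H(\QQ)$, and divisibility yields $y\in H(\QQ)$ with $[n]y=[n]x$. Setting $q'=x-y\in G[n](\QQ)$, we have $\pi(q')=\bar q$, hence $q-q'\in H$ and $p^\sigma-p\in q+H=q'+H$, proving (a). For (b) I apply (a) with $H=\{0_G\}$: the hypothesis $p^\sigma-p\in G(\KK)$ matches the hypothesis of (a) with $q=p^\sigma-p$, and its conclusion gives $p^\sigma-p\in G(\QQ)_{\mathrm{tor}}$; combined with $p^\sigma-p\in G(\KK)$ this forces $p^\sigma-p\in G(\KK)_{\mathrm{tor}}$ for every $\sigma$. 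The group $G(\KK)_{\mathrm{tor}}$ is finite: the defining sequence $0\to T\to G\to A\to 0$ induces
\begin{equation*}
0\to T(\KK)_{\mathrm{tor}}\to G(\KK)_{\mathrm{tor}}\to A(\KK)_{\mathrm{tor}},
\end{equation*}
whose outer terms are finite (roots of unity in $\KK$ and Mordell--Weil, respectively). Taking $e=e(G,\KK)$ to be the exponent of $G(\KK)_{\mathrm{tor}}$, we conclude $[e]p^\sigma=[e]p$ for all $\sigma$, hence $[e]p\in G(\KK)$.

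The only step beyond formal manipulation is the divisibility-based lift of a torsion point from $(G/H)(\QQ)$ to $G(\QQ)$ in part (a); this rests on the surjectivity of $[n]\colon H(\QQ)\to H(\QQ)$, a standard fact for connected algebraic groups over an algebraically closed field. Everything else is a telescoping sum together with the standard finiteness of torsion in a semiabelian variety over a number field.
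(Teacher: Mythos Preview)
Your proof is correct and follows essentially the same approach as the paper: pass to $G/H$, show $\pi(q)$ is torsion via a Galois-averaging argument, and for (b) take $e$ to be the exponent of $G(\KK)_{\mathrm{tor}}$. The only cosmetic difference is that in (a) the paper sums $\chi(p^\sigma-p)^\tau$ over all $\tau\in\Gal(\mathbb{L}/\KK)$ for a finite normal extension $\mathbb{L}\ni p$, whereas you telescope over powers of $\sigma$; your version is in fact slightly more explicit, since you spell out the divisibility-based lift of the torsion point and the finiteness of $G(\KK)_{\mathrm{tor}}$, both of which the paper leaves implicit.
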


\begin{proof} 	
	(a)	Let $\mathbb{L}$ be a finite normal extension of $\KK$ such that $p \in G(\mathbb{L})$. Consider the image $\chi(q) \in (G/H)(\mathbb{K})$ under the quotient homomorphism $\chi: G \rightarrow G/H$. We have to prove that $\chi(q) = \chi(p^\sigma - p)$ is torsion. In fact, its $[\mathbb{L}:\KK]$-th multiple equals
	\begin{equation*}
	\sum_{\tau \in \Gal(\mathbb{L}/\KK)} \chi(p^\sigma-p)^\tau =
	\sum_{\tau \in \Gal(\mathbb{L}/\KK)} \chi(p^{\tau \circ \sigma})-
	\sum_{\tau \in \Gal(\mathbb{L}/\KK)} \chi(p^\tau)
	= 0_{G/H}.
	\end{equation*}
	
	(b) The argument of (a), applied to the trivial group $H=1$, shows that $p^\sigma - p \in G(\KK)$ is a torsion point. We can take $e(\KK)$ to be the exponent of the finite group $\mathrm{Tors}(G) \cap G(\KK)$. Indeed, we have $[e](p^\sigma-p)=0$ for all $\sigma \in \Gal(\overline{\KK}/\KK)$, which is equivalent to $([e](p))^\sigma=[e]p$ for all $\sigma \in \Gal(\overline{\KK}/\KK)$. Thus, we have $[e]p \in G(\KK)$.
\end{proof}

\section{An auxiliary proposition}

In this section, we establish an important finiteness proposition needed in the course of our main proof. It generalizes Lemma 4 of \cite{BHMZ}, except for effectivity.

\begin{prop}\label{lemfink} 
	Let $G$ be a semiabelian variety defined over a number field $\mathbb{K}$ and $C \subseteq G$ an irreducible algebraic curve defined over $\QQ$ that does not lie in $G^{[1]}$ (i.e., is not contained in a proper subgroup of $G$). For any integer $e \geq 1$ and any number field $\mathbb{K}$, there are at most finitely many $p \in (C \cap G^{[2]})(\QQ)$ such that $[e](p) \in G(\mathbb{K})$. 
\end{prop}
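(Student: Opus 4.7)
The plan is to combine a Galois-theoretic degree bound, the Bounded Height Theorem \ref{BHT}, Northcott's Theorem, and Mordell--Lang for curves in abelian varieties, arguing by induction on $\dim G$. First I would enlarge $\KK$ to $\mathbb{L} := \KK(G[e])$ and observe that any $p \in S := \{p \in (C\cap G^{[2]})(\QQ) : [e](p)\in G(\KK)\}$ satisfies $[\mathbb{L}(p):\mathbb{L}] \leq |G[e]|$: for $\sigma \in \Gal(\overline{\mathbb{L}}/\mathbb{L})$, $[e](p^\sigma-p)=0$ forces $p^\sigma - p \in G[e]\subseteq G(\mathbb{L})$, and $\sigma\mapsto p^\sigma-p$ is a homomorphism with kernel $\Gal(\overline{\mathbb{L}}/\mathbb{L}(p))$.

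Second, if $C$ is not contained in any coset of a proper connected algebraic subgroup of $G$ (the \emph{non-degenerate} case), then $C^{oa}=C$, and Theorem \ref{BHT} yields a uniform height bound on $C\cap G^{[2]}\subseteq C\cap G^{[1]}$; combined with the degree bound, Northcott's Theorem gives the finiteness of $S$.

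Third, in the \emph{degenerate} case let $H\subsetneq G$ be the smallest connected algebraic subgroup such that $C\subseteq b_0+H$; minimality of $H$ together with the hypothesis on $C$ force $\overline{b_0}\in G/H$ to lie in no proper algebraic subgroup. I would then project to the abelian quotient $\pi: G\to A$. If $\pi|_C$ is non-constant, $C_A:=\pi(C)$ is a curve in $A$ not lying in any proper subgroup of $A$ (else $C$ would sit in the proper subgroup $\pi^{-1}(\text{proper})$), and the group $\Gamma_A := \{x\in A(\QQ):[e]x\in A(\KK)\}$ is finitely generated (Mordell--Weil for $A$ together with finiteness of $A[e]$). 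Mordell--Lang applied to $C_A\cap \Gamma_A$ returns either a finite intersection (then $\pi(S)$ is finite, and fibers of the non-constant $\pi|_C$ are finite, so $S$ is finite) or $C_A = b + B$ for an elliptic subvariety $B\subseteq A$. When $B\subsetneq A$, I pass inductively to the strictly smaller semiabelian variety $G_B:=\pi^{-1}(B)$ and the translated curve $C-c_0\subseteq G_B$; the key geometric fact enabling this reduction is that any $K\in G^{[2]}$ meeting $b_0+G_B$ must satisfy $K+G_B=G$ (otherwise $C\subseteq b_0+G_B\subseteq K+G_B$ would lie in a proper subgroup, contradicting the hypothesis), so that $K\cap G_B$ has codimension $\geq 2$ in $G_B$. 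If $\pi|_C$ is constant, $C$ lies in a coset of the toric part $T$, and I reduce to the toric analog (Lemma~4 of \cite{BHMZ}).

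The main obstacle is the degenerate case. The two resistant configurations are (i) $B=A$ itself an elliptic curve, in which $G_B=G$ and the naive induction on $\dim G$ breaks down, and (ii) the translation $p\mapsto p-c_0$ used to embed $C$ into the smaller semiabelian variety, which replaces the codimension-$\geq 2$ subgroups $K\cap G_B$ by their \emph{translates} inside $G_B$ rather than by subgroups. Controlling these translates — distinguishing torsion from non-torsion images in the quotient $G/K$, using the condition $[e]p\in G(\KK)$ to force the relevant translates to be torsion, and then enlarging them to genuine codimension-$\geq 2$ subgroups — constitutes the delicate technical heart of the proof.
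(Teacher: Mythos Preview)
Your non-degenerate case (Theorem~\ref{BHT} + Galois degree bound + Northcott) is fine. The degenerate case, however, is genuinely incomplete, and the two obstacles you flag are not merely ``delicate'' --- as stated, neither can be overcome along the lines you suggest. For obstacle (ii), the translation $p\mapsto p-c_0$ turns the codimension-$\geq 2$ \emph{subgroup} containing $p$ into a \emph{coset} of $K_0\cap G_B$ inside $G_B$; this coset is torsion in $G_B$ precisely when the image of $c_0$ in $G/K_0\cong G_B/(K_0\cap G_B)$ is torsion, and since $K_0$ varies with $p$ while $c_0$ is fixed and typically non-torsion, there is no reason for this. The condition $[e]p\in G(\KK)$ constrains the image of $p$ in $G/K_0$, not that of $c_0$, so it does not help. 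Hence $p-c_0$ need not lie in $G_B^{[2]}$ at all, and the induction cannot be launched. Obstacle (i) is equally fatal: whenever $\dim A=1$ and $\pi|_C$ is dominant (for instance $G$ a non-split extension of an elliptic curve by $\Gm^t$, exactly the situation in the paper's Pell-equation application), your scheme yields no dimension drop whatsoever.

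The paper's argument is organized completely differently and bypasses both issues. After replacing $C$ by $[e](C)$ to reduce to $e=1$, it inducts on the \emph{toric rank} $t$ rather than on $\dim G$; the base case $t=0$ is then simply Faltings' theorem for curves in abelian varieties, with no restriction on $g$, which swallows your obstacle (i) whole. For the inductive step there is no translation of $C$. Instead, $(C\cap G^{[2]})(\KK)$ is split according to whether the point lies in a certain finite-rank subgroup $\Gamma(\KK,S)\subseteq G(\QQ)$ defined through the non-archimedean local toric heights $\lambda_{i,\nu}$ (Lemma~\ref{lemma::mordell_lang}); that part is handled by Mordell--Lang for \emph{semiabelian} varieties. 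For a point $x\notin\Gamma(\KK,S)$ one has some nonzero vector $(\lambda_{1,\nu_0}(x),\dots,\lambda_{t,\nu_0}(x))$, and a valuation-theoretic lemma (Lemma~\ref{lemma::gap_filler}, proved by writing down explicit toric coordinates over an affine cover of $A$) shows that this vector satisfies one of a \emph{fixed finite} list of rank-$(t-1)$ integer relations. Each such relation singles out a one-dimensional subtorus $H^{(\alpha)}\subseteq\Gm^t$, and the nonvanishing forces $H^{(\alpha)}$ to lie inside the maximal torus of the codimension-$\geq 2$ subgroup containing $x$; quotienting $G$ by $H^{(\alpha)}$ drops $t$ by one, and because one quotients by a subgroup the image of $x$ again lies in the $[2]$-locus of the quotient --- no coset problem ever arises. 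Neither Theorem~\ref{BHT} nor any Northcott-type argument enters this proof.
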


Before being able to prove the proposition, we have to start with some preparatory lemmas. In the following, $G$ is a semiabelian variety defined over a number field $\KK$ and we assume that $G$ is the extension of the abelian variety $A$ by a split torus $\mathbb{G}^t_m$. Furthermore, $C \subseteq G$ is an irreducible subcurve defined over $\QQ$. We consider the compactification $\overline{G}$ of $G$ and the line bundles $M$, $N$ on $G$ as in Subsection \ref{subsection::degrees}. In addition, we recall the decomposition $$M = \bigotimes_{i=1}^t \left(M_{\overline{G}, i}^{(0)} \otimes M_{\overline{G}, i}^{(\infty)}\right)$$ from \cite[Section 2.2]{Kuehne2018} and set $M_i = M_{\overline{G}, i}^{(0)} \otimes M_{\overline{G}, i}^{(\infty)}$ for each $i \in \{1,\dots, t\}$. Finally, we write $\Sigma_{\mathbb{K}}$  for the set of places of $\mathbb{K}$ and $\Sigma_{\mathbb{K},f}$ for the finite ones. 

In the following lemma we use the Weil functions $\lambda_{i,\nu}$, for $i=1,\dots, t$ and $\nu \in \Sigma_{\mathbb{K}}$, given by \cite[Proposition 2.6]{Vojta1996}.

\begin{lem}
	\label{lemma::gap_filler}
	There exists a finite set of places $S \subseteq \Sigma_{\mathbb{K},f}$ and a finite set of rank $(t-1)$ matrices $\mathcal{M}^{(\alpha)} \in \mathbb{Z}^{(t-1) \times t}$, $\alpha \in \mathcal{I}$, with the following property: For each $\nu \in \Sigma_{\mathbb{K},f} \setminus S$ and $x \in C(\mathbb{K})$, there exists some $\alpha_0 = \alpha_0(\nu, x) \in \mathcal{I}$ such that
	\begin{equation}
	\label{equation::not_that_good_case}
	\mathcal{M}^{(\alpha_0)} \cdot 
	\begin{pmatrix} \lambda_{1,\nu}(x) \\ \vdots \\ \lambda_{t,\nu}(x) \end{pmatrix}
	=
	\begin{pmatrix} 0 \\ \vdots \\ 0 \end{pmatrix}.
	\end{equation}
\end{lem}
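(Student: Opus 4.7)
The plan is to compactify $C$ integrally and to exploit the fact that at a non-archimedean place $\nu$ of good reduction, each Weil function $\lambda_{i,\nu}(x)$ is controlled by the intersection of the section corresponding to $x$ with the horizontal divisor of $M_i$ on a fixed integral model of $\overline{C}$. Away from finitely many ``bad'' places, this reduces to a finite discrete datum -- namely where the reduction of $x$ lies on the special fibre of the model -- and only finitely many such reduction behaviours can occur, each imposing its own fixed linear relation on the tuple $(\lambda_{1,\nu}(x),\dots,\lambda_{t,\nu}(x))$.

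In detail, I would fix a smooth proper model $\overline{\mathscr{C}}$ over $\mathrm{Spec}\,\mathcal{O}_{\mathbb{K},S_0}$ (for a suitable finite set $S_0 \subseteq \Sigma_{\mathbb{K},f}$) of the normalization of the Zariski closure of $C$ in $\overline{G}$; enlarging $\mathbb{K}$ harmlessly at the outset if needed, everything can be assumed defined over $\mathbb{K}$. The pulled-back divisor of $M_i$ on $\overline{\mathscr{C}}_{\mathbb{K}}$ is then supported on the finite boundary set $\mathcal{P} = (\overline{C}\setminus C)(\overline{\mathbb{K}})$, with multiplicity $m_{i,p} = |\mathrm{ord}_p(X_i)|$ at each $p$, where $X_i$ is the $i$-th toric coordinate restricted to $\overline{\mathscr{C}}_{\mathbb{K}}$. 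Take $\mathcal{I} = \mathcal{P} \sqcup \{\mathrm{gen}\}$; for each $p \in \mathcal{P}$ choose $\mathcal{M}^{(p)} \in \mathbb{Z}^{(t-1)\times t}$ of rank $t-1$ whose null space is generated by the non-zero tuple $(m_{1,p},\dots,m_{t,p})$, and for the index $\mathrm{gen}$ take any rank-$(t-1)$ integer matrix. Now enlarge $S_0$ to $S$ so that for $\nu \notin S$ the specializations of the points of $\mathcal{P}$ at $\nu$ are pairwise distinct and so that the Weil function $\lambda_{i,\nu}$ from \cite[Proposition 2.6]{Vojta1996} equals the integer-valued intersection multiplicity on $\overline{\mathscr{C}}$ (i.e.\ its $O(1)$ ambiguity vanishes for $\nu \notin S$). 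Given $x \in C(\mathbb{K})$ and $\nu \notin S$, the section $\mathrm{Spec}\,\mathcal{O}_{\mathbb{K},\nu} \to \overline{\mathscr{C}}$ extending $x$ reduces either to an interior point of $\overline{\mathscr{C}}_\nu$ (in which case $\lambda_{i,\nu}(x) = 0$ for all $i$, and $\alpha_0 = \mathrm{gen}$ works) or to the reduction of a unique $p_{\alpha_0} \in \mathcal{P}$. In the latter case, letting $\pi_{p_{\alpha_0}}$ be a local uniformizer, a direct local computation gives $\lambda_{i,\nu}(x) = m_{i,p_{\alpha_0}} \cdot v_\nu(\pi_{p_{\alpha_0}}(x)) \cdot \log q_\nu$ for every $i$, so the vector $(\lambda_{1,\nu}(x),\dots,\lambda_{t,\nu}(x))$ is a non-negative scalar multiple of $(m_{1,p_{\alpha_0}},\dots,m_{t,p_{\alpha_0}})$ and is annihilated by $\mathcal{M}^{(p_{\alpha_0})}$.

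The main obstacle I foresee is identifying the Weil function provided by Vojta's general proposition with the actual integer-valued intersection multiplicity on $\overline{\mathscr{C}}$ for almost all $\nu$. This requires that the local defining equations of $M_i$ can be chosen compatibly with the fixed integral model, and that any two such choices differ by a function bounded uniformly and non-trivial at only finitely many places, which can then be absorbed into $S$. A further minor subtlety -- that the points of $\mathcal{P}$ may not be $\mathbb{K}$-rational individually -- causes no trouble since the tuple $(m_{1,p},\dots,m_{t,p})$ used to define $\mathcal{M}^{(p)}$ is Galois-invariant, so the construction descends without issue.
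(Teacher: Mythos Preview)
Your strategy is valid and genuinely different from the paper's. The paper does not spread $\overline{C}$ out integrally; instead it unwinds Vojta's construction of $\lambda_{i,\nu}$ via N\'eron functions on $A$, chooses for each pair $(x,\nu)$ local trivialisations so that the N\'eron correction vanishes, and thereby identifies $\lambda_{i,\nu}(x)$ with $-\log|z^{(i)}_{k_i,k_i'}(x)|_\nu$ for a suitable toric coordinate. This pushes the problem to finitely many auxiliary curves in $\Gm^t$, where a separate ultrametric/Newton-polygon lemma (the paper's Lemma~\ref{lemma::monomial}) produces the matrices. Your route bypasses both the explicit N\'eron-function bookkeeping and the auxiliary lemma by reading everything off the special fibre of a fixed model of $\overline{C}$: the finitely many boundary points supply the finitely many proportionality vectors directly. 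This is cleaner, but note that the ``obstacle'' you flag is exactly where the work hides: proving that Vojta's $\lambda_{i,\nu}|_C$ agrees with your intersection-theoretic local height for almost all $\nu$ amounts to checking that $\lambda_{i,\nu}|_C$ extends to a Weil function on $\overline{C}$ for the \emph{correct} divisor, and that verification requires unpacking the same N\'eron-function formula the paper uses.

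Two small corrections. First, there is no global ``$i$-th toric coordinate $X_i$'' on a general semiabelian $G$; what is well-defined is the multiplicity at $p$ of the pullback of the boundary divisor $D_i^{(0)}-D_i^{(\infty)}$, and you should phrase $m_{i,p}$ that way. Second, and relatedly, the absolute value in $m_{i,p}=|\mathrm{ord}_p(X_i)|$ is wrong: $\lambda_{i,\nu}$ is a Weil function for $D_i^{(0)}-D_i^{(\infty)}$, not $D_i^{(0)}+D_i^{(\infty)}$, so at a boundary point where the curve exits through $\{z^{(i)}=\infty\}$ one has $\lambda_{i,\nu}(x)<0$. With the signed $m_{i,p}=\mathrm{ord}_p(z^{(i)})$ your proportionality claim and the rest of the argument go through.
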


\begin{proof}
	We start with some geometric constructions. Let
	\begin{equation*}
	\eta_G = (P_1,\dots,P_t) \in \mathrm{Pic}_0(A)(\mathbb K)^t
	\end{equation*}
	be the extension class describing the semiabelian variety $G$. By \cite[Theorem 1 on p.\ 77]{Mumford1970}, there exist points $x_1,\dots,x_t \in A(\mathbb K)$ such that $P_i = T_{x_i}^\ast N \otimes N^{\otimes -1}$ ($1\leq i \leq t$) where $T_{x}: A \rightarrow A$ denotes the translation by $x \in A(\mathbb K)$. As $N$ is very ample and thus base-point free, there exist global sections $\mathbf{s}_k$, $1\leq k \leq k_0$, of $N$ such that the divisors $D_k=\mathrm{div}_N(\mathbf{s}_k)$ satisfy $\mathcal{O}(D_k) \approx N$, and
	\begin{equation}
	\label{equation::disjointness}
	\mathrm{supp}(D_1) \cap \mathrm{supp}(D_2) \cap \cdots \cap \mathrm{supp}(D_{k_0}) = \emptyset.
	\end{equation}
	We furthermore assume that $0_A, x_i \notin \mathrm{supp}(D_k)$ for all $i \in \{1,\dots, t\}$ and $k \in \{1,\dots,k_0\}$. Let us set $E_{k,k^\prime}^{(i)} = T_{x_i}^{\ast}D_k -D_{k^\prime}$ and notice that $$0_{A} \notin \mathrm{supp}(E_{k,k^\prime}^{(i)}) \subseteq (\mathrm{supp}(D_k) - x_i) \cup \mathrm{supp}(D_{k^\prime})$$ and 
	\begin{equation*}
	\mathcal{O}(E_{k,k^\prime}^{(i)}) \approx \mathcal{O}(T_{x_i}^{\ast}D_k) \otimes \mathcal{O}(-D_{k^\prime}) \approx T_{x_i}^\ast N \otimes N^{\otimes -1} \approx P_i
	\end{equation*}
	for all $i \in \{1,\dots,t\}$ and $k,k^\prime \in \{1,\dots,k_0 \}$.
	
	With each divisor $D_k$, $1\leq k \leq k_0$, and each place $\nu \in \Sigma_{\mathbb K}$, we can assign a Néron function $$\lambda_{D_k,\nu}\! : (A \setminus \mathrm{supp}(D_k))(\mathbb{C}_\nu) \longrightarrow \mathbb{R}$$ (compare \cite[Theorem 11.1.1]{Lang1983}); the Néron functions $(\lambda_{D_k,\nu})_{\nu \in \Sigma_{\mathbb K}}$ are only unique up to $\Sigma_{\mathbb{K}}$-constants and we make here a choice once and for all. By \cite[Corollary 10.3.3]{Lang1983}, the disjointness property \eqref{equation::disjointness}
	implies that there exists a finite set $S_0 \subseteq \Sigma_{\mathbb K,f}$ such that 
	\begin{equation}
	\label{equation::minimum_lambda}
	\min \{\lambda_{D_1,\nu}(x),\lambda_{D_2,\nu}(x), \dots, \lambda_{D_{k_0},\nu}(x) \} = 0
	\end{equation}
	for all $\nu \in  \Sigma_{\mathbb K,f}\setminus S_0$ and all $x \in A(\mathbb{C}_\nu)$.  By \cite[Theorem 11.1.1 (1) and (4)]{Lang1983}, the function
	\begin{equation*}
	\lambda_{E_{k,k^\prime}^{(i)},\nu} = (\lambda_{D_k,\nu} \circ T_{x_i} - \lambda_{D_{k^\prime},\nu}): (A \setminus \mathrm{supp}(E_{k,k^\prime}^{(i)}))(\mathbb{C}_\nu) \longrightarrow \mathbb{R}
	\end{equation*}
	is a Néron function for the divisor $E_{k,k^\prime}^{(i)} = T_{x_i}^{\ast}D_k -D_{k^\prime}$, $k,k^\prime \in \{1,\dots, k_0 \}$. Define the open subsets $U_{k,k^\prime}^{(i)} = A \setminus \mathrm{supp}(E_{k,k^\prime}^{(i)})$. By applying \eqref{equation::disjointness} twice, we obtain 
	\begin{align*}
		\bigcap_{1\leq k, k^\prime \leq k_0} \mathrm{supp}(E_{k,k^\prime}^{(i)}) 
		&\subseteq 
		\bigcap_{1\leq k, k^\prime \leq k_0} \left(\mathrm{supp}(D_k) - x_i) \cup \mathrm{supp}(D_{k^\prime})\right) \\
		&\subseteq
		\bigcap_{1\leq k \leq k_0} \left( \bigcap_{1\leq k^\prime \leq k_0} (\mathrm{supp}(D_k) - x_i) \cup \mathrm{supp}(D_{k^\prime}) \right) \\
		&\subseteq
		\bigcap_{1\leq k \leq k_0} \left(\mathrm{supp}(D_k) - x_i\right) \\
		&= \emptyset.
	\end{align*}
	In other words, $\bigcup_{1\leq k,k^\prime \leq k_0} U_{k,k^\prime}^{(i)} = A$ for all $1\leq i\leq t$. Furthermore, the restriction of the line bundle $P_i$ to $U_{k,k^\prime}^{(i)}$ is trivial, so that we can fix isomorphisms $\phi_{k,k^\prime}^{(i)}: P_i|_{U_{k,k^\prime}^{(i)}} \rightarrow U_{k,k^\prime}^{(i)} \times \mathbb{A}^1$. Projecting to the second factor induces ``toric coordinates'' $$z^{(i)}_{k,k^\prime}: \pi^{-1}(U_{k,k^\prime}^{(i)}) \rightarrow \mathbb{A}^1$$ ($1\leq i \leq t$, $1\leq k, k^\prime \leq k_0$). There exists a finite subset $S_1 \subseteq \Sigma_{\mathbb K, f}$ such that
	\begin{equation*}
	\log |z_{k,k^\prime}^{(i)}(0_G)|_\nu =0= \lambda_{E_{k,k^\prime}^{(i)},\nu}(0_A)
	\end{equation*}
	for all $\nu \in \Sigma_{\mathbb K, f} \setminus S_1$.\footnote{Indeed, $z_{k,k^\prime}^{(i)}(0_G)$ is an algebraic number so the first equality is clear. For the second equality, note that if $\lambda_{D,\nu}\!: (X \setminus \mathrm{supp}(D) )(\C_\nu) \rightarrow \R$, $\nu \in \Sigma_{\mathbb{K}}$, is the collection of Weil functions associated with an arbitrary Néron divisor $D$ on a $\mathbb{K}$-algebraic variety $X$, then for every point $x\in (X \setminus \mathrm{supp}(D))(\mathbb{K})$ we have $\lambda_{\nu}(x) = 0$ for almost all places $\nu$ of $\mathbb{K}$ (see \cite[Section 10.2]{Lang1983}). A closer look at the Néron-Tate limit process reveals that the same is then also true for the canonical heights on abelian varieties (see \cite[Equation (19)]{Call1993}).} From the definition of $\lambda_{i,\nu}$, $1\leq i \leq t$, in \cite[(2.6.3)]{Vojta1996}, we know that
	\begin{equation*}
	\lambda_{i,\nu}(x) = 
	\left(-\log(|z^{(i)}_{k,k^\prime}(x)|_\nu) + \lambda_{E_{k,k^\prime}^{(i)},\nu}(\pi(x))\right)
	-
	\left(-\log(|z^{(i)}_{k,k^\prime}(0_G)|_\nu) + \lambda_{E_{k,k^\prime}^{(i)},\nu}(0_A)\right)
	\end{equation*}
	for all $x \in \pi^{-1}(U^{(i)}_{k,k^\prime})(\mathbb C_\nu)$. Consequently, we have
	\begin{equation*}
	\lambda_{i,\nu}(x) = 
	-\log(|z^{(i)}_{k,k^\prime}(x)|_\nu) + \lambda_{E_{k,k^\prime}^{(i)},\nu}(\pi(x))
	\end{equation*}
	for all $x \in \pi^{-1}(U^{(i)}_{k,k^\prime})(\mathbb C_\nu)$ and $\nu \in \Sigma_{\mathbb K, f} \setminus S_1$.
	
	For every point $p \in C(\mathbb K)$, every $i \in \{1,\dots, t\}$ and every $\nu \in \Sigma_{\mathbb{K},f} \setminus S_0$, we can use \eqref{equation::minimum_lambda} to pick $k_i, k^\prime_i\in \{1, \dots, k_0\}$, which may depend on $p,i,\nu$, such that
	\begin{equation*}
	\lambda_{D_{k_i},\nu}(\pi(p) + x_i) = \lambda_{D_{k^\prime_i},\nu}(\pi(p)) = 0;
	\end{equation*}
	in particular, this means $\pi(p), \pi(p) + x_i \in U_{k_i,k_i^\prime}^{(i)}$. It follows that 
	\begin{equation*}
	\lambda_{E_{k,k^\prime}^{(i)},\nu}(\pi(p)) = \lambda_{D_{k_i},\nu}(\pi(p) + x_i )-  \lambda_{D_{k^\prime_i},\nu}(\pi(p)) = 0,
	\end{equation*}
	so that
	\begin{equation*}
	\lambda_{i,\nu}(p) = -\log(|z^{(i)}_{k_i,k^\prime_i}(p)|_{\nu})
	\end{equation*}
	for all $\nu \in \Sigma_{\mathbb{K},f}\setminus(S_0\cup S_1)$.
	Set $U=\pi^{-1}(U^{(1)}_{k_1,k_1^\prime}) \cap \cdots \cap \pi^{-1}(U^{(t)}_{k_t,k_t^\prime})$ and consider the map
	\begin{equation*}
	\varphi_p: 
	U \longrightarrow \mathbb{G}_m^t, \ x \longmapsto (z_{k_1,k_1^\prime}^{(1)}(x),z_{k_2,k_2^\prime}^{(2)}(x),\dots,z_{k_t,k_t^\prime}^{(t)}(x)).
	\end{equation*}
	Note that, although $\varphi_p$ depends on $p$, there are at most finitely many choices for this map. Therefore, there is a finite set $\{p_1,\dots,p_r\}\subseteq \mathbb{G}_m^t (\overline{\KK})$ of points and subcurves $C_1,\dots,C_s \subseteq \mathbb{G}_m^t$, independent of $p$ and $\nu_0$ such that the Zariski closure of $\varphi_p(C \cap U) \subseteq \mathbb{G}_m^t$ is among the elements of $\{p_1,\dots,p_r, C_1,\dots,C_s\}$. By Lemma \ref{lemma::monomial} below, we obtain a finite set $S_2 \subseteq \Sigma_{\mathbb{K},f}$ and matrices $\mathcal{M}^{(\alpha)} \in \mathbb{Z}^{(t-1) \times t}$ of rank $(t-1)$, $\alpha \in \mathcal{I}$, such that for each $x = (x_1,\dots,x_t) \in \varphi_p(C \cap U)$, there exists $\alpha_0 \in \mathcal{I}$ satisfying
	\begin{equation*}
	\mathcal{M}^{(\alpha_0)} \cdot 
	\begin{pmatrix} \log |x_1|_{\nu} \\ \vdots \\ \log |x_t|_{\nu} \end{pmatrix}
	=
	\begin{pmatrix} 0 \\ \vdots \\ 0 \end{pmatrix}.
	\end{equation*}
	for every $\nu \in \Sigma_{\mathbb{K},f} \setminus (S_0 \cup S_1 \cup S_2)$ and all $x \in \varphi_p(C \cap U)$. Applying this for $x=\varphi_p(p)$, we obtain the assertion of the lemma, as $\lambda_{i,\nu} = - \log |z_{k,k^\prime}^{(i)}|_\nu$.
\end{proof}

\begin{lem}
	\label{lemma::monomial}
	Let $C \subseteq \mathbb{G}_{m,\mathbb K}^t$ be an irreducible curve. There exist finitely many places $S \subseteq \Sigma_{\mathbb{K},f}$ and a finite set of rank $(t-1)$ matrices $\mathcal{M}^{(\alpha)} \in \mathbb{Z}^{(t-1) \times t}$, $\alpha \in \mathcal{I}$, with the following property: For each $\nu \in \Sigma_{\mathbb{K},f} \setminus S$ and $\underline{z} = (z_1,\dots,z_t) \in C(\mathbb{K})$, there exists some $\alpha_0 = \alpha_0(\nu, \underline{z}) \in \mathcal{I}$ such that
	\begin{equation}
	\label{equation::not_that_good_case2}
	\mathcal{M}^{(\alpha_0)} \cdot 
	\begin{pmatrix} \log|z_1|_\nu \\ \vdots \\ \log|z_t|_\nu \end{pmatrix}
	=
	\begin{pmatrix} 0 \\ \vdots \\ 0 \end{pmatrix}.
	\end{equation}
\end{lem}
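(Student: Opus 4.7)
The plan is to use the normalization of the projective closure of $C$ together with a good integral model, reducing the claim to a local valuation computation at reductions of rational points.

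Let $\tilde C$ denote the normalization of the Zariski closure of $C$ in $(\mathbb{P}^1)^t$, a smooth projective curve on which each coordinate $z_i$ defines a rational function. The complement $B = \tilde C \setminus C$ is finite and non-empty (it is precisely the locus where some $z_i$ has a zero or pole, and since $C$ is a genuine curve in $\Gm^t$ no $z_i$ is constant). After replacing $\KK$ by a finite extension $\KK^\prime$ we may assume $B \subseteq \tilde C(\KK^\prime)$. For each $P \in B$, the vector
\[
\mathbf{n}_P = (\ord_P(z_1),\ldots,\ord_P(z_t)) \in \Z^t \setminus \{0\}
\]
is Galois-invariant, because the coordinate functions are $\KK$-rational; hence the finite set $\mathcal{N} = \{\mathbf{n}_P : P \in B\}$ is well-defined. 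For each $\mathbf{n} \in \mathcal{N}$ I take $\mathcal{M}^{(\mathbf{n})} \in \Z^{(t-1)\times t}$ to be a rank $(t-1)$ integer matrix with kernel $\R \mathbf{n}$, obtained by choosing any $\Z$-basis of the sublattice $\mathbf{n}^\perp \cap \Z^t$. This defines the collection $\{\mathcal{M}^{(\alpha)}\}_{\alpha \in \mathcal{I}}$.

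Next, I would fix a good integral model $\tilde{\mathcal{C}} \to \mathrm{Spec}\,\mathcal{O}_{\KK^\prime, S_0}$ of $\tilde C$. By enlarging the finite set $S_0 \subseteq \Sigma_{\KK^\prime, f}$ if necessary, we may assume that $\tilde{\mathcal{C}}$ is smooth projective, that the Zariski closures $\overline{P} \subseteq \tilde{\mathcal{C}}$ of the points $P \in B$ are pairwise disjoint sections, and that the Weil divisor of each $z_i$ on $\tilde{\mathcal{C}}$ is purely horizontal, equal to $\sum_{P \in B}\ord_P(z_i)\overline{P}$. We let $S \subseteq \Sigma_{\KK,f}$ consist of the places of $\KK$ lying below those in $S_0$.

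Now fix $\nu \in \Sigma_{\KK,f} \setminus S$ and $\underline z \in C(\KK)$, and choose a place $\nu^\prime | \nu$ of $\KK^\prime$. By the valuative criterion of properness, $\underline z$ extends uniquely to an $\mathcal{O}_{\KK^\prime, \nu^\prime}$-point of $\tilde{\mathcal{C}}$ whose reduction $\bar z$ is a closed point of the special fibre. If $\bar z$ lies outside every $\overline{P}$, then each $z_i$ is a unit in $\mathcal{O}_{\tilde{\mathcal{C}}, \bar z}$, so $v_{\nu^\prime}(z_i(\underline z)) = 0$ for all $i$ and the asserted identity holds trivially for any $\alpha_0 \in \mathcal{I}$. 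Otherwise, by disjointness, $\bar z$ lies on a unique $\overline{P_0}$; choosing a local equation $\varpi$ of $\overline{P_0}$ in $\mathcal{O}_{\tilde{\mathcal{C}}, \bar z}$, one obtains $z_i = u_i\varpi^{\ord_{P_0}(z_i)}$ with each $u_i$ a unit, and evaluating at $\underline z$ yields $v_{\nu^\prime}(z_i(\underline z)) = \ord_{P_0}(z_i) \cdot v_{\nu^\prime}(\varpi(\underline z))$; since the valuations on $\KK^\times$ at $\nu$ and $\nu^\prime$ are proportional, the vector $(\log|z_i|_\nu)_i$ lies on the line $\R\mathbf{n}_{P_0}$ and is annihilated by $\mathcal{M}^{(\mathbf{n}_{P_0})}$.

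The main obstacle lies in justifying that the integral model $\tilde{\mathcal{C}}$ can be chosen so that the boundary closures are pairwise disjoint sections and the divisors of the $z_i$ are purely horizontal; both are generic conditions that hold outside a finite set of places by standard spreading-out, and once established, the rest of the argument is essentially formal.
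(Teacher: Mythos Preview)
Your argument is correct and takes a genuinely different route from the paper's. The paper proceeds by a direct ultrametric/Newton-polygon computation: for $t=2$ one writes $C$ as the zero locus of a Laurent polynomial $F=\sum a_{i_1,i_2}X_1^{i_1}X_2^{i_2}$, excludes the finitely many places where some nonzero $a_{i_1,i_2}$ is not a unit, and observes that at any remaining finite place two distinct monomials must tie for the maximum absolute value, giving a nontrivial relation $(j_1-k_1)\log|z_1|_\nu+(j_2-k_2)\log|z_2|_\nu=0$ with the pair $(j_1-k_1,j_2-k_2)$ ranging over a fixed finite set. The case $t>2$ is reduced to this by projecting onto each plane $\{X_1,X_{t'}\}$ and assembling the resulting relations into a matrix. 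Your approach instead identifies the finite family of matrices intrinsically, as the annihilators of the order vectors $\mathbf n_P$ at the boundary points of $\tilde C$, and the place-by-place verification becomes a reduction computation on a spread-out model. The paper's method is more elementary (no normalization, no integral models, no spreading out), while yours is more conceptual and makes the geometric origin of the matrices transparent; it also gives the sharper statement that the valuation vector lies on one of finitely many \emph{lines}, not just hyperplane complements of corank one.

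Two small points to tighten. First, the parenthetical ``no $z_i$ is constant'' is false in general (a curve in $\Gm^t$ can certainly have constant coordinates), but this is harmless: $B\neq\emptyset$ simply because $\tilde C$ is projective of positive dimension while $\Gm^t$ is affine, and each $\mathbf n_P$ is nonzero because $P\in B$ forces $\ord_P(z_i)\neq 0$ for some $i$. Second, you tacitly identify $C$ with an open subset of $\tilde C$ when extending $\underline z\in C(\KK)$ to an integral point of $\tilde{\mathcal C}$; this is only automatic at smooth points of $\bar C$. The fix is routine: the singular locus of $\bar C$ is finite, so enlarge $S$ to contain every place at which some coordinate of some singular $\KK$-point of $C$ fails to be a unit, and those finitely many $\underline z$ then satisfy the desired relation trivially, while every other $\underline z\in C(\KK)$ lifts uniquely to $\tilde C(\KK)$.
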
 
\begin{proof} Let us first consider the case $t=2$. (The case $t=1$ is evidently trivial.) The curve $C \subseteq \mathbb{G}_m^2$ can be written as the the zero set of a non-zero polynomial 
	\begin{equation*}
	F(X_1,X_2) = \sum_{i_1 = -n_1}^{n_1} \sum_{i_2=-n_2}^{n_2} a_{i_1,i_2} X^{i_1}_1 X^{i_2}_2 \in \mathbb{K}[X_1,X_2].
	\end{equation*} 
	Let $S \subseteq \Sigma_{\mathbb K, f}$ be a finite subset such that $|a_{i_1,i_2}|_\nu \in \{0,1\}$ for all $\nu \in \Sigma_{\mathbb K,f} \setminus S$ and all $i_1,i_2 \in \mathbb{Z}$.
	Fix $\underline{z} = (z_1,z_2) \in C(\mathbb{K})$ and $\nu \in \Sigma_{\mathbb K,f} \setminus S$. As
	\begin{equation*}
	F(z_1,z_2) = \sum_{i_1 = -n_1}^{n_1} \sum_{i_2=-n_2}^{n_2} a_{i_1,i_2} z^{i_1}_1 z^{i_2}_2 = 0,
	\end{equation*}
	there exist distinct pairs $(j_1,j_2)$ and $(k_1,k_2)$ such that
	\begin{equation*}
	|a_{j_1,j_2}|_\nu \neq 0, \ |a_{k_1,k_2}|_\nu \neq 0,
	\end{equation*}
	and
	\begin{equation*}
	|z_1|^{j_1}_\nu |z_2|^{j_2}_\nu = |a_{j_1,j_2} z^{j_1}_1 z^{j_2}_2|_\nu = |a_{k_1,k_2} z^{k_1}_1 z^{k_2}_2|_\nu = |z_1|_\nu^{k_1} |z_2|_\nu^{k_2}.
	\end{equation*}
	We infer a non-trivial relation
	\begin{equation*}
	(j_1-k_1) \cdot \log |z_1|_\nu + (j_2-k_2) \cdot \log |z_2|_\nu = 0.
	\end{equation*}
	Varying $z\in C(\mathbb{K})$, we obtain at most finitely many different equations of this form, one of which has to be satisfied for every point $z \in C(\mathbb{K})$ and every $\nu \in \Sigma_{\mathbb K, f}\setminus S$. Thus we obtain \eqref{equation::not_that_good_case2} in case $t=2$. 
	
	For the case $t>2$, we can assume that there exists an integer $t_0 \in \{1,\dots,t\}$, such that the projections $\mathrm{pr}_i|_C: C \rightarrow \mathbb{P}^1_\mathbb{K}$, $1\leq i \leq t_0$, are dominant, and each image $\mathrm{pr}_i(C)$, $t_0 + 1 \leq i \leq t$, is a point. We can enlarge $S \subseteq \Sigma_{\mathbb K, f}$ such that $ \log |z_{t^\prime}|_\nu = 0$, $t_0 + 1\leq t^\prime \leq t$, for all $\underline{z} \in C(\mathbb{K})$.
	
	Let $\underline{z} \in C(\mathbb{K})$ be an arbitrary point as in the assertion of the lemma. There is nothing to prove if $\log |z_i|_\nu = 0$ for all $i \in \{1,\dots,t_0 \}$. After renaming the first $t_0$ coordinates, we can therefore assume that $\log |z_1|_\nu \neq 0$. For each $t^\prime \in \{2,\dots, t\}$, applying the above to the projection $\mathrm{pr}_{1,t^\prime}(C) \subset \mathbb{G}_m^2$, we conclude that
	\begin{equation*}
	b_{1} \cdot \log |z_1|_\nu + b_{2} \cdot \log |z_{t^\prime}|_\nu = 0
	\end{equation*}
	for one of finitely many pairs $(b_1, b_2) \neq (0,0)$. In fact, we can and do assume that $b_2 \neq 0$ for each of these pairs.
	
	 For each point $\underline{z} \in C(\mathbb{K})$, we combine these $t-1$ relations to a matrix relation
	\begin{equation*}
	\mathcal{M} \cdot 
	\begin{pmatrix} \log|z_1|_\nu \\ \vdots \\ \log|z_t|_\nu \end{pmatrix}
	=
	\begin{pmatrix} 0 \\ \vdots \\ 0 \end{pmatrix}
	\end{equation*}
	where $\mathcal{M} \in \mathbb{Z}^{(t-1) \times t}$ has rank $(t-1)$. Clearly, we obtain only finitely many matrices $\mathcal{M}$ in this process, which concludes the proof of the lemma.
\end{proof}

\begin{lem}
	\label{lemma::mordell_lang}	
	For every finite subset $S \subseteq \Sigma_{\mathbb{K},f}$, the subgroup
	\begin{equation*}
	\Gamma(\mathbb{K},S) = \{ x \in G(\overline{\mathbb Q}) \ | \ \pi(x) \in A(\mathbb{K}) \ \text{and} \ \ \forall \nu \in \Sigma_{\mathbb{K},f} \setminus S: \lambda_{1,\nu}(x)=\cdots=\lambda_{t,\nu}(x)=0 \}
	\end{equation*}
	has finite rank.
\end{lem}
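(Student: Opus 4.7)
The plan is to use the extension structure $0 \to T \to G \xrightarrow{\pi} A \to 0$ to decompose the problem into a toric part and an abelian part. I would first verify that $\Gamma(\mathbb{K},S)$ is actually a subgroup of $G(\overline{\mathbb Q})$: each $\lambda_{i,\nu}$ is an additive homomorphism on $G(\mathbb{C}_\nu)$ (this is the content of \cite[Lemma 14]{Kuehne2020}, invoked earlier in the excerpt), and $\pi$ is a group homomorphism, so $\Gamma(\mathbb{K},S)$ is the intersection of kernels of homomorphisms. Since rank is additive in short exact sequences, it suffices to bound the rank of the image $\pi(\Gamma(\mathbb{K},S))$ and of the kernel $\Gamma_T := \Gamma(\mathbb{K},S) \cap T(\overline{\mathbb Q})$ separately.

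The abelian part is immediate: $\pi(\Gamma(\mathbb{K},S))$ is contained in $A(\mathbb{K})$ by the very definition of $\Gamma(\mathbb{K},S)$, and $A(\mathbb{K})$ is finitely generated by the Mordell-Weil theorem. Hence the image has finite rank.

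For the toric part $\Gamma_T$, I would use the explicit description of $\lambda_{i,\nu}$ on $T$ that can be read off from the proof of Lemma \ref{lemma::gap_filler}: up to a term that vanishes on the zero fiber, $\lambda_{i,\nu}$ restricted to $T = \mathbb{G}_m^t$ coincides with $-\log|z_i|_\nu$ for the $i$-th standard toric coordinate. The defining conditions of $\Gamma(\mathbb{K},S)$ then force each coordinate $z_i$ to have unit absolute value at every finite place of $\overline{\mathbb K}$ above $\Sigma_{\mathbb K,f} \setminus S$. Dirichlet's $S$-unit theorem, suitably packaged for the relevant field of definition, should give a finite-rank bound on $\Gamma_T$. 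This is, in spirit, the Mordell-Weil-type statement for $S$-integral points on the split torus $T$, analogous to what Vojta \cite{Vojta1996} does for semiabelian $S$-integral points.

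The main obstacle I anticipate is making the $S$-unit argument rigorous in the $\overline{\mathbb Q}$-setting, since naively the group of $S$-units of $\overline{\mathbb Q}^*$ is of infinite rank (e.g., fundamental units of pairwise distinct real quadratic fields are $\mathbb Z$-linearly independent modulo torsion). I expect the resolution to use the Galois-invariant flavor of the conditions defining $\Gamma(\mathbb{K},S)$, together with the extension constraint $\pi(x) \in A(\mathbb{K})$, to show that after multiplication by a fixed positive integer each class in $\Gamma(\mathbb{K},S)/\text{tors}$ is represented by a point of $G(\mathbb K)$; classical Dirichlet and Mordell-Weil for the number field $\mathbb{K}$ then close the argument.
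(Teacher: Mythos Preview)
Your overall strategy---split along the extension $0\to T\to G\to A\to 0$, bound the abelian image by Mordell--Weil and the toric part by Dirichlet's $S$-unit theorem---is exactly what the paper does. The paper packages it slightly differently: it lifts generators $\gamma_1,\dots,\gamma_n$ of $A(\KK)$ to points $\gamma_1',\dots,\gamma_n'\in G(\KK)$, enlarges $S$ so that $\lambda_{i,\nu}(\gamma_j')=0$ for all $\nu\notin S$, and then observes the inclusion $\Gamma(\KK,S)\subseteq \Gamma'(S)+\Z\gamma_1'+\cdots+\Z\gamma_n'$, where $\Gamma'(S)$ is the group of $S$-units of $\KK$ inside $\Gm^t$. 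This is your kernel/image argument in different clothing.

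You are right to flag the $\QQ$-issue, but your proposed Galois-descent fix does not work. Take $G=\Gm$, $\KK=\Q$, $S=\emptyset$: every fundamental unit $\epsilon_d$ of a real quadratic field $\Q(\sqrt{d})$ is a global unit, hence satisfies $\lambda_{1,\nu}(\epsilon_d)=-\log|\epsilon_d|_\nu=0$ at every finite $\nu$ (for every embedding into $\C_\nu$), so $\epsilon_d\in\Gamma(\Q,\emptyset)$; these units are multiplicatively independent modulo torsion, and no power of any $\epsilon_d$ lies in $\Q^\ast$. Thus the group literally defined with $x\in G(\QQ)$ has infinite rank and there is no fixed integer $e$ with $[e](\Gamma/\mathrm{tors})\subseteq G(\KK)$. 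The paper's proof, read carefully, only establishes the displayed inclusion for $x\in G(\KK)$ (the lifts $\gamma_j'$ are taken in $G(\KK)$ and the cited Dirichlet theorem is over $\KK$), and this is all that is used downstream: the lemma is applied only to $C(\KK)\cap\Gamma(\KK,S)$ in the proof of Proposition~\ref{lemfink}. So rather than trying to rescue the $\QQ$-statement, just prove and use the $G(\KK)$-version; your argument then goes through without the final paragraph.
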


\begin{proof}
	By the Mordell-Weil theorem (see e.g.\ \cite[Appendix II]{Mumford1970}), the group $A(\mathbb K)$ is finitely generated. Let $\gamma_1,\dots,\gamma_n$ be generators of $A(\mathbb K)$. For each $\gamma_j$, $1\leq j \leq n$, we can find a preimage $\gamma^\prime_j \in \pi^{-1}(\gamma_j) \in G(\mathbb{K})$. Possibly enlarging $S$, we may additionally assume that $\lambda_{i,\nu}(\gamma^\prime_j)=0$ for all $\nu \in \Sigma_{\mathbb{K},f} \setminus S$, $1\leq i\leq t$ and $1\leq j \leq n$. Let additionally $\Gamma^\prime(S) \subseteq \mathbb{G}_m^t(\overline{\mathbb Q})$ be the $S$-units in the maximal torus of $G$. By Dirichlet's $S$-unit theorem \cite[Corollary I.11.7]{Neukirch1999}, the group $\Gamma^\prime(S)$ is finitely generated. As
	\begin{equation*}
	\Gamma(\mathbb{K},S) \subseteq \Gamma' (S)+ \mathbb{Z} \cdot \gamma_1^\prime + \cdots + \mathbb{Z} \cdot \gamma_n^\prime,
	\end{equation*}
	the group $\Gamma(\mathbb{K},S)$ is likewise finitely generated.
\end{proof}

With the preceding preparations, we can finally come back to the main result of this subsection.

\begin{proof}[Proof of Proposition \ref{lemfink}]
	Replacing $C$ with $[e](C)$, we may assume that $e=1$. This means that we have to show that the set $(C \cap G^{[2]})(\mathbb{K})$ is finite. We can also assume that $C$ is not a translate $H_0 + p$ where $H_0 \subseteq G$ is a subgroup (of dimension $1$) and $p \in G(\overline{\mathbb{Q}})$. In fact, if such a translate $H_0 + p$ intersects a subgroup $H \subseteq G$ of codimension $2$, then $H_0 + p \subseteq H_0+ H \subseteq G^{[1]}$. By Lemma \ref{lemma::mordell_lang} and the Mordell-Lang conjecture for semiabelian varieties that was proven by Faltings \cite{Faltings1994}, McQuillan \cite{McQuillan1995}, and Vojta \cite{Vojta1996}, the set $C(\mathbb{K}) \cap \Gamma(\mathbb{K},S)$ is finite for every finite subset $S \subseteq \Sigma_{\mathbb{K},f}$. Furthermore, the proposition follows from Faltings' proof of the Mordell conjecture \cite{Faltings1983} if $t=0$ (i.e., $G$ is an abelian variety).

	We prove the general case of the proposition by induction on $t$. For the inductive step, it remains to prove that $(C \cap G^{[2]})(\mathbb{K}) \setminus \Gamma(\mathbb{K},S)$ is finite. 
	
	Let $S \subseteq \Sigma_{\mathbb{K},f}$ and $\mathcal{M}^{(\alpha)} \in \mathbb{Z}^{(t-1) \times t}$, $\alpha \in \mathcal{I}$, be as in Lemma \ref{lemma::gap_filler} above. Each matrix $\mathcal{M}^{(\alpha)}$ determines a $1$-dimensional subgroup $H^{(\alpha)} \subseteq \mathbb{G}_m^t$ of the maximal torus and thus a quotient homomorphism $\varphi^{(\alpha)}: G \rightarrow G/H^{(\alpha)}$ to a semiabelian variety $G^{(\alpha)} = G/H^{(\alpha)}$. The image $\varphi^{(\alpha)}(C)$ is an irreducible algebraic subcurve of $G^{(\alpha)}$; for else $C$ is a translate of the subgroup $H^{(\alpha)}$. In addition, the curve $\varphi^{(\alpha)}(C)$ is not contained in $(G^{(\alpha)})^{[1]}$ or else $C \subseteq G^{[1]}$, which contradicts our assumptions. By our inductive assumption, we already know that the set $(\varphi^{(\alpha)}(C)\cap (G^{(\alpha)})^{[2]})(\KK) $ is finite.
	
	Consider now a subgroup $H \subseteq G$ of codimension $\geq 2$ containing a point $x \in C(\mathbb{K}) \setminus \Gamma(\mathbb{K},S)$. There exists some $\nu_0 \in \Sigma_{\mathbb{K},f} \setminus S$ such that $(\lambda_{1,\nu_0}(x), \dots, \lambda_{t,\nu_0}(x))$ is non-zero. Note that this implies that the subgroup $H$ has a non-trivial maximal subtorus $T \subseteq \mathbb{G}_m^{t}$; in fact, otherwise every point $x \in H(\mathbb{K})$ would satisfy $\lambda_{1,\nu_0}(x)= \cdots = \lambda_{t,\nu_0}(x)=0$. There exists some $\alpha_0 \in \mathcal{I}$ such that
	\begin{equation*}
	\mathcal{M}^{(\alpha_0)} \cdot 
	\begin{pmatrix} \lambda_{1,\nu_0}(x) \\ \vdots \\ \lambda_{t,\nu_0}(x) \end{pmatrix}
	=
	\begin{pmatrix} 0 \\ \vdots \\ 0 \end{pmatrix}.
	\end{equation*}
	As $\mathcal{M}^{(\alpha_0)}$ has maximal rank $t-1$, any $(a_1,\dots,a_t) \in \mathbb{Z}^t$ such that 
	\begin{equation*}
	(a_1,\dots,a_t)  \cdot \begin{pmatrix} \lambda_{1,\nu_0}(x) \\ \vdots \\ \lambda_{t,\nu_0}(x) \end{pmatrix} = 0
	\end{equation*}
	is a $\mathbb{Q}$-linear combination of rows in $\mathcal{M}^{(\alpha_0)}$. In particular, this is true for the equations describing the subtorus $T \subseteq \mathbb{G}_m^t$. We deduce that $H^{(\alpha_0)} \subseteq T \subseteq H$ so that $\varphi^{(\alpha_0)}(H)$ is a subgroup of codimension $\geq 2$ in $G^{(\alpha_0)}$. Thus, there are only finitely many possible choices for $\varphi^{(\alpha_0)}(x) \in \varphi^{(\alpha_0)}(C)(\KK) \cap (G^{(\alpha_0)})^{[2]}$ -- independent of the subgroup $H \subseteq G$. As $\varphi^{(\alpha_0)}|_C : C \rightarrow \varphi^{(\alpha_0)}(C)$ is finite, this leaves only finitely many possibilities for $x \in C(\KK)$. We conclude that $(C \cap G^{[2]}) (\mathbb{K})\setminus \Gamma(\mathbb{K},S)$ is finite.	
\end{proof}

\section{Unlikely intersections of bounded height}\label{boundedheight}

In this section, we prove the following intermediate result towards Theorem \ref{mainthm}. Its proof is based on o-minimal counting techniques and modeled after \cite{HP}. 

\begin{prop} \label{lemfinbh}
	Let $\cC$ be an irreducible curve in $G$ not contained in $G^{[1]}$. Then there are at most finitely many points in $ (\cC \cap G^{[2]})(\QQ)$ of bounded height. 
\end{prop}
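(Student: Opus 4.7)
The plan is to follow the o-minimal counting strategy of Habegger and Pila \cite{HP}, adapted to the semiabelian setting via the estimates of Section \ref{preliminaries}. Suppose for contradiction that there is an infinite sequence $(p_n)_{n \geq 1}$ of pairwise distinct points in $(\cC \cap G^{[2]})(\QQ)$ with $h_L(p_n) \leq B$. For each $n$, fix a connected algebraic subgroup $H_n \subseteq G$ of codimension $\geq 2$ with $p_n \in H_n$. By Lemma \ref{lemgalois1}, after a harmless finite base change each $H_n$ is defined over $\KK$, so the full Galois orbit $\Gal(\QQ/\KK) \cdot p_n$ lies in $\cC \cap H_n$. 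Since $h_L$ restricts to a Northcott height on $\cC$, I may pass to a subsequence with $d_n := [\KK(p_n):\KK] \to \infty$. Using the Structure Theorem \ref{ST} together with Theorem \ref{BHT}, I may moreover discard finitely many $p_n$ contained in anomalous subvarieties of $\cC$ and arrange $\deg_L(\overline{H_n}) \leq D$ for a uniform $D$.

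Next, I pass to the universal cover using the decomposition $\mathrm{Lie}(G) = V_G \oplus I V_{\Gm^t}$ from \eqref{equation::lie_decomposition}. Lemma \ref{lemma::norm_estimate} furnishes, for each Galois conjugate $p_n^\sigma$, a preimage $v_n^\sigma \in \mathrm{Lie}(G)$ of norm $\ll_{G,L} d_n B$, and Lemma \ref{lemma::minkowski}(2) lets me adjust $v_n^\sigma$ by a period of comparable norm so that $v_n^\sigma \in \mathrm{Lie}(H_n)$. Combined with Lemmas \ref{lemma::covolume_degree} and \ref{lemma::minkowski}(1), this produces, for each orbit, $d_n$ pairs $(v_n^\sigma, \mathrm{Lie}(H_n))$ whose coordinates — in a fixed $\Z$-basis of $\Omega_G$ for the $V_G$-factor and in Plücker coordinates for the codimension-$2$ subspace — are rational of height polynomially bounded in $d_n B D$.

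Following \cite{HP}, I encode this data in a set definable in $\R_{\mathrm{an},\exp}$, of the form
\begin{equation*}
Z := \bigl\{ (v, W) \in \mathrm{Lie}(G) \times \mathrm{Gr}^{\R}_{\dim G - 2}(\mathrm{Lie}(G)) \,:\, \exp_G(v) \in \cC,\ v \in W \bigr\},
\end{equation*}
where definability is ensured by choosing a bounded fundamental domain for $\Omega_G$ inside $V_G$ and using that $\exp_G$ factors as a real-analytic abelian exponential paired with the ordinary complex exponential. The refined Pila–Wilkie theorem \cite{PW, HP} applied to $Z$ leaves two alternatives. If all orbit points lie in the transcendental part $Z^{\mathrm{trans}}$, then the inequality $d_n \ll_\epsilon (d_n B D)^\epsilon$, valid for every $\epsilon > 0$, yields a uniform bound on $d_n$, contradicting $d_n \to \infty$. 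Otherwise, a positive-dimensional semi-algebraic block $S \subseteq Z$ absorbs infinitely many orbit points; the projection of $S$ to $\mathrm{Lie}(G)$ yields a real-semi-algebraic arc whose $\exp_G$-image is an open piece of $\cC$ contained in a coset of a proper real subspace of $\mathrm{Lie}(G)$. Ax's theorem \cite{Ax} then forces $\cC$ into a proper algebraic subgroup of $G$, contradicting the hypothesis. The finitely many orbit points that land in a torsion-coset component of $Z$ rather than in $Z^{\mathrm{trans}}$ are absorbed by Proposition \ref{lemfink}.

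The main obstacle is extracting an \emph{algebraic} geometric constraint on $\cC$ from the \emph{real-semi-algebraic} output of Pila–Wilkie — the role of Ax's functional transcendence theorem. The specifically semiabelian difficulty lies in the mixed compact/non-compact structure of $\mathrm{Lie}(G)$, which requires the canonical splitting and metric of Subsection \ref{subsection::degrees}, and in making the period normalization of Lemma \ref{lemma::minkowski} compatible with the definability framework. Lemma \ref{lemfink} plays the essential auxiliary role of absorbing the torsion-coset scenarios that Ax's theorem does not directly address.
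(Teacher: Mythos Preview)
Your overall strategy (o-minimal counting against Galois orbits, then Ax) matches the paper's, but there is a genuine gap in the arithmetic input. You assert that via the Structure Theorem and Theorem \ref{BHT} you may ``arrange $\deg_L(\overline{H_n}) \leq D$ for a uniform $D$.'' This does not follow: Theorem \ref{BHT} bounds heights of points in $V^{\mathrm{oa}}\cap G^{[\dim V]}$, not degrees of the subgroups realizing the intersection; and for a curve the hypothesis $\cC\not\subseteq G^{[1]}$ does not even guarantee $\cC^{\mathrm{oa}}\neq\emptyset$, since $\cC$ may well lie in a proper \emph{coset}. No uniform degree bound on the subgroups is available. The paper instead works with the \emph{smallest} torsion coset $\langle p\rangle=q+H_p$ containing $p$ and proves a polynomial complexity bound: $\mathrm{ord}(q)$ via Lemma \ref{lemma::arithmetic_complexity} and $\deg_L(\overline{H_p})$ via Lemma \ref{lemma::geometric_complexity}, the latter being an application of Gaudron's linear forms in logarithms on semiabelian varieties. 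Under the bounded-height hypothesis this gives $\Delta(\langle p\rangle)\ll d_p^{c}$, and it is precisely this polynomial-in-$d_p$ control (not a uniform bound) that makes the Pila--Wilkie count lose against the Galois orbit of size $d_p$. Without it the height of your rational data in $Z$ is uncontrolled and the counting step collapses.

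Two smaller points. First, a point of $G^{[2]}$ lies in a possibly disconnected algebraic subgroup, i.e.\ in a torsion coset $q+H$ with $H$ connected; you cannot take $p_n\in H_n$. The paper tracks the torsion part through the rational vector $r_\sigma$, and the bound on $\mathrm{ord}(q^\sigma)$ is part of the complexity estimate above. Second, in the paper's organization neither Theorem \ref{BHT} nor Proposition \ref{lemfink} enters the proof of this proposition at all; both are used only later, in Section 5. Your final sentence invoking Proposition \ref{lemfink} to ``absorb torsion-coset scenarios'' has no analogue here --- once the complexity bound is in place, Ax's theorem (Lemma \ref{lem::Ax}) applied to the curve $\phi(\cC)\subseteq G/H_p$ directly forces $\cC\subseteq G^{[1]}$, and that is the contradiction.
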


In the sequel, $c_1,c_2,\dots$ denote positive constants that only depend on the semiabelian variety $G$ and on the curve $C$. We also assume throughout this section that the number field $\KK$ is sufficiently large such that all connected algebraic subgroups of $G$ are invariant under $\Gal(\overline{\KK}/\KK)$, which we may do by Lemma \ref{lemgalois1}.

\subsection{Some complexity estimates}

For each point $p \in G(\QQ)$, we write $\langle p\rangle=q +H_p $ for the smallest torsion coset of $G$ containing $p$, where $H_p$ is a connected algebraic subgroup of $G$ and $q \in \mathrm{Tors}(G)$. 

We define the \emph{complexity} of a torsion coset $q+H$ to be
$$
\Delta(q+H)=\max \{ \min\{\ord(q_0):q_0-q \in H,q_0 \in \mathrm{Tors}(G) \},\deg_L(\overline{H})\}
$$
for later use. We recall that $L$ is an ample line bundle on $\overline{G}$ that we have fixed before Lemma \ref{lemma::covolume_degree}.

The following two lemmas will allow us to bound $\Delta(\langle p\rangle)$ in terms of the degree of $p$ and its height.

\begin{lem}
\label{lemma::arithmetic_complexity}
	For each point $p \in G(\QQ)$, there is a torsion point $q \in \mathrm{Tors}(G)$ of order
	\begin{equation*}
		\ord(q) \leq c_1 [\KK(p):\KK]^{c_2}
	\end{equation*}
	such that $p \in q + H_p$.
\end{lem}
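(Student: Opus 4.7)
The plan is to reduce the bound on $\ord(q)$ to a bound on the order of $\chi(p)$, where $\chi \colon G \twoheadrightarrow G/H_p$ is the quotient morphism. By Lemma \ref{lemgalois1} and the standing assumption that $\KK$ is sufficiently large, the connected subgroup $H_p$ is defined over $\KK$, so $\chi$ is a $\KK$-morphism and $\chi(p) \in (G/H_p)(\KK(p))$. Because $\langle p\rangle = q + H_p$ with $q$ torsion, we have $\chi(p) = \chi(q)$, which is a torsion point of $G/H_p$; denote its order by $N$.

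Next, I would invoke a uniform version of Masser's bound on torsion orders in semiabelian varieties: for every quotient $G' = G/H$ of $G$ by a connected $\KK$-subgroup $H$ and every torsion point $t \in G'(\mathbb{L})$, one has $\ord(t) \leq c\,[\mathbb{L}:\KK]^{\kappa}$ with constants $c,\kappa$ depending only on $G$ and $\KK$. The abelian contribution to this bound follows from Masser's theorem on Galois orbits of torsion points (with the semiabelian refinement due to David), applied to the abelian quotient of $G/H$, which is itself a quotient of the fixed abelian variety $A$. The toric contribution follows from the elementary estimate $[\KK(\mu_N):\KK] \gg_{\KK} \phi(N) \gg N^{1/2}$. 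Applied to $G' = G/H_p$ with $\mathbb{L} = \KK(p)$, this yields $N \leq c_1 [\KK(p):\KK]^{c_2}$ for constants depending only on $G$ and $\KK$.

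Finally, I would lift $\chi(p)$ to a torsion point $q \in G(\QQ)$ of order $N$ using the divisibility of $G(\QQ)$: for any preimage $q' \in G(\QQ)$ of $\chi(p)$, the point $[N]q' \in H_p(\QQ)$ admits an $N$-th root $h \in H_p(\QQ)$, so $q = q' - h$ is $N$-torsion and projects to $\chi(p)$. Then $p - q \in H_p$, hence $p \in q + H_p$, and $\ord(q) \leq N \leq c_1 [\KK(p):\KK]^{c_2}$, as required.

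The main obstacle is establishing the uniform Masser-type bound over the infinite family of quotients $G/H$ — in particular, ensuring that the constants $c,\kappa$ do not blow up as $H$ ranges over connected $\KK$-subgroups of $G$. This rests on the fact that the abelian part of $G/H$ is an isogeny factor of the fixed abelian variety $A$ (so only finitely many isogeny types appear, since $\mathrm{End}^0(A)$ is a fixed finite-dimensional $\Q$-algebra) while the toric part is controlled by cyclotomic estimates that depend only on $\KK$.
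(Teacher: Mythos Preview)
Your overall plan---pass to the quotient $\chi: G \to G/H_p$, bound the order $N$ of the torsion point $\chi(p)$, then lift back to a torsion point of $G$ via divisibility---matches the paper's approach, and your separation into an abelian and a toric contribution is exactly how the paper proceeds. The lifting step is correct (the paper leaves it implicit, simply observing that it suffices to bound the order of $\varphi_{H_p}(p)$).

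The gap is in your uniformity argument for the abelian part. You propose to apply Masser's torsion bound directly to the abelian quotient $A/B$ of $G/H_p$ and to control the constants via the observation that ``only finitely many isogeny types appear''. But Masser's constant depends on the specific abelian variety---for instance through its Faltings height---and not merely on its isogeny class. Since the Faltings height is unbounded within an isogeny class, knowing that the $A/B$ fall into finitely many isogeny classes does not by itself bound the Masser constants. You have correctly identified this as the main obstacle, but the resolution you offer is insufficient as stated.

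The paper avoids this entirely by invoking \cite[Lemma~9.3]{HP}, which is precisely the abelian analogue of the present lemma: its input is the point $\pi_G(p)$ in the \emph{fixed} abelian variety $A$, and it outputs a bound on the order $m_1$ of the image in $A/B$, with constants depending only on $A$ and $\KK$. One then replaces $p$ by $[m_1](p)$ so that $\varphi_{H_p}([m_1]p)$ lies in the maximal torus $\Gm^t/T$ of $G/H_p$; since this torus is $\KK$-split, the cyclotomic estimate you mention finishes the argument. The key point is that Masser's theorem is only ever applied to the single abelian variety $A$, never to the varying quotients $A/B$.
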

\begin{proof}
There is a commutative diagram of homomorphisms
	\begin{equation*} 
\begin{tikzcd}
0 \ar[r] & T \ar[r] \ar[d, hook] & H_p \ar[r] \ar[d, hook] & B \ar[d, hook] \ar[r] & 0 \\0 \ar[r] & \Gm^t \ar[r] \ar[d, ->>] & G \ar[r, "\pi_G"] \ar[d, "\varphi_{H_p}", ->>] & A  \ar[d, "\varphi_B", ->>] \ar[r] & 0 \\
0 \ar[r] & \Gm^t/T \ar[r] & G/H_p \ar[r, "\pi_{G/H_p}"] & A/B \ar[r] & 0
\end{tikzcd}
\end{equation*}
with exact rows and columns (compare \cite[Lemma 1]{Kuehne2020}). By definition, $\varphi_{H_p}(p)$ is a torsion point, and we have to bound its order in terms of $[\KK(p):\KK]$. By \cite[Lemma 9.3]{HP}, we know that $\varphi_B(\pi_G(p))$ is a torsion point of order 
\begin{equation*}
m_1 \leq c_3 [\KK(\pi_G(p)):\KK]^{c_4} \leq c_3 [\KK(p):\KK]^{c_4}.
\end{equation*}
Consider the point $p^\prime = [m_1](p) \in G(\QQ)$. Its image $\varphi_{H_p}(p^\prime)$ is a torsion point in $G/H_p$. Furthermore, it is contained in the maximal torus $\Gm^t/T$ of $G/H_p$ as
\begin{equation*}
\pi_{G/H_p}(\varphi_H(p^\prime))=\varphi_B(\pi_G(p^\prime))=[m_1](\varphi_B(\pi_G(p))) = 0_{A/B}.
\end{equation*} 
Being a quotient of a $\KK$-split torus $\Gm^t$, the torus $\Gm^t/T$ is also $\KK$-split (i.e., isomorphic over $\KK$ to some $\Gm^{t^\prime}$). Using the elementary structure of cyclotomic fields, we infer hence that the torsion point $\varphi_{H_p}(p^\prime)$ has order 
\begin{equation*}
m_2 \leq c_5 [\KK(\varphi_{H_p}(p^\prime)):\KK]^{c_6} \leq c_5 [\KK(p):\KK]^{c_6} .
\end{equation*}
As $\varphi_{H_p}(p)$ has order $\leq m_1 m_2$, this concludes the proof.
\end{proof}

\begin{lem}
\label{lemma::geometric_complexity}
For each point $p \in G(\QQ)$, we have
\begin{equation*}
\deg_L({H}_p) \leq c_7 [\KK(p):\KK]^{c_8} \max \{1, h_L(p)^{c_9} \}
\end{equation*}
\end{lem}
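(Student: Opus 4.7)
I would follow the strategy of \cite[Lemma 3.3]{HP}, adapted to the semiabelian setting using the tools of Section~\ref{preliminaries}. First, apply Lemma~\ref{lemma::arithmetic_complexity} to obtain a torsion point $q$ of order $n\leq c_1[\KK(p):\KK]^{c_2}$ with $p':=[n]p\in H_p(\QQ)$. Standard comparisons between $h_L$ and the canonical height $\hhat_M$ (as in the proof of Lemma~\ref{lemma::norm_estimate}), combined with $\hhat_M([n]p)=n^{2}\hhat_M(p)$, give a polynomial bound on $h_L(p')$ in terms of $h_L(p)$ and $[\KK(p):\KK]$, while $[\KK(p'):\KK]\leq[\KK(p):\KK]$. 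Moreover, $H_p$ is the smallest connected algebraic subgroup of $G$ containing $p'$, so it suffices to bound $\deg_L(\overline{H_p})$ polynomially in $[\KK(p'):\KK]$ and $h_L(p')$.

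Next, by Lemma~\ref{lemma::norm_estimate}, $p'$ admits a preimage $v'\in\mathrm{Lie}(G)$ of norm $\|v'\|\leq c_5[\KK(p'):\KK]\max\{1,h_L(p')\}$, hence polynomially bounded in $[\KK(p):\KK]$ and $h_L(p)$. Since $p'\in H_p$, we have $v'\in\mathrm{Lie}(H_p)+\Omega_G$, and Lemma~\ref{lemma::minkowski}(2) yields a period $\omega\in\Omega_G$ with $w:=v'-\omega\in\mathrm{Lie}(H_p)$. Via Lemma~\ref{lemma::covolume_degree}, $\deg_L(\overline{H_p})$ is comparable to $\mathrm{covol}_{g_G}(\Omega_{H_p}\subset V_{H_p})$, which by Minkowski's successive minima theorem is controlled by the product of the successive minima of $\Omega_{H_p}$, or equivalently by the length of a short basis thereof (cf.\ Lemma~\ref{lemma::minkowski}(1)).

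The main obstacle is the final step: showing that the successive minima of $\Omega_{H_p}$ are polynomially bounded in $\|w\|$. This relies on the minimality of $H_p$: if some successive minimum were too large, one could find a proper connected algebraic subgroup of $H_p$ still containing $p'$, contradicting the definition of $H_p$. In the purely abelian case \cite[Lemma 3.3]{HP} the argument is clean because $\mathrm{Lie}(A)=V_A$ carries a full lattice. In the semiabelian setting one must separately track the ``modulus'' part $v'_1\in IV_{\Gm^{t}}$, which cuts out the maximal subtorus of $H_p$ through integer relations (in the spirit of Lemmas \ref{lemma::gap_filler} and \ref{lemma::monomial}), and the ``periodic'' part $v'_2\in V_G$, which fixes the abelian quotient and the period lattice. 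The canonical splitting provided by Lemma~\ref{lemma::splitting} is what allows one to recombine the toric and abelian contributions compatibly and arrive at the desired polynomial estimate.
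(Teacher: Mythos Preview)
Your proposal diverges from the paper at the decisive step. After the same reduction to $p\in H_p$ and the same choice of a bounded logarithm $v$ via Lemma~\ref{lemma::norm_estimate}, the paper does \emph{not} attempt a successive-minima argument for $\Omega_{H_p}$. Instead it invokes Gaudron's theorem on linear forms in logarithms of semiabelian varieties \cite[Th\'eor\`eme~1]{Gaudron2005}: one fixes a basis $\omega_1,\dots,\omega_{2g+t}$ of $\Omega_G$, writes $v-\omega\in\mathrm{Lie}(H_p)$ with $\omega=\sum n_i\omega_i$, encodes this as a linear condition on the logarithm $(v,\omega_1,\dots,\omega_{2g+t})$ of $(p,0_{G^{2g+t}})\in G\times G^{2g+t}$, and obtains obstruction subgroups $\widetilde{\mathbf G}_i$ of degree polynomial in $h_L(p)$ and $[\KK(p):\KK]$. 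B\'ezout then bounds $\deg_L(H_p)$, since $H_p\times\{0\}$ is the identity component of $(\bigcap_i\widetilde{\mathbf G}_i)\cap(G\times\{0\})$. The inputs here are $\|v\|$ and the fixed $\|\omega_i\|$; one never needs to control $\|v-\omega\|$.

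Your route has a genuine gap at precisely the step you label ``the main obstacle''. First, the appeal to Lemma~\ref{lemma::minkowski}(2) is circular: the bound it gives on $\|\omega\|$, hence on $\|w\|=\|v'-\omega\|$, already contains the term $c_4\deg_L(\overline{H_p})$, the very quantity you want to bound. Second, and more fundamentally, the implication ``large successive minimum of $\Omega_{H_p}$ $\Rightarrow$ some proper connected algebraic subgroup of $H_p$ still contains $p'$'' is not a consequence of lattice geometry. Minimality of $H_p$ is an arithmetic constraint on which $\C$-subspaces of $\mathrm{Lie}(G)$ arise as tangent spaces of algebraic subgroups; tying it quantitatively to the metric of $\Omega_{H_p}$ is exactly the content of the Masser--W\"ustholz period theorem in the abelian case and of Gaudron's result here. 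The paper in fact remarks that no such statement for semiabelian varieties was available in the literature, which is why it falls back on linear forms in logarithms. Your references to Lemmas~\ref{lemma::gap_filler} and~\ref{lemma::monomial} concern $\nu$-adic valuation patterns along a curve and do not supply this transcendence input. (As a side remark, since $\overline{[2]}^\ast M=M^{\otimes 2}$ the height $\hhat_M$ is linear rather than quadratic under $[n]$, so $\hhat_M([n]p)=n\,\hhat_M(p)$; this does not affect the shape of the reduction.)
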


The assertion of the lemma is well known from the transcendence theory of commutative algebraic groups. For abelian varieties, it is proven in \cite[Théorème 1.4]{Bosser2019} -- with completely explicit constants $c_7, c_8, c_9$ -- relying on a theorem of Bertrand \cite[Théorème 2]{Bertrand1995}, polarization estimates due to Gaudron and Rémond \cite{Gaudron2014}, as well as other tools from transcendental number theory. Unfortunately, such a result does not seem to be in the literature for semiabelian varieties. Therefore, we derive the lemma directly from linear forms in logarithms of semiabelian varieties. In this way, the asserted degree bound follows from the bound on the degree of an obstruction subgroup.

\begin{proof}
We first fix a basis $\omega_1,\dots,\omega_{2g+t}$ of the period lattice $\Omega_G$.

Let $p \in G(\QQ)$. We recall that $\langle p \rangle= q+H_p $ for some $q\in \mathrm{Tors}(G)$ which implies $\langle p -q\rangle= H_p $. Using Lemma \ref{lemma::arithmetic_complexity} and easy estimates for the height (recall that torsion points have uniformly bounded height) we may replace $p$ by $p-q$ and therefore assume $\langle p \rangle= H_p $, so $p\in H_p(\QQ)$.

By the assumption on $\KK$, all connected algebraic subgroups of $G$ are defined over $\KK$. The tangent space $\mathrm{Lie}(H_p)$ is hence defined over $\KK$. Lemma \ref{lemma::norm_estimate} yields a preimage $v$ of $p$ under the group exponential $\exp_G$ such that 
\begin{equation*}
	\Vert v \Vert \ll_G [\KK(p):\KK] \max \{1, h_L(p) \}.
\end{equation*}
We do not necessarily have $v \in \mathrm{Lie}(H_p)$, but there exists a period $\omega = n_1 \omega_1 + \cdots + n_{2g+t} \omega_{2g+t} \in \Omega_G$ such that $v - \omega \in \mathrm{Lie}(H_p)$. We consider the homomorphism 
\begin{equation*}
	\psi: G \times G^{2g+t} \longrightarrow G, \ (x_0, x_1, \dots, x_{2g+t}) \longmapsto x_0 +[-n_1](x_1) + \cdots + [-n_{2g+t}](x_{2g+t}).
\end{equation*}
Our choice of $\psi$ is such that
\begin{equation*}
	\mathrm{Lie}(\psi)(v, \omega_1, \dots, \omega_{2g+t}) = v - n_1 \omega_1 - \cdots - n_{2g+t} \omega_{2g+t} = v - \omega \in \mathrm{Lie}(H_p).
\end{equation*}
We now let $U$ be the $\KK$-subspace $\mathrm{Lie}(\psi)^{-1}(\mathrm{Lie}(H_p))$ of $ \mathrm{Lie}(G \times G^{2g+t})$. Then, we can find $\KK$-subspaces $W_1,\dots, W_m\subseteq \mathrm{Lie}(G \times G^{2g+t})$ each of codimension 1 such that $U=\bigcap_{i=1}^{m}W_i$.

We apply  \cite[Théorème 1]{Gaudron2005} $m$ times with
\begin{enumerate}
	\item the semiabelian variety $\mathbf G$ therein being the product $G \times G^{2g+t}$,
	\item $\mathbf p$ being the point $(p,0_{G^{2g+t}}) \in \mathbf G(\KK)$,
	\item $\mathbf u$ being its logarithm $(v,\omega_1,\dots,\omega_{2g+t})$,
	\item $W$ being one of the codimension-1 $\KK$-subspaces $W_i$, $i=1,\dots, m$, and
	\item the real parameters $E, D, a$ being set such that
	\begin{equation*}
		E = e, \ D=[\KK:\mathbb Q], \ \log(a) = \max \left\{ 1, h_L(p), \frac{e^2(\Vert v \Vert^2 + \Vert \omega_1 \Vert^2 + \cdots + \Vert \omega_{2g+t} \Vert^2)}{D}\right\},
	\end{equation*}
\end{enumerate}
we obtain connected algebraic subgroups $\widetilde{\mathbf G}_i$ with $(p,0_{G^{2g+t}})  \in \widetilde{\mathbf G}_i(\KK) \subseteq (G \times G^{2g+t})(\KK)$ and
\begin{enumerate}
	\item $(v,\omega_1,\dots, \omega_{2g+t}) \in \mathrm{Lie}(\widetilde{\mathbf{G}}_i)$,
	\item $\mathrm{Lie}(\widetilde{\mathbf{G}}_i) \subseteq W_i$, and
	\item $\deg_{L^\prime}(\widetilde{\mathbf G}_i) \leq c_{10} [\KK(p):\KK]^{c_{11}} h_L(p)^{c_{12}}$ where $L^\prime = \pr_0^\ast L \otimes \cdots \otimes \pr_{2g+t}^\ast L$.
\end{enumerate}
Since $(p,0_{G^{2g+t}}) \in \bigcap_{i=1}^m \widetilde{\mathbf G}_i(\KK)$ and $H_p=\langle p\rangle$ we have $H_p\times \{ 0_{G^{2g+t}} \}\subseteq \bigcap_{i=1}^m \widetilde{\mathbf G}_i$. But (2) implies that $\bigcap_{i=1}^m\mathrm{Lie}(\widetilde{\mathbf{G}}_i) \subseteq \bigcap_{i=1}^m W_i=\mathrm{Lie}(\psi)^{-1}(\mathrm{Lie}(H_p))=U$ and thus
$$
\bigcap_{i=1}^m\mathrm{Lie}(\widetilde{\mathbf{G}}_i) \cap \mathrm{Lie}(G \times \{ 0_{G^{2g+t}} \})\subseteq  U\cap \mathrm{Lie}(G \times \{ 0_{G^{2g+t}} \})\subseteq \mathrm{Lie}(H_p \times \{ 0_{G^{2g+t}} \})
$$
Thus, $H_p \times \{0_{G^{2g+t}}\}$ is the identity component of the algebraic subgroup $\bigcap_{i=1}^m\widetilde{\mathbf G}_i \cap (G \times \{0_{G^{2g+t}}\})$. Using Bézout's Theorem (see e.g.\ \cite[Corollary 2.26]{Vogel}) for the Segre embedding, we can hence deduce that
\begin{align*}
	\deg_L(H_p) 
	&= \deg_{L^\prime}(H_p \times \{ 0_{G^{2g+t}}\}) \\
	&\leq  \prod_{i=1}^m \deg_{L^\prime}(\widetilde{\mathbf G}_i) \deg_{L^\prime}(G \times \{ 0_{G^{2g+t}} \}) \\
	&\ll_G [\KK(p):\KK]^{c_8}\max \{1, h_L(p) \}^{c_9}. \qedhere
\end{align*}
\end{proof}

By Lemma \ref{lemma::arithmetic_complexity} and \ref{lemma::geometric_complexity}, we have 
\begin{equation}
\label{lemma::complexity}
	\Delta(\langle p \rangle) \leq c_{13} [\KK(p):\KK]^{c_{14}} \max \{1, h_L(p)^{c_{15}} \}
\end{equation}
for each point $p \in G(\QQ)$.

\subsection{Definability of the exponential map}\label{Subs:Def}

The following lemma prepares our application of o-minimal counting techniques, for which we need that a suitable restriction of the exponential map $\exp_G: \mathrm{Lie}(G) \rightarrow G(\mathbb{C})$ is definable in the o-minimal structure $\mathbb{R}_{\mathrm{an},\mathrm{exp}}$ \cite{Dries1994}. To make this precise, we need to fix some additional notations and identifications first. 

A definable manifold is a pair $(M,\{\varphi_i: U_i \rightarrow \mathbb R^n\}_{1\leq i \leq K})$ consisting of a real-analytic manifold $M$ and a collection $\{\varphi_i: U_i \rightarrow \mathbb R^n\}_{1\leq i \leq K}$ of finitely many real-analytic charts covering $M$ such that the sets $\varphi_i(U_i \cap U_j)$ ($1 \leq i,j \leq K$) and the transition maps
\begin{equation*}
\varphi_j \circ \varphi_i^{-1}: \varphi_i(U_i \cap U_j) \rightarrow \varphi_j(U_i \cap U_j), \ 1 \leq i,j \leq K,
\end{equation*}
are definable in $\mathbb{R}_{\mathrm{an},\mathrm{exp}}$. In this situation, a subset $X \subseteq M$ is called definable if every $\varphi_i(U_i \cap X) \subseteq \mathbb R^n$ is definable. A map $f: M \rightarrow N$ between definable manifolds is called definable if the associated graph manifold is definable as a subset of the definable manifold $M \times N$. 

We endow $\mathrm{Lie}(G)$ with the structure of a definable manifold by taking a fixed $\mathbb{R}$-linear isomorphism $\iota: \mathrm{Lie}(G) \rightarrow \mathbb{R}^{2(g+t)}$ such that $\iota(\Omega_G) \subseteq \mathbb{Q}^{2(g+t)}$ as a (global) chart. Let $\mathcal{F}_G^\prime \subset V_G$ be a fundamental parallelepiped of the lattice $\Omega_G \subset V_G$. Recalling the decomposition $I V_{\Gm^t} \times V_{G}$ from \eqref{equation::lie_decomposition}, we set $\mathcal{F}_G = IV_{\Gm^t} \times \mathcal{F}^\prime_G$. It is easy to see that $\mathcal{F}_G^\prime$ and $\mathcal{F}_G$ are definable (as subsets of the definable manifold $\mathrm{Lie}(G)$); they are also canonically definable manifolds.

Furthermore, we can choose a projective embedding $\kappa: G \hookrightarrow \mathbb{P}^N_{\mathbb K}$ associated with the global sections of the very ample line bundle $L$. To endow the real-manifold $G(\mathbb{C})$ with the structure of a definable manifold, we use the $(N+1)$ charts induced from the standard covering of $\mathbb{P}^N_{\mathbb K}$ by open affine subsets. 

With these preparations, we can finally state the next lemma.

\begin{lem}\label{lemdef}
	Considering $\mathcal{F}_G$ and $G(\mathbb{C})$ as definable manifolds in the sense above, the restriction $\exp_G|_{\mathcal{F}_G}: \mathcal{F}_G \rightarrow G(\mathbb{C})$ is definable.
\end{lem}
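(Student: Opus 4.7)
The plan is to reduce definability of $\exp_G|_{\mathcal{F}_G}$ to statements about restricted analytic functions combined with the global exponential, building on the approach used by Peterzil--Starchenko and by Habegger--Pila for abelian varieties (compare \cite[Lemma 4.2]{HP}). Using \eqref{equation::lie_decomposition} and Lemma \ref{lemma::splitting}, the fundamental domain decomposes as $\mathcal{F}_G = IV_{\Gm^t} \times \mathcal{F}'_G$, where $\mathcal{F}'_G \subset V_G$ has compact closure and the only non-compact direction $IV_{\Gm^t} \cong \mathbb{R}^t$ is a toric one. Choosing $\mathcal{F}'_G$ compatibly with the splitting $\sigma$, we may further think of $\mathcal{F}'_G$ as a parallelepiped whose projections to $V_{\Gm^t}$ and $V_A$ are bounded fundamental parallelepipeds for $\Omega_{\Gm^t}$ and $\Omega_A$ respectively.

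For the purely toric factor, $\exp_{\Gm^t}$ restricted to $IV_{\Gm^t}$ together with a fundamental domain for $\Omega_{\Gm^t}=(2\pi i\mathbb{Z})^t$ is, in standard coordinates, of the form
\[
(x_j + iy_j)_{j=1}^t \longmapsto \bigl( e^{x_j}(\cos y_j + i \sin y_j)\bigr)_{j=1}^t,
\]
which is definable in $\mathbb{R}_{\mathrm{an},\mathrm{exp}}$ by using the global exponential on the unbounded variables $x_j$ and the restricted analyticity of $\sin, \cos$ on the bounded $y_j$-interval. For the abelian factor, the definability of $\exp_A$ on a fundamental parallelepiped in $V_A$ is the content of \cite[Lemma 4.2]{HP}: near the compact closure of such a parallelepiped, $\exp_A$ is given by ratios of Riemann theta functions, hence by restricted analytic functions, hence is definable in $\mathbb{R}_{\mathrm{an}}$.

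To glue these under the non-trivial extension structure, I would work with the projective embedding $\kappa: G \hookrightarrow \mathbb{P}^N_{\KK}$ of $L$ together with the decomposition $L = M \otimes \overline{\pi}^\ast N$ and $M = \bigotimes_{i=1}^t M_i$ from Subsection \ref{subsection::degrees}. The sections of $\overline{\pi}^\ast N$ pull back under $\exp_G$ to classical Riemann theta functions on $V_A$, whereas the sections of each $M_i$ pull back to analytic quasi-periodic ``theta-like'' functions on $V_G$ whose dependence on the $IV_{\Gm^t}$-direction is purely through the toric exponential. On each of the finitely many standard affine charts of $\mathbb{P}^N(\mathbb{C})$, the composition $\kappa \circ \exp_G|_{\mathcal{F}_G}$ is a quotient of such analytic expressions: their restrictions to a neighborhood of $\overline{\mathcal{F}'_G}$ are restricted analytic, and their dependence on $IV_{\Gm^t}$ only involves $\exp$. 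Altogether, $\kappa \circ \exp_G|_{\mathcal{F}_G}$ is definable in $\mathbb{R}_{\mathrm{an},\mathrm{exp}}$, and since $\kappa$ is a closed immersion this transfers back to $\exp_G|_{\mathcal{F}_G}$.

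The main obstacle will be making the analytic description of the sections of each $M_i$ on $\overline{G}_0$ concrete enough to verify both the claimed product-type form and the analyticity on a neighborhood of $\overline{\mathcal{F}'_G}$. This is precisely where the explicit construction of $\overline{G}_0$ and $M$ from \cite{Kuehne2020} should do the needed work, playing the role of the classical theta framework for semiabelian varieties. One further has to ensure that the finitely many affine charts together cover $\exp_G(\mathcal{F}_G)$ and that the pole divisors of the various theta-type sections used to coordinatize each chart can be arranged to avoid the relevant portion of $\mathcal{F}'_G$ by a generic choice; this is a standard matter given the base-point-freeness of $L$.
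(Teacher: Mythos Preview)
Your approach is plausible and could be made to work, but it is considerably more laborious than the paper's argument, and the ``main obstacle'' you flag is one the paper sidesteps entirely. The key observation you are missing is that since $G$ is \emph{commutative}, one has $\exp_G(x+y)=\exp_G(x)+\exp_G(y)$ for $x\in IV_{\Gm^t}$ and $y\in V_G$, and the group law $G(\C)\times G(\C)\to G(\C)$ is algebraic, hence definable. This immediately reduces the problem to checking separately that $\exp_G|_{\mathcal{F}'_G}$ and $\exp_G|_{IV_{\Gm^t}}$ are definable. The first is automatic from compactness of the closure of $\mathcal{F}'_G$ (so it is even $\mathbb{R}_{\mathrm{an}}$-definable), and the second, after a further reduction to $t=1$ via the same group-law trick, is just $x\mapsto\exp(cx)$ for a real constant $c$. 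No theta functions, no explicit description of the sections of $M_i$, and no chart-by-chart bookkeeping are required.

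By contrast, your route attempts to write down $\kappa\circ\exp_G$ explicitly on all of $\mathcal{F}_G$ at once, which forces you to confront the non-split extension structure head-on in the ``gluing'' step. That is precisely where the work lies in your outline, and you rightly identify making the pullbacks of sections of $M_i$ concrete as the crux --- but this difficulty is an artifact of the method, not of the lemma. The paper's trick of pushing the extension structure into the (definable) group law is what buys the short proof; it is worth internalizing for other definability questions about commutative algebraic groups.
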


\begin{proof} We use again the decomposition $\mathrm{Lie}(G) = I V_{\Gm^t} \times V_{G}$. For each $x \in I V_{\Gm^t}$ and $y \in V_{G}$, we have $\exp_G(x+y)=\exp_G(x) + \exp_G(y)$ as $G$ is commutative. As the group law $G(\mathbb C) \times G(\mathbb C) \rightarrow G(\mathbb C)$ is algebraic and hence definable, it suffices to prove that the restrictions of $\exp_G$ to $IV_{\Gm^t}$ and to $\mathcal{F}_G^\prime$ are definable. By compactness, the restriction of $\exp_G$ to $\mathcal{F}_G^\prime$ is definable (even in $\mathbb R_{\mathrm{an}}$). 
	
	It hence remains to prove the definability of $\exp_G|_{IV_{\Gm^t}}$ and using the fact that the group law is algebraic once again, we can even assume $t=1$ without loss of generality. In this situation, the identification $\Omega_{\Gm} = \mathbb Z \cdot (2\pi i)$ yields $IV_{\mathbb{G}_m} = \mathbb R$ and $\exp_G|_{IV_{\Gm}} = \exp(c \cdot x)$ for some real constant $c$. In any case, we see that $\exp_G|_{IV_{\Gm^t}}$ is definable.
\end{proof}

\subsection{O-minimal counting}


Recall that Lemma \ref{lemdef} provides us with a fundamental domain $\mathcal{F}_G\subseteq I V_{\Gm^t} \times V_{G}$ so that $\exp_G|_{\mathcal{F}_G}: \mathcal{F}_G \rightarrow G(\mathbb{C})$ is definable in the o-minimal structure $\mathbb{R}_{\mathrm{an},\mathrm{exp}}$. 

We are also going to identify $I V_{\Gm^t} \times V_{G}$ with $\R^{2t+2g}$ by choosing a basis so that the period lattice $\Omega_G$ corresponds to $\Z^{t+2g}$ in $ V_{G}$.

We let 
$$
\log_G(\cC)=(\exp_G|_{\mathcal{F}_G})^{-1}(\cC(
\C)).
$$
This is a definable set of dimension 2 (see \cite[Lemma 6.2]{HP}).

We have an induced embedding of $\mathrm{End}(\mathrm{Lie}(G))$ in $M_{2t+2g}(\R)$, which we identify with $\R^{(2t+2g)^2}$.
This will allow us to see each $\mathrm{Lie}(H)\subseteq \mathrm{Lie}(G)$ as the kernel of a matrix.

We consider the definable set
$$
Z=\{(\psi, x,z) \in M_{2t+2g}(\R) \times\R^{2t+2g} \times \R^{2t+2g} : z \in \log_G(\cC), \psi (z-x)=0 \}.
$$
We see it as a family with parameters in $M_{2t+2g}(\R)$ and fibers
$$
Z_{\psi_0}=\{( x,z) \in \R^{2t+2g} \times \R^{2t+2g}: z \in \log_G(\cC), \psi_0 (z-x)=0 \}.
$$

We moreover set, for some $T\geq 1$,
$$
Z_{\psi_0}(\Q,T)=\{( x,z)\in Z_{\psi_0}: x\in  \Q^{2t+2g} , H(x)\leq T \},
$$
where $H(\cdot)$ is the exponential height on $ \Q^{2t+2g}$.

Finally, we let $\pi_1$ and $\pi_2$ be the projection maps from $\R^{2t+2g} \times \R^{2t+2g}$ to the first and the second factor respectively.

The following statement is a special case of \cite[Corollary 7.2]{HP}.

\begin{lem}\label{lem::o-min} For every $\epsilon>0$ there exists a constant $c=c(Z,\epsilon)>0$ that satisfies the following property. If $T\geq 1$ and $\Sigma \subseteq Z_{\psi_0}(\Q,T)$ with $|\pi_2(\Sigma)| >cT^\epsilon$, there exists a continuous and definable function $\beta:[0,1]\to Z_{\psi_0}$ such that
	\begin{enumerate}
		\item the composition $\pi_1\circ \beta$ is semialgebraic and its restriction to $(0,1)$ is real analytic;
		\item the composition $\pi_2\circ \beta$ is non-constant;
		\item we have $\pi_2(\beta(0))\in \pi_2(\Sigma)$.
	\end{enumerate}
\end{lem}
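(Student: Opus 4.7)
The plan is to verify that the assertion is a direct translation of \cite[Corollary 7.2]{HP} to the present setting. First, I would check that the total space $Z \subseteq M_{2t+2g}(\R) \times \R^{2t+2g} \times \R^{2t+2g}$ is definable in $\R_{\mathrm{an},\mathrm{exp}}$: the constraint $z \in \log_G(\cC)$ is definable by Lemma \ref{lemdef}, while the constraint $\psi(z-x) = 0$ is polynomial in $(\psi, x, z)$. Viewing $\psi$ as a parameter, $Z$ is thus a definable family with definable fibers $Z_{\psi_0}$, and $\pi_1$, $\pi_2$ are coordinate projections of the fibers. The counting hypothesis $|\pi_2(\Sigma)| > cT^\epsilon$ is precisely the input required by \cite[Corollary 7.2]{HP}, so the conclusion follows by direct appeal.

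To explain why \cite[Corollary 7.2]{HP} holds, the key ingredient is the parametric block-family refinement of the Pila--Wilkie counting theorem. For any $\epsilon > 0$, this yields finitely many definable families of connected semialgebraic blocks $(W_\psi^{(j)})_\psi$ with $W_\psi^{(j)} \subseteq Z_\psi$, and a constant $c(\epsilon) > 0$, such that for every parameter value $\psi_0$, each rational point in $Z_{\psi_0}(\Q, T)$ either lies on a block within some $W_{\psi_0}^{(j)}$ or is one of at most $c(\epsilon) T^\epsilon$ exceptional points. Choosing $c > c(\epsilon)$ and using that $|\pi_2(\Sigma)| > cT^\epsilon$, a pigeonhole argument shows that some block $B \subseteq W_{\psi_0}^{(j_0)}$ must contain at least two points of $\Sigma$ with distinct $\pi_2$-coordinates; otherwise the projection $\pi_2$ would be constant on every block and the total count of values of $\pi_2(\Sigma)$ coming from blocks would be too small.

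From such a block $B$, semialgebraic curve selection produces a continuous semialgebraic arc $\beta: [0,1] \to B$ joining these two points. Its coordinate projections $\pi_1 \circ \beta$ and $\pi_2 \circ \beta$ are semialgebraic, and by passing to a subinterval and reparametrizing along a smooth stratum of the semialgebraic parametrization of $B$, one can ensure that $\pi_1 \circ \beta$ restricted to $(0,1)$ is real-analytic. The projection $\pi_2 \circ \beta$ is non-constant because the endpoints of the arc were chosen with different $\pi_2$-values, and $\beta(0)$ is arranged to coincide with one of the two points of $\Sigma$, so that $\pi_2(\beta(0)) \in \pi_2(\Sigma)$.

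The main obstacle in proving this from scratch would be the parametric block-family version of Pila--Wilkie: the original theorem produces blocks only for fixed fibers and a priori without uniformity in the parameter $\psi$, whereas here it is crucial that the number of block families and the exceptional constant $c(\epsilon)$ do not depend on $\psi_0$. This uniformity is precisely what \cite{HP} establish, refining Pila's earlier parametric counting, and it is the only non-trivial ingredient beyond the routine verification of definability that the present lemma requires.
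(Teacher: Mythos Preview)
Your approach is the same as the paper's: the authors give no proof at all beyond the sentence ``The following statement is a special case of \cite[Corollary 7.2]{HP},'' and your first paragraph reproduces exactly that reduction, together with the (implicit) definability check.

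One small imprecision in your supplementary explanation: the block $B$ sits inside $Z_{\psi_0}$, whose second factor is $\log_G(\cC)$ and hence transcendental, so $B$ itself is not semialgebraic and $\pi_2\circ\beta$ is merely definable, not semialgebraic. In Habegger--Pila's construction it is $\pi_1(B)$ that is semialgebraic, and one applies curve selection there before lifting back to $B$ via the definable section; this is why the lemma only asserts semialgebraicity of $\pi_1\circ\beta$. This does not affect the validity of your actual proof, which is the citation.
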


We are also going to need the following consequence of Ax's Theorem \cite{Ax}.

\begin{lem} \label{lem::Ax}
	Let $\gamma:[0,1]\to \mathrm{Lie}(G)$ be real semialgebraic and continuous with $\gamma|_{(0,1)}$ real analytic. The Zariski closure in $G$ of the image of $\exp_G \circ \gamma$ is a coset of $G$.
\end{lem}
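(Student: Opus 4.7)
The plan is to reduce the statement to the Ax-Lindemann-Weierstrass theorem for the commutative algebraic group $G$, a well-known geometric consequence of Ax's transcendence result \cite{Ax}. First, I would dispose of the trivial case in which $\exp_G\circ\gamma$ is constant (then the image is a single point, which is a $0$-dimensional coset). Otherwise, write $X$ for the Zariski closure of $(\exp_G\circ\gamma)([0,1])$ in $G$; the goal is to show that $X$ is a coset of some algebraic subgroup.

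Next, I would complexify $\gamma|_{(0,1)}$. Fixing a complex-linear identification $\mathrm{Lie}(G) \cong \C^{g+t}$, write $\gamma = (\gamma_1,\dots,\gamma_{g+t})$. Since each $\gamma_i$ is real semialgebraic, continuous on $[0,1]$, and real-analytic on $(0,1)$, both its real and imaginary parts are algebraic over $\R(t)$; consequently each $\gamma_i$ extends to a holomorphic function $\tilde\gamma_i$ on some connected open neighborhood $U \subseteq \C$ of the real interval $(0,1)$. Set $\tilde\gamma = (\tilde\gamma_1,\dots,\tilde\gamma_{g+t}) : U \to \mathrm{Lie}(G)$. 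As each component is algebraic over $\C(t)$, the Zariski closure $V$ of $\tilde\gamma(U)$ in $\mathrm{Lie}(G)$ is an irreducible complex algebraic subvariety of dimension at most $1$.

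Then I would argue that $X$ coincides with the Zariski closure of $\exp_G(V)$ in $G$. Note that $\gamma((0,1)) = \tilde\gamma((0,1)) \subseteq V$ and, by continuity of $\gamma$ together with the Euclidean closedness of $V$, also $\gamma(0), \gamma(1) \in V$; hence $\exp_G(\gamma([0,1])) \subseteq \exp_G(V)$, giving one inclusion. For the reverse, $(\exp_G\circ\tilde\gamma)^{-1}(X)$ is an analytic subset of the connected complex manifold $U$ containing the non-discrete real interval $(0,1)$, so by the identity principle it equals $U$; since $\tilde\gamma(U)$ is analytically dense in the irreducible curve $V$ and $\exp_G^{-1}(X)$ is analytic in $\mathrm{Lie}(G)$, one deduces $V \subseteq \exp_G^{-1}(X)$. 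The conclusion now follows from the Ax-Lindemann-Weierstrass theorem for $G$: the Zariski closure of $\exp_G(V)$ in $G$ is a coset of an algebraic subgroup for any irreducible algebraic subvariety $V \subseteq \mathrm{Lie}(G)$. The main obstacle is really just locating this in the form needed: \cite{Ax} phrases it as a ``functional Schanuel'' transcendence statement, from which the geometric Ax-Lindemann form for commutative algebraic groups is a now-standard extraction.
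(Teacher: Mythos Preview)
Your proposal is correct and is essentially the argument the paper invokes: the paper simply cites \cite[Theorem~5.4]{HP}, noting that the proof there carries over verbatim since Ax's theorem holds for semiabelian varieties, and the Habegger--Pila argument is precisely the complexification-plus-Ax--Lindemann reduction you outline.
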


\begin{proof}
In \cite[Theorem 5.4]{HP} Habegger and Pila formulated and proved this statement for abelian varieties. The exact same proof works in our case as Ax's Theorem holds for semiabelian varieties.
\end{proof}

\subsection{Conclusion} 
In order to prove Proposition \ref{lemfinbh} we suppose there is a real number $B$ and infinitely many points $p\in (\cC\cap G^{[2]})(\QQ)$ with $\hhat_L(p)\leq B$. Note that, by Northcott's Theorem, we have that the degree over $\KK$ of such points must tend to infinity.

Let $p$ be one of these points. Then, $p\in \langle p \rangle =H_p+q$ for some $q \in \mathrm{Tors}(G)$ and $\dim H_p\leq \dim G-2$.  We assume that $q$ is of minimal order.

By our assumption on $\KK$ we have that all $\Gal(\overline{\KK}/\KK)$-conjugates of $p$ lie in an algebraic subgroup of $G$ of codimension at least 2. Actually we have $\langle p^\sigma\rangle= H_p+q^\sigma$ for all $\sigma\in \Gal(\overline{\KK}/\KK)$ and thus $\Delta( \langle p^\sigma \rangle)=\Delta( \langle p \rangle)$.

We let $z_\sigma, r_\sigma \in \log_G(\cC)$ be the logarithms of $p^\sigma$ and $q^\sigma$ in our fundamental domain, i.e., $\exp_G(z_\sigma)=p^\sigma$ and $\exp_G(r_\sigma)=q^\sigma$. Since we have identified $\Omega_G$ with $\Z^{t+2g} $ in $V_G$ and $r_\sigma\in \mathcal{F}_G$, we have that $r_\sigma\in \{0\} \times \Q^{t+2g}$ and $H(r_\sigma)=\ord(q^\sigma)$.

Now, Lemma \ref{lemma::norm_estimate} provides us with a $v_\sigma \in \Omega_G+\mathrm{Lie}(H_p)$ such that $\exp_{G}(v_\sigma)=p^\sigma-q^\sigma$ and 
$$
\Vert v_\sigma\Vert \leq c_{16}[\KK(p^\sigma,q^\sigma):\KK] \max \{1,h_L(p^\sigma)\}\leq c_{17}[\KK(p):\KK] \Delta(\langle p \rangle),
$$
as $h_L(p)\leq B$.

We compare $\Vert v_\sigma\Vert $ and $\Vert z_\sigma-r_\sigma\Vert$. Since their image via $\exp_{G}$ coincide, the difference between $v_\sigma$ and $z_\sigma-r_\sigma$ is a period, which must project to the identity on $IV_{\mathbb{G}_m^t}$. Therefore, the projections of these two elements on $IV_{\mathbb{G}_m^t}$ coincide. Moreover, the projection of $z_\sigma-r_\sigma$ on $V_G$ is bounded, therefore 
$$
\Vert z_\sigma-r_\sigma\Vert \leq c_{18} \Vert v_\sigma\Vert \leq c_{19}[\KK(p):\KK] \Delta(\langle p \rangle).
$$
%
%

Moreover, Lemma \ref{lemma::minkowski}(2) guarantees that there exists $\omega_\sigma \in \Omega_G$  with $z_\sigma-(r_\sigma+\omega_\sigma)\in \mathrm{Lie}(H_p)$ and 
$$
\Vert \omega_\sigma \Vert \leq \Vert z_\sigma-r_\sigma \Vert +c_{20} \deg_L(H_p)\leq c_{21} [\KK(p):\KK]  \Delta(\langle p \rangle).
$$

We conclude
$$
H(\omega_\sigma+r_\sigma) \leq c_{22}[\KK(p):\KK] \Delta(\langle p \rangle),
$$
and therefore we have
$$
H(\omega_\sigma+r_\sigma)  \leq c_{23} [\KK(p):\KK]^{c_{24}}
$$
by \eqref{lemma::complexity}. Set $d_p:=[\KK(p):\KK]$. Then,
$$
\Sigma_p:=\{(r_\sigma+\omega_\sigma,z_\sigma):\sigma\in \Gal(\overline{\KK}/\KK) \}  \subseteq Z_{\psi_0}(\Q,c_{23} d_p^{c_{24}}) ,
$$
where $\psi_0$ is a matrix whose kernel is $\mathrm{Lie}(H_p)$.

We now apply Lemma \ref{lem::o-min} with $\epsilon=1/(2c_{24})$, $T=c_{23} d_p^{c_{24}}$ and 
$\Sigma=\Sigma_p$. For $d_p$ large enough, we have $c(c_{23} d_p^{c_{24}})^\epsilon<d_p$ and therefore
$$
|\pi_2(\Sigma_p)|=d_p>c T^\epsilon.
$$
Lemma \ref{lem::o-min} then ensures the existence of a continuous and definable function $\beta:[0,1]\to Z_{\psi_0}$ satisfying 
	\begin{enumerate}
	\item the composition $\pi_1\circ \beta$ is semialgebraic and its restriction to $(0,1)$ is real analytic;
	\item the composition $\pi_2\circ \beta$ is non-constant;
	\item we have $\pi_2(\beta(0))\in \pi_2(\Sigma)$.
\end{enumerate}

We now consider the quotient $\phi:G\to G/H_p$ and the corresponding $\mathrm{Lie}(\phi):\mathrm{Lie}(G)\to \mathrm{Lie}(G/H_p)$ whose kernel is the kernel of $\psi_0$.

We note that by definition $$\exp_{G/H_p}\circ \Lie(\phi)\circ\pi_1\circ \beta=\exp_{G/H_p}\circ\Lie(\phi)\circ \pi_2 \circ\beta=\phi \circ {\exp_G} \circ {\pi_2} \circ \beta.$$
We apply Lemma \ref{lem::Ax} with $\gamma=\mathrm{Lie}(\phi)\circ\pi_1\circ\beta$. The Zariski closure of the image of ${\exp_{G/H_p}}\circ\gamma$ is a coset of $G/H_p$. On the other hand, the Zariski closure of the image of ${\exp_{G}}\circ\pi_2\circ\beta$ is contained in $C$. This containment cannot be strict because $\pi_2\circ \beta$ is non-constant.

Therefore, the Zariski closure of the image of $\phi \circ{\exp_{G}}\circ{\pi_2}\circ\beta$ equals $\phi(\cC)$ and is a one dimensional coset that must be a torsion coset because it contains the torsion point $\phi(\pi_2(\beta(0)))$.

Since $G/H_p$ has dimension at least 2 it follows that $\cC$ is contained in a proper algebraic subgroup of $G$. This contradicts our hypothesis and therefore $d_p$ must be uniformly bounded and Proposition \ref{lemfinbh} is proved.

\section{Proof of Theorem \ref{mainthm}}

\subsection{Preparations}

We recall that $G$ is a semiabelian variety defined over a number field $\mathbb{K}$, which is given by an exact sequence
\begin{equation*}
\begin{tikzcd}
1 \arrow{r} & T \arrow{r} & G \arrow{r}{\pi} &A \arrow{r} &0.
\end{tikzcd}
\end{equation*}

We recall that $L$ is an ample line bundle on $\overline{G}$ that we have fixed before Lemma \ref{lemma::covolume_degree}.

We prove the theorem by induction on $\dim(G)$. If $\dim(G)=1$, then $G^{[2]}=\emptyset$ and there is nothing to prove. If $\dim(G)=2$, then $G^{[2]}= \mathrm{Tors}(G)$ and the theorem is a consequence of the Manin-Mumford conjecture for semiabelian varieties proven by Hindry \cite{Hindry1988}. These two cases serve as the basis of our induction.

{For the induction step, we assume now that the theorem is already proven for all semiabelian varieties of dimension strictly less than $\dim(G) > 2$.} Let $p_0 + G_0$, $p_0 \in \cC(\overline{\Q})$, be the smallest coset containing $\cC$. Although $\cC$ is by assumption not contained in any proper algebraic {subgroup} of $G$, we may have $G_0 \neq G$.

We make an elementary observation on the dimension of $G_0$.

\begin{lem}\label{lem::mainproof1} Let $H \subset G$ be a semiabelian subvariety of codimension at least $2$ and $q \in \mathrm{Tors}(G)$. If the intersection $\cC \cap (q + H)$ is non-empty, then $G_0 + H=G$. In particular, it holds that $\dim(G_0) \geq 2$ if $\cC \cap G^{[2]}$ is non-empty.
\end{lem}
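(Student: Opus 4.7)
My plan is a direct geometric reduction that uses the torsion of $q$ together with the irreducibility hypothesis on $\cC$, bypassing any structure of $G_0+H$ beyond its dimension.

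First, I would leverage a point $c \in \cC \cap (q+H)$ to show that $\cC$ lies in a single torsion coset of the subgroup $G_0+H$. Indeed, $c-q \in H$ and $c-p_0 \in G_0$ give
\begin{equation*}
p_0 - q = (c-q)-(c-p_0) \in H + G_0,
\end{equation*}
so that $\cC \subseteq p_0 + G_0 \subseteq q+(G_0+H)$. Next, I would argue by contradiction: suppose $G_0+H$ is a proper algebraic subgroup of $G$ and pick $n \geq 1$ with $[n](q)=0$. Since $[n]: G \to G$ is a surjective isogeny, the preimage $[n]^{-1}(G_0+H)$ is an algebraic subgroup of $G$ of the same dimension as $G_0+H$, hence proper. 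It contains the torsion coset $q+(G_0+H)$ and therefore also $\cC$, contradicting the hypothesis that $\cC$ is not contained in a proper algebraic subgroup of $G$. This establishes the first assertion $G_0+H=G$.

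For the ``in particular'' part, I would unwrap the definition of $G^{[2]}$: any point of $\cC \cap G^{[2]}$ lies in some algebraic subgroup $H' \subseteq G$ of codimension $\geq 2$. Its identity component $H$ is a semiabelian subvariety of the same codimension, and each connected component of $H'$ is a torsion coset of $H$, so the point lies in some coset $q+H$ with $q \in \mathrm{Tors}(G)$. Applying the first assertion, $G_0+H=G$, and the standard inequality $\dim(G_0+H) \leq \dim(G_0)+\dim(H)$ forces
\begin{equation*}
\dim(G_0) \;\geq\; \dim(G)-\dim(H) \;\geq\; 2.
\end{equation*}

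I do not expect any serious obstacle: the only subtlety is remembering to pass from an arbitrary algebraic subgroup to its identity component in order to extract a semiabelian subvariety $H$ (so that the hypothesis of the first part applies verbatim), and using the isogeny $[n]$ to convert ``$\cC$ lies in a torsion coset of a proper subgroup'' into ``$\cC$ lies in a proper subgroup'', which is what the irreducibility-plus-non-subgroup hypothesis rules out.
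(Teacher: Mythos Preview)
Your proof is correct and follows essentially the same route as the paper: both arguments pick a point $p\in\cC\cap(q+H)$, observe that $\cC\subseteq p+G_0\subseteq q+(G_0+H)$, and then derive a contradiction from $\cC$ not lying in a proper algebraic subgroup. The only difference is that the paper leaves the step ``$\cC$ lies in a proper torsion coset $\Rightarrow$ $\cC$ lies in a proper algebraic subgroup'' implicit, whereas you spell it out via the preimage $[n]^{-1}(G_0+H)$; your extra care in the ``in particular'' clause (passing to the identity component of $H'$) is likewise just making explicit what the paper takes for granted.
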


\begin{proof} Note that $G_0 + H$ is a connected algebraic subgroup of $G$. Choose a point $p \in (\cC \cap (q + H))(\QQ)$. As $\cC \subseteq p + G_0 \subseteq q + G_0 + H$, we have $G=G_0 + H$ because $\cC$ is not contained in a proper torsion coset. 
\end{proof}

{In the sequel, we hence assume that $\dim(G_0)\geq 2$.}





\subsection{An auxiliary surface}

We consider the difference map
$$
\begin{array}{llll}
\Delta: & G\times G & \longrightarrow & G, \\
&(p_1,p_2)& \longmapsto & p_1 - p_2,
\end{array}
$$
and the irreducible variety $S $ that is the Zariski-closure of $\Delta(\cC \times \cC) = \cC - \cC$ in $G$. 

It is easy to see that $G_0$ is the {minimal coset} containing $S$: In fact, any coset containing $S$ is a connected algebraic subgroup because of $0_G \in S$. If $H \subseteq G$ is a subgroup containing $S$, we have $$\cC \subset (p_0 + G_0) \cap (p_0 + H) = p_0 + (G_0 \cap H)$$
and thus $G_0 \subseteq G_0 \cap H$ as $G_0$ is the minimal subgroup that contains a translate of $\cC$.

In the next lemma, we collect some basic properties of $S$.

\begin{lem} 
\label{lemma::surface_nonanomalous}	
$S$ has dimension $2$ and is not $G_0$-anomalous in itself.
\end{lem}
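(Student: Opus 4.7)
The plan is to establish the two claims separately, each by short direct arguments; most of the groundwork has already been laid in the discussion preceding the lemma statement.

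For $\dim S = 2$, note first that $S \subseteq G_0$ (since $\cC - \cC \subseteq (p_0+G_0)-(p_0+G_0) = G_0$), that $\dim S \leq 2$ as $S$ is the closure of the image of $\Delta|_{\cC \times \cC}$, and that $\dim S \geq 1$ since for fixed $p_0 \in \cC$ the translate $\cC - p_0$ is a $1$-dimensional subvariety of $S$. I would argue by contradiction: suppose $\dim S = 1$. Since $\cC \times \cC$ is irreducible, so is $S$; therefore, for each $p_0 \in \cC$, the irreducible curve $\cC - p_0 \subseteq S$ must coincide with $S$. Comparing two points $p_0, p_0' \in \cC$ yields $(p_0-p_0') + S = S$, so $\cC - \cC \subseteq \mathrm{Stab}(S)$ and hence $S \subseteq \mathrm{Stab}(S)^0$. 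The general fact $\mathrm{Stab}(S) + p \subseteq S$ for any $p \in S$ gives $\dim \mathrm{Stab}(S) \leq \dim S = 1$, so $\mathrm{Stab}(S)^0 = S$ is an irreducible algebraic subgroup of dimension $1$. Then $\cC = p_0 + S$ is a translate of this subgroup, forcing $G_0 = S$ to have dimension $1$. This contradicts the assumption $\dim G_0 \geq 2$ that we are working under (by Lemma \ref{lem::mainproof1}).

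For the second assertion, I read ``$S$ is not $G_0$-anomalous'' as: $S$ is not contained in a proper coset of $G_0$, i.e.\ the minimal coset of $G_0$ containing $S$ is $G_0$ itself. This is essentially recorded in the paragraph preceding the lemma, and the argument is as follows. Any coset of $G_0$ that contains $S$ must contain $0_G \in S$ and is therefore a connected algebraic subgroup $H \subseteq G_0$. From $\cC = p_0 + (\cC - p_0) \subseteq p_0 + S \subseteq p_0 + H$ and the defining minimality of $G_0$ as the smallest coset containing $\cC$, we deduce $G_0 \subseteq H$, and combined with $H \subseteq G_0$ this forces $H = G_0$.

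No real obstacle is anticipated: both statements are foundational. The only subtle point is the input $\dim G_0 \geq 2$, which is the non-trivial case extracted in Lemma \ref{lem::mainproof1}; without it, the dimension argument would collapse precisely because $\cC$ could genuinely be a coset of a $1$-dimensional subgroup (the ``easy'' situation already handled in the base of the induction).
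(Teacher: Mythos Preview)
Your proposal is correct and follows essentially the same approach as the paper: for $\dim S = 2$ you argue by contradiction that $S$ would be a one-dimensional subgroup (you route this through $\mathrm{Stab}(S)$ whereas the paper directly observes $C' - C' \subseteq C'$ for $C' = \cC - p_0$, but these are the same idea), and for non-anomalousness you correctly invoke the minimality of $G_0$ established just before the lemma.
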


\begin{proof}
For the sake of contradiction, assume that $S = \cC -\cC$ is an irreducible curve. Then, the translated curve $\cC^\prime = \cC - p_0$ satisfies $\cC^\prime - \cC^\prime \subseteq \cC^\prime$ and is hence a one-dimensional algebraic subgroup. As $\cC$ is just a translate of this subgroup, this gives a contradiction to our assumption that $\dim(G_0) \geq 2$. 

The second part of the statement was proven above.
\end{proof}

The Structure Theorem \ref{structure} applied to $S$ in $G_0$ gives a finite set $\Phi_S$ of proper abelian subvarieties of $G_0$ such that 
$$
S \setminus S^{\mathrm{oa}} = \bigcup_{H\in \Phi_S} \mathscr{Z}_H,
$$
where each $\mathscr{Z}_H$ is a finite union of subvarieties $W\subset S\cap (p+H )$, for some $p\in G_0(\QQ)$, with $\dim W \geq 1$ and $\dim W \geq \dim S - \codim H +1=3-\codim H$. Note that Lemma \ref{lemma::surface_nonanomalous}, implies that all the $W$ in the finite union above are curves. Moreover, all the $H$ that give a contribution to the above union have codimension at least 2.

We may then conclude that there exist finitely many irreducible algebraic curves $C_i \subset S$, $1 \leq i \leq N$, such that
\begin{equation}
\label{equation::Soa}
S \setminus S^{\mathrm{oa}} = C_1 \cup \cdots \cup C_{N}.
\end{equation}
Each curve $C_i$, $1 \leq i \leq N$, is contained in a coset $p_i + H_i \subset G_0$ with $p_i \in C_i(\QQ)$ and $H_i \subset G_0$ a subgroup of codimension at least $2$; for the sequel, we stipulate that each coset $p_i + H_i$ is the minimal coset containing $C_i$.

\subsection{A height comparison}

The restriction $\Delta|_{\cC \times \cC}: \cC \times \cC \rightarrow S$ is a dominant, generically finite map. The following standard lemma allows us to bound the height of most $\QQ$-points on $\cC \times \cC$ by the height of their images in $S$.

\begin{prop}
	\label{lemma::heightbound}
	Let $X$ (resp.\ $Y$) be an irreducible algebraic variety defined over $\QQ$, $L$ (resp.\ $M$) an ample line bundle on $X$ (resp.\ $Y$). Assume furthermore that $\dim X=\dim Y$ and let $f:X\rightarrow Y$ be a dominant morphism over $\QQ$. Set
	\begin{equation*}
	Z(f):= \{ y \in Y: \text{the fiber $X_y$ of $f$ over $y$ has dimension $>0$} \}.
	\end{equation*}
	Then there exist constants $c_1, c_2 > 0$ such that
	\begin{align}
	\label{equation::height_inequality}
	h_L(x) \leq c_1 h_M(f(x)) + c_2
	\end{align} 
	for all points $x \in X(\QQ)$ with $f(x) \notin Z(f)$. 
\end{prop}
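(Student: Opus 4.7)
The plan is to reduce to the projective case and then exploit Zariski's main theorem: outside the exceptional locus $f^{-1}(Z(f))$, the morphism $f$ becomes finite, so $f^{\ast}M$ becomes ample there, and the inequality $h_L \le c_1\, h_M \circ f + c_2$ will follow by comparing two \emph{ample} line bundles on this open complement. To make this precise, I would first choose projective closures $\overline{X} \supseteq X$ and $\overline{Y} \supseteq Y$ together with ample extensions $\overline{L}$ of $L$ and $\overline{M}$ of $M$. Resolving the indeterminacies of the induced rational map via Hironaka, and possibly replacing $\overline{L}$ by its tensor product with a sufficiently ample line bundle on the resulting blow-up, we may assume that $f$ extends to a morphism $\overline{f}: \overline{X} \to \overline{Y}$ with $\overline{L}$ still ample. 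Weil heights for $L, M$ coincide with the restrictions of those for $\overline{L}, \overline{M}$ on $X, Y$ up to $O(1)$, so it suffices to prove the bound on $\overline{X}$ for all $x \in \overline{X}(\QQ)$ with $\overline{f}(x) \notin \overline{Z}$, where $\overline{Z} = \{ y \in \overline{Y}: \dim \overline{f}^{-1}(y) > 0 \}$. The inclusion $Z(f) \subseteq \overline{Z} \cap Y$ then covers the original statement.

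Next, Zariski's main theorem, applied to the proper, dominant, generically finite morphism $\overline{f}$, shows that $\overline{Z}$ is a proper Zariski-closed subset of $\overline{Y}$ and that $\overline{f}$ restricts to a finite morphism on $\overline{f}^{-1}(\overline{Y} \setminus \overline{Z})$. Since the pullback of an ample line bundle by a finite morphism remains ample, $\overline{f}^{\ast}\overline{M}$ is ample on $\overline{X} \setminus \overline{f}^{-1}(\overline{Z})$; on all of $\overline{X}$ it remains nef (being the pullback of a nef line bundle) and big, its top self-intersection equalling $\deg(\overline{f}) \cdot (\overline{M}^{\dim \overline{Y}}) > 0$.

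I would then appeal to the augmented-base-locus refinement of Kodaira's lemma: the augmented base locus $\mathbf{B}_+(\overline{f}^{\ast}\overline{M})$ equals the complement of the largest open subset on which $\overline{f}^{\ast}\overline{M}$ is ample and is therefore contained in $\overline{f}^{-1}(\overline{Z})$. Consequently, for some positive integer $n$ and some effective divisor $D$ on $\overline{X}$ with $\mathrm{Supp}(D) \subseteq \overline{f}^{-1}(\overline{Z})$, one has $n\, \overline{f}^{\ast} \overline{M} - \overline{L} \sim D$. Combining this with the standard identities
\begin{equation*}
n\, h_{\overline{f}^{\ast} \overline{M}}(x) - h_{\overline{L}}(x) = h_D(x) + O(1), \quad h_{\overline{f}^{\ast} \overline{M}}(x) = h_{\overline{M}}(\overline{f}(x)) + O(1),
\end{equation*}
together with the fact that $h_D$ is bounded below off $\mathrm{Supp}(D)$, one obtains the desired bound $h_{\overline{L}}(x) \le n\, h_{\overline{M}}(\overline{f}(x)) + c'$ for every $x \in \overline{X}(\QQ)$ with $\overline{f}(x) \notin \overline{Z}$.

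The main obstacle I anticipate is the refinement of Kodaira's lemma locating $\mathrm{Supp}(D)$ inside $\overline{f}^{-1}(\overline{Z})$, which relies on the characterization of the augmented base locus of a big and nef line bundle as the complement of its maximal ampleness open. An elementary alternative avoids this machinery by arguing directly on affine charts trivialising $\overline{M}$: over each such chart $V \subseteq \overline{Y} \setminus \overline{Z}$, finiteness of $\overline{f}$ yields that $\overline{f}^{-1}(V)$ is affine and integral over $V$, and the resulting polynomial integral dependence relations translate into explicit height inequalities via the triangle inequality at every place of the number field of definition.
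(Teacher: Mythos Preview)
Your route via compactification, Zariski's main theorem, and the augmented base locus is quite different from the paper's, which simply quotes Silverman's height-comparison theorem to get the inequality on a dense open $U\subseteq Y$ and then runs a noetherian induction over the irreducible components of $Y\setminus U$ not contained in $Z(f)$. However, your key step is stated too strongly and is in fact false. You claim that from $\mathbf{B}_+(\overline{f}^{\ast}\overline{M})\subseteq \overline{f}^{-1}(\overline{Z})$ one obtains a single effective $D\sim n\,\overline{f}^{\ast}\overline{M}-\overline{L}$ with $\mathrm{Supp}(D)\subseteq \overline{f}^{-1}(\overline{Z})$. Take $\overline{f}$ finite, so that $\overline{Z}=\emptyset$: you are then asking that $\overline{L}$ be a multiple of $\overline{f}^{\ast}\overline{M}$ in $\mathrm{Pic}(\overline{X})$, which fails for instance for the degree-two quotient $\overline{f}:\mathbb{P}^1\times\mathbb{P}^1\to\mathrm{Sym}^2\mathbb{P}^1\cong\mathbb{P}^2$ with $\overline{f}^{\ast}\mathcal{O}(1)=\mathcal{O}(1,1)$ and $\overline{L}=\mathcal{O}(2,1)$. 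The containment of $\mathbf{B}_+$ only controls the \emph{stable base locus} of $n\,\overline{f}^{\ast}\overline{M}-\overline{L}$ for large $n$, not the support of an individual member of the linear system.

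The repair is easy and preserves your outline: from $\mathbf{B}(n\,\overline{f}^{\ast}\overline{M}-\overline{L})\subseteq \overline{f}^{-1}(\overline{Z})$ for $n\gg 0$, noetherianity of $\overline{X}$ produces finitely many effective divisors $D_1,\dots,D_r$ (in various multiples of $n\,\overline{f}^{\ast}\overline{M}-\overline{L}$) with $\bigcap_j\mathrm{Supp}(D_j)\subseteq \overline{f}^{-1}(\overline{Z})$; for any $x$ with $\overline{f}(x)\notin\overline{Z}$ some $D_j$ avoids $x$, giving a uniform lower bound for $h_{n\,\overline{f}^{\ast}\overline{M}-\overline{L}}(x)$ and hence the desired inequality. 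Your ``elementary alternative'' via integral dependence on affine charts of $\overline{Y}\setminus\overline{Z}$ is also sound and, once finiteness of $\overline{f}$ over that open set is established, yields the bound on all of $Y\setminus Z(f)$ in a single pass---more directly than the paper's layered induction.
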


By upper semicontinuity of the fiber dimension (\cite[Th\'eor\`eme 13.1.3]{EGA4III}), the set $Z(f)$ is closed. In addition, it is easy to show that $Z(f)$ has codimension $\geq 2$ in $Y$. In our application to surfaces $X$ and $Y$, this means that $Z(f)$ is a finite set of points.

\begin{proof}
		This follows from \cite[Theorem 1]{Silverman2011}. The height inequality obtained in \textit{loc.cit.} is of the desired form \eqref{equation::height_inequality}, but only valid for all points in a (not determined) Zariski open dense subset $U \subseteq Y$; we hence have to verify that we can ensure $Y \setminus U \subseteq Z(f)$.
		
		Let $W$ be an irreducible component of $Y \setminus U$ that does not lie in $Z(f)$. We have that $f^{-1}(W)$ consists of $m$ irreducible components $V_1, \dots ,V_m$. If $\dim(V_i)>\dim(W)$, then $f(V_i) \subseteq Z(f)$. So it suffices to consider points on the irreducible components $V_i$ such that $\dim(V_i)=\dim(W)$. For each of these, we can again apply  \cite[Theorem 1]{Silverman2011} to $f|_{V_i}:V_i\to W$. We get that \eqref{equation::height_inequality} holds for all $x\in V_i(\QQ)$ such that $f(x)\in U'$ for some Zariski open dense $U'\subseteq W$. If $W\setminus U'\subseteq Z(f)$ we are done, otherwise we repeat the same argument with $W\setminus U'$ instead of $W$. We only need to do this finitely many times as $\dim (W\setminus U')<\dim (W)$.
\end{proof}

Applying this lemma to the restriction $\Delta|_{\cC \times \cC}: \cC \times \cC \rightarrow S$, we obtain a finite set of $\QQ$-rational points $Z \subset S(\QQ)$ and constants $c_1,c_2>0$ such that
\begin{equation}
\label{equation::heightbound1}
h_L(p) + h_L(p') \leq c_1 h_L(p-p') + c_2
\end{equation}
whenever $p-p' \notin Z$ for points $p,p' \in C(\QQ)$.

\subsection{Enlarging the number field $\KK$}

There exists a finite extension $\KK^\prime/\KK$ such that the following conditions are satisfied:

\begin{enumerate}
	\item all connected algebraic subgroups of $G$ are defined over $\KK^\prime$ (Lemma \ref{lemgalois1}),
	\item we have $p_0 \in G(\KK^\prime)$, $p_i \in G(\KK^\prime)$ ($1\leq i \leq N$), and $Z \subset S(\KK^\prime)$, and
	\item the curve $\cC \subset G$ as well as the curves $C_i \subset G_0$ ($1\leq i \leq N$) in (\ref{equation::Soa}), are defined over $\KK^\prime$.
\end{enumerate}

{Replacing $\KK$ with $\KK^\prime$, we can assume that the above conditions are already satisfied for $\KK$.}

\subsection{Some reductions}

Recall that we have to show that the set $(\cC \cap G^{[2]})(\QQ)$ is finite. For this purpose, we consider a point $p\in (\cC \cap G^{[2]})(\QQ)$. Denote by $H \subset G$ a semiabelian subvariety of codimension at least $2$ and $q \in \mathrm{Tors}(G)$ such that $p \in q + H$. Recall that $G_0 + H=G$ by Lemma \ref{lem::mainproof1}.

Set $p^\prime = p - p_0 \in S(\QQ)$. For each $\sigma \in \Gal(\overline{\mathbb{K}}/\mathbb{K})$, we define
\begin{equation*}
r_{p,\sigma} = (p^\prime)^\sigma - p^\prime = p^\sigma - p \in (G_0 \cap \left(q^\sigma - q + H\right))(\QQ).
\end{equation*}
Considering tangent spaces at $0_G$, we obtain
\begin{equation*}
\dim(G_0)-\dim(G_0 \cap H) = \dim(G_0 + H) - \dim(H) = \dim(G)-\dim(H) \geq 2.
\end{equation*}
As the intersection $G_0 \cap (q^\sigma - q + H)$ is a torsion translate of $G_0 \cap H$ in $G_0$, the point $r_{p,\sigma}$ is contained in $S \cap (G_0)^{[2]}$.

\begin{lem}
\label{lemma::mainproof1}
There are at most finitely many points $p\in (\cC \cap G^{[2]})(\QQ)$ such that $r_{p,\sigma} \in Z$ for all $\sigma \in \Gal(\overline{\KK}/\KK)$.
\end{lem}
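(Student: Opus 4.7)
The plan is to deduce the statement by combining the Galois-theoretic Lemma \ref{lemgalois2}(b) with the finiteness Proposition \ref{lemfink}. The key point is that, after the field enlargement just performed, every element of the finite set $Z$ lies in $S(\KK) \subseteq G(\KK)$. Hence the hypothesis $r_{p,\sigma} \in Z$ for all $\sigma \in \Gal(\overline{\KK}/\KK)$ implies, in particular, that $p^{\sigma} - p \in G(\KK)$ for all $\sigma \in \Gal(\overline{\KK}/\KK)$.

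Given this, I would invoke Lemma \ref{lemgalois2}(b), whose hypotheses match exactly what we just established. That lemma produces an integer $e = e(\KK) \geq 1$, depending only on $\KK$, such that $[e](p) \in G(\KK)$. Crucially, $e$ does not depend on the specific point $p$ under consideration, only on $\KK$, which has been fixed at this stage of the argument.

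Now I would apply Proposition \ref{lemfink} to the curve $\cC \subseteq G$, which by hypothesis is not contained in $G^{[1]}$, with this integer $e$ and the number field $\KK$. The proposition asserts that only finitely many points $p \in (\cC \cap G^{[2]})(\QQ)$ can satisfy $[e](p) \in G(\KK)$. Combined with the previous step, this directly yields finiteness of the set of $p \in (\cC \cap G^{[2]})(\QQ)$ with $r_{p,\sigma} \in Z$ for every $\sigma \in \Gal(\overline{\KK}/\KK)$.

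I expect no serious obstacle: the lemma is essentially a bookkeeping step that feeds the output of Lemma \ref{lemgalois2}(b) into Proposition \ref{lemfink}. The only thing to verify carefully is that the field enlargement performed in the preceding subsection indeed guarantees $Z \subseteq G(\KK)$, which it does by condition~(2) of that enlargement. The rest is a direct invocation of the two earlier results, so the proof will be no more than a few lines.
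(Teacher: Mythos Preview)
Your proposal is correct and follows exactly the same route as the paper's proof: use $Z \subset G(\KK)$ (from condition~(2) of the field enlargement) to verify the hypothesis of Lemma~\ref{lemgalois2}(b), obtain $[e](p) \in G(\KK)$ with $e$ independent of $p$, and then conclude via Proposition~\ref{lemfink}.
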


\begin{proof}
Assume that $p \in (\cC \cap G^{[2]})(\QQ)$ is such that $r_{p,\sigma} = p^\sigma - p \in Z \subset G(\KK)$ for all $\sigma \in \Gal(\overline{\KK}/\KK)$. By Lemma \ref{lemgalois2} (b), there exists an integer $e = e(\KK) \geq 1$, which is independent of $p$, such that $[e](p) \in G(\KK)$. By Proposition \ref{lemfink}, there are at most finitely many $p \in (\cC \cap G^{[2]})(\QQ)$ with this property.
\end{proof}

\begin{lem}
\label{lemma::mainproof2}
There are at most finitely many points $p\in (\cC \cap G^{[2]})(\QQ)$ such that there exists an automorphism $\sigma \in \Gal(\overline{\KK}/\KK)$ with $r_{p,\sigma} \in S^{\mathrm{oa}}(\QQ) \setminus Z$.
\end{lem}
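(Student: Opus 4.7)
The plan is to combine the Bounded Height Theorem (Theorem \ref{BHT}) applied to the surface $S$ with the height comparison inequality \eqref{equation::heightbound1} and then invoke Proposition \ref{lemfinbh}. The decisive inputs are already in place: $S$ is a $2$-dimensional subvariety of the semiabelian variety $G_0$ (Lemma \ref{lemma::surface_nonanomalous}), the points $r_{p,\sigma}$ lie in $S \cap G_0^{[2]}$ by the discussion preceding the lemma, and the height inequality controls $h_L(p)$ in terms of $h_L(r_{p,\sigma})$ whenever $r_{p,\sigma} \notin Z$.

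Concretely, I would first fix a point $p \in (\cC \cap G^{[2]})(\QQ)$ admitting an automorphism $\sigma \in \Gal(\overline{\KK}/\KK)$ with $r_{p,\sigma} = p^\sigma - p \in S^{\mathrm{oa}}(\QQ) \setminus Z$. Applying Theorem \ref{BHT} to the irreducible surface $S \subset G_0$ (endowed with the Weil height attached to the restriction of $L$ to $\overline{G}_0$) yields a uniform constant $B = B(G_0, S, L)$ such that
\begin{equation*}
h_L(r_{p,\sigma}) \leq B
\end{equation*}
for every point in $S^{\mathrm{oa}}(\QQ) \cap G_0^{[2]}(\QQ)$; note that $r_{p,\sigma}$ lies in this set because $\dim S = 2$ and, as shown above, $r_{p,\sigma} \in G_0 \cap (q^\sigma - q + H)$ is contained in a finite union of torsion translates of $G_0 \cap H$, which has codimension at least $2$ in $G_0$.

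Since $\cC$ is $\Gal(\overline{\KK}/\KK)$-invariant, both $p$ and $p^\sigma$ lie in $\cC(\QQ)$, and $h_L(p^\sigma) = h_L(p)$ by Galois invariance of the Weil height. Because $r_{p,\sigma} = p^\sigma - p \notin Z$, inequality \eqref{equation::heightbound1} applied to the pair $(p^\sigma, p) \in (\cC \times \cC)(\QQ)$ gives
\begin{equation*}
2 h_L(p) \;=\; h_L(p^\sigma) + h_L(p) \;\leq\; c_1\, h_L(r_{p,\sigma}) + c_2 \;\leq\; c_1 B + c_2.
\end{equation*}
Hence every point $p$ under consideration has height bounded by a constant depending only on $G$, $\cC$, and $\KK$. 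Proposition \ref{lemfinbh} then asserts that only finitely many such $p \in (\cC \cap G^{[2]})(\QQ)$ can exist, which is the desired conclusion.

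The only non-routine point is the application of Theorem \ref{BHT} inside $G_0$ rather than $G$; this is legitimate because $G_0$ is a semiabelian subvariety of $G$, and $L|_{\overline{G}_0}$ is still ample, so the theorem applies verbatim to the pair $(G_0, S)$. No new geometric input is required beyond what has been assembled in the preceding subsections.
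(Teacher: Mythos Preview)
Your proof is correct and follows essentially the same line as the paper's own argument: apply Theorem \ref{BHT} to $S \subset G_0$ to bound $h_L(r_{p,\sigma})$, feed this into \eqref{equation::heightbound1} to bound $h_L(p)$, and conclude via Proposition \ref{lemfinbh}. The extra remarks you include (Galois invariance of $\cC$ and of the height, and that the Bounded Height Theorem is being applied inside $G_0$) make explicit what the paper leaves implicit, but the substance is identical.
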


\begin{proof}
Let $p \in(\cC \cap G^{[2]})(\QQ)$ and $\sigma \in \Gal(\overline{\KK}/\KK)$ be such that $r_{p,\sigma} \in S^{\mathrm{oa}} (\QQ)\setminus Z$. Recall that we have also $r_{p,\sigma} \in (G_0)^{[2]}(\QQ)$ as noted above. By the (proven) bounded height conjecture for semiabelian varieties (Theorem \ref{BHT}), we conclude that $h_L(r_{p,\sigma}) \leq c_3$ for some positive constant $c_3=c_3(S)$ that is independent of $p$. Using \eqref{equation::heightbound1}, we obtain that $2h_L(p) \leq c_1 c_3 + c_2$. The asserted finiteness follows hence from Proposition \ref{lemfinbh}.
\end{proof}

By these two above lemmas, it suffices to prove that there are at most finitely many points $p \in (\cC \cap G^{[2]})(\QQ)$ such that there exists some $\sigma \in \Gal(\overline{\KK}/\KK)$ satisfying $r_{p,\sigma} \in S(\QQ) \setminus (S^{\mathrm{oa}}(\QQ) \cup Z)$. By Lemma \ref{lemgalois2} (a), the fact that $r_{p,\sigma} \in C_i$ implies that $p_i+H_i$ is a torsion coset; this means that there exists $p_i^\prime\in \mathrm{Tors}(G_0)$ with $p_i+H_i = p_i^\prime+H_i$. (Note that we cannot assume that $p_i^\prime \in C_i(\QQ)$.)

{After relabeling, we may assume that $p_i+H_i$, $1\leq i \leq N$, is a torsion coset if and only if $1\leq i \leq N^\prime$.} For each $1\leq i \leq N^\prime$, we choose a torsion point $p_i^\prime \in \mathrm{Tors}(G_0)$ such that $p_i+H_i=p_i^\prime+H_i$.

\begin{lem}
\label{lemma::mainproof3}
There are at most finitely many points $p \in \cC(\QQ)$ contained in a torsion coset $q+H$ where $q \in \mathrm{Tors}(G)$ and $H \subset G$ is a subgroup of codimension $\geq2$ satisfying $H \not\supseteq H_i$ for all $i \in \{1, \dots, N^\prime\}$.
\end{lem}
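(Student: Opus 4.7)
The strategy is to reduce the problem to a bounded-height one: the Bounded Height Theorem \ref{BHT} will be applied to each curve $C_i - p_i'$ viewed inside the smaller semiabelian variety $H_i$, and Proposition \ref{lemfinbh} will then finish the argument.

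First I would invoke Lemmas \ref{lemma::mainproof1} and \ref{lemma::mainproof2} in order to reduce to the case where, for each point $p$ under consideration, no $\sigma \in \Gal(\overline{\KK}/\KK)$ satisfies $r_{p,\sigma} \in S^{\mathrm{oa}}(\QQ) \setminus Z$ but some $\sigma$ satisfies $r_{p,\sigma} \in C_i \setminus Z$ for some index $i \in \{1,\dots,N'\}$ (the constraint $i \leq N'$ is forced by Lemma \ref{lemgalois2}(a), as already noted in the preceding discussion). For such a triple $(p,\sigma,i)$, $r_{p,\sigma}$ lies simultaneously in the torsion coset $(q^\sigma - q)+H$ of $H$ and in the torsion coset $p_i'+H_i$ of $H_i$, hence in a torsion coset of $H\cap H_i$ inside $G_0$. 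Since the hypothesis of the lemma gives $H\not\supseteq H_i$, we have $H\cap H_i\subsetneq H_i$, so $\mathrm{codim}_{H_i}(H\cap H_i)\geq 1$; translating by $-p_i'$, this yields
\begin{equation*}
r_{p,\sigma}-p_i' \in (C_i-p_i')\cap H_i^{[1]}.
\end{equation*}

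Next I would apply the Bounded Height Theorem \ref{BHT} to $V=C_i-p_i'$ viewed as an irreducible curve in the semiabelian variety $H_i$. By construction $p_i+H_i=p_i'+H_i$ is the minimal coset of $G_0$ containing $C_i$, so $V$ is contained in no proper coset of $H_i$; as the only positive-dimensional subvariety of a curve is the curve itself, this implies $V^{\mathrm{oa}}=V$. Theorem \ref{BHT} then provides a uniform upper bound on $h_L$ over $V\cap H_i^{[1]}$. Because there are only finitely many indices $i\in\{1,\dots,N'\}$, taking the maximum yields an absolute bound on $h_L(r_{p,\sigma}-p_i')$, and hence on $h_L(r_{p,\sigma})$ (the torsion point $p_i'$ contributes only an $O(1)$-term). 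Since $r_{p,\sigma}\notin Z$, the height inequality \eqref{equation::heightbound1} then gives a uniform upper bound on $h_L(p)$, and Proposition \ref{lemfinbh} concludes that only finitely many such $p$ can exist.

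The main point requiring care is the verification that $V^{\mathrm{oa}}=V$ for the curve $V=C_i-p_i'$ in $H_i$: this relies essentially on the choice of $p_i+H_i$ as the minimal coset containing $C_i$, without which the Bounded Height Theorem would not furnish a uniform bound. The remaining manipulations—invariance of the Weil height under translation by a torsion point up to an $O(1)$-term, the finiteness of the index range $\{1,\dots,N'\}$, and the final invocation of Proposition \ref{lemfinbh}—are routine.
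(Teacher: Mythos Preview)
Your proposal is correct and follows essentially the same route as the paper: reduce via Lemmas \ref{lemma::mainproof1} and \ref{lemma::mainproof2} to the case $r_{p,\sigma}\in C_i\setminus Z$ for some $i\le N'$, use $H\not\supseteq H_i$ to place $r_{p,\sigma}-p_i'$ in $(C_i-p_i')\cap H_i^{[1]}$, apply Theorem \ref{BHT} to the curve $C_i-p_i'\subseteq H_i$ (non-anomalous by minimality of $p_i+H_i$), and finish with \eqref{equation::heightbound1} and Proposition \ref{lemfinbh}. Your write-up even makes the verification $V^{\mathrm{oa}}=V$ slightly more explicit than the paper does.
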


\begin{proof}
Let $p \in \cC(\QQ)$ be contained in a torsion coset $q+H$ as in the lemma. Using Lemmas \ref{lemma::mainproof1} and \ref{lemma::mainproof2}, we may assume that there exists some $\sigma \in \Gal(\overline{\KK}/\KK)$ such that $r_{p,\sigma} \in S(\QQ) \setminus (S^{\mathrm{oa}}(\QQ) \cup Z)$. As observed above, we have $r_{p,\sigma} \in C_i(\QQ)$ for some $i \in \{1,\dots, N^\prime \}$.

By the minimality assumption on $H_i$, the translate $C_i^\prime = C_i - p_i^\prime \subseteq H_i$ cannot be $H_i$-anomalous in $C_i^\prime$. As $H \not\supseteq H_i$, the intersection $H \cap H_i$ is  a proper subgroup of $H_i$. Thus we infer $r_{p,\sigma} - p_i^\prime \in (C_i^\prime \cap H_i^{[1]})(\QQ)$ from the fact that $r_{p,\sigma}$ lies in $ (p_i^\prime + H_i) \cap (q^\sigma -q + H)$.

An application of Theorem \ref{BHT} for the curve $C_i^\prime \subseteq H_i$ yields an upper bound on the height $h_L(r_{p,\sigma} - p_i^\prime)$ that is independent of $p$. As $p_i'$ is torsion we have a bound on $h_L(r_{p,\sigma})$, and $r_{p,\sigma}\not\in Z$ implies that the height of $p$ is bounded. Proposition \ref{lemfinbh} yields the asserted finiteness.
\end{proof}

\subsection{Applying the inductive hypothesis}

We conclude the proof of Theorem \ref{mainthm} by dealing with the remaining points in $\cC \cap G^{[2]}$ by induction on $\dim(G)$. Note that this is the only part of the argument where the inductive hypothesis is actually used.

\begin{lem}
\label{lemma::mainproof4}
For each $i \in \{1, \dots, N^\prime\}$, there are at most finitely many points $p \in C(\QQ)$ contained in a torsion coset $q+H$ where $q \in \mathrm{Tors}(G)$ and $H \subset G$ is a subgroup of codimension $\geq2$ satisfying $H \supseteq H_i$.
\end{lem}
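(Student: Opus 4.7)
The plan is to reduce to the inductive hypothesis by quotienting $G$ by the subgroup $H_i$. Let $\pi_i: G \to G/H_i$ be the quotient homomorphism. Since $H_i$ is a connected algebraic subgroup of the semiabelian variety $G$, the quotient $G/H_i$ is itself semiabelian, and moreover $\dim(G/H_i) < \dim(G)$ because $\dim(H_i) \geq 1$ (otherwise $p_i + H_i$ would be a single point, contradicting the fact that $C_i$ is a curve contained in it).

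Next I would verify the two hypotheses needed to apply the inductive assumption to $\pi_i(\cC) \subseteq G/H_i$. First, $\pi_i(\cC)$ is an irreducible curve: if it were a point, $\cC$ would lie in a single coset of $H_i$, forcing $G_0 \subseteq H_i$ by minimality of the coset $p_0 + G_0$; combined with $H_i \subseteq G_0$ and $\codim_{G_0}(H_i) \geq 2$, this gives a contradiction. Second, $\pi_i(\cC)$ is not contained in any proper algebraic subgroup $K' \subsetneq G/H_i$, since otherwise $\pi_i^{-1}(K')$ would be a proper algebraic subgroup of $G$ containing $\cC$, contradicting our standing hypothesis on $\cC$. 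Therefore the inductive hypothesis applies and yields that $\pi_i(\cC) \cap (G/H_i)^{[2]}$ is finite.

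Now fix $p \in \cC(\QQ)$ as in the statement, so that $p \in q + H$ with $q \in \mathrm{Tors}(G)$, $\codim_G(H) \geq 2$, and $H \supseteq H_i$. Applying $\pi_i$ we obtain $\pi_i(p) \in \pi_i(q) + \pi_i(H)$, where $\pi_i(q) \in \mathrm{Tors}(G/H_i)$ and $\pi_i(H) = H/H_i$ has codimension $\codim_G(H) \geq 2$ in $G/H_i$. Hence $\pi_i(p) \in \pi_i(\cC) \cap (G/H_i)^{[2]}$, which is a finite set by the previous step. Finally, since $\pi_i|_\cC: \cC \to \pi_i(\cC)$ is a non-constant morphism of irreducible algebraic curves, each fiber is finite, and so only finitely many $p \in \cC(\QQ)$ can arise.

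The main substantive input is the inductive hypothesis itself; the technical work reduces to verifying the two geometric conditions on $\pi_i(\cC)$, and here the only mildly delicate point is ruling out that $\pi_i(\cC)$ collapses to a point, which is where the minimality of $p_0 + G_0$ containing $\cC$ is used in an essential way.
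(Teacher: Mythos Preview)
Your proof is correct and follows essentially the same approach as the paper: quotient by $H_i$, verify that $\pi_i(\cC)$ is a curve not contained in a proper subgroup of $G/H_i$, and invoke the inductive hypothesis. Your write-up is slightly more explicit (checking $\dim H_i \geq 1$, computing the codimension of $\pi_i(H)$, and noting finiteness of the fibers of $\pi_i|_{\cC}$), but the argument is the same.
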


\begin{proof}
	Again, let $p \in \cC(\QQ)$ be contained in a torsion coset $q+H$ as in the lemma. We consider the quotient map $\varphi_i: G \rightarrow G/H_i$ and note that $\varphi_i(\cC)$ is not contained in a proper algebraic subgroup of $G/H_i$ because otherwise $C$ would be contained in a proper algebraic subgroup of $G$. Moreover, it is not a point. Indeed, this would imply that $\cC$ is contained in a coset of $H_i$ which would be strictly contained in a coset of $G_0$, contradicting the minimality of $G_0$.
	
	Now, as $p$ is sent to $\varphi_i(\cC) \cap (G/H_i)^{[2]}$ and $\dim(G/H_i)<\dim(G)$, the lemma follows from our inductive hypothesis.
\end{proof}

On combining Lemmas \ref{lemma::mainproof3} and \ref{lemma::mainproof4}, we obtain Theorem \ref{mainthm} immediately.

\subsection*{Acknowledgements:} The authors thank the referee for carefully reading the paper and providing several suggestions that significantly improved the article. They moreover thank thank Éric Gaudron and Philipp Habegger for comments and feedback. FB was supported by  the Swiss National Science Foundation grant 165525. LK was supported by an Ambizione Grant of the Swiss National Science Foundation. LK also received funding from the European Union Horizon 2020 research and innovation programme under the Marie Sklodowska-Curie grant agreement No. 101027237.

\bibliography{semiabelian}
\bibliographystyle{amsalpha}

\end{document}